\newtheorem{theorem}{Theorem}[section]
\newtheorem{lemma}[theorem]{Lemma}
\newtheorem{proposition}[theorem]{Proposition}
\newtheorem{corollary}[theorem]{Corollary}
\newtheorem{claim}{Claim}[theorem]
\theoremstyle{definition}
\theoremstyle{remark}
\numberwithin{equation}{section}
  \DeclareMathOperator{\Span}{{\rm span}}
  \DeclareMathOperator{\GL}{{\rm GL}} \DeclareMathOperator{\SL}{{\rm
      SL}} 
  \DeclareMathOperator{\End}{{\rm End}}
  \DeclareMathOperator{\norL}{{\rm N}_L}
  \DeclareMathOperator{\noroneL}{{\rm N}^1_L}
  \DeclareMathOperator{\cen}{{\rm Z}_G} 
  \DeclareMathOperator{\supp}{{\rm supp}} 
  \DeclareMathOperator{\Stab}{{\rm Stab}} 
  \DeclareMathOperator{\Ad}{{\rm Ad}} 
  \DeclareMathOperator{\diag}{\rm{diag}} 
  \DeclareMathOperator{\Lie}{{\rm Lie}} 
  \DeclareMathOperator{\Cc}{{\rm C}_{{\rm c}}} 
   \renewcommand{\phi}{\varphi}
 \newcommand{\vect}[1]{{\boldsymbol{#1}}}
  \newcommand{\vx}{\vect{x}} \newcommand{\vy}{\vect{y}}
  \newcommand{\vxi}{\vect{\xi}}
  \newcommand{\vpsi}{\vect{\psi}}
  \newcommand{\vzeta}{\vect{\zeta}}
  \newcommand{\veta}{\vect{\eta}}
  \newcommand{\vlambda}{\vect{\lambda}}
  \newcommand{\vzero}{\vect{0}}
  \newcommand{\field}[1]{\mathbb{#1}} 
  \newcommand{\R}{\field{R}} \newcommand{\N}{\field{N}}
  \newcommand{\Q}{\field{Q}} \newcommand{\Z}{\field{Z}}
  \providecommand{\abs}[1]{\lvert#1\rvert}
  \providecommand{\Abs}[1]{\left\lvert #1 \right\rvert}
  \providecommand{\norm}[1]{\lVert#1\rVert}
  \providecommand{\trn}[1]{{\,^{\bf t}\!#1}}
  \renewcommand{\setminus}{\smallsetminus}
  \newcommand{\inv}{^{-1}}
  \newcommand{\tcup}[1]{\textstyle{\bigcup_{#1}}\,}
  \newcommand{\toinfty}{\stackrel{i\to\infty}{\longrightarrow}}
  \newcommand{\la}[1]{\mathfrak{\lowercase{#1}}}
  \newcommand{\cA}{\mathcal{A}} \newcommand{\sA}{\mathscr{A}}
  \newcommand{\cB}{\mathcal{B}} 
   \newcommand{\cC}{\mathcal{C}}
  \newcommand{\cD}{\mathcal{D}}
  \newcommand{\cE}{\mathcal{E}}
  \newcommand{\cF}{\mathcal{F}} \newcommand{\sF}{\mathscr{F}}
   \newcommand{\sH}{\mathscr{H}}
  \newcommand{\cK}{\mathcal{K}}
  \newcommand{\cN}{\mathcal{N}}
  \newcommand{\cO}{\mathcal{O}}
\newcommand{\lml}{{L/\Lambda}}
\newcommand{\dth}{D.Th.}
\newcommand{\A}{{(A)}}   
\newcommand{\B}{{(B)}}
\newcommand{\Amu}{{(A$\mu$)}}
\newcommand{\Bmu}{{(B$\mu$)}}
\begin{document}

\title[Equidistribution and Dirichlet's theorem]{Equidistribution of
  expanding translates of curves and Dirichlet's theorem on
  Diophantine approximation}

\author{Nimish A. Shah}

\address{Tata Institute of Fundamental Research, Mumbai 400005,
  India}

\email{nimish@math.tifr.res.in}

\subjclass[2000]{22E40, 11J83}

\begin{abstract}
  We show that for almost all points on any analytic curve on $\mathbb{R}^k$
  which is not contained in a proper affine subspace, the Dirichlet's
  theorem on simultaneous approximation, as well as its dual result
  for simultaneous approximation of linear forms, cannot be
  improved. The result is obtained by proving asymptotic
  equidistribution of evolution of a curve on a strongly unstable leaf
  under certain partially hyperbolic flow on the space of unimodular
  lattices in $\mathbb{R}^{k+1}$. The proof involves ergodic properties of
  unipotent flows on homogeneous spaces.

  \keywords{Flow on homogeneous spaces -- Dirichlet's theorem --
    Diophantine approximation -- unipotent flows -- Ratner's theorem}

\end{abstract}

\maketitle

\section{Introduction}
The Dirichlet's theorem on simultaneous approximation of any $k$ real
numbers $\xi_1,\dots,\xi_k$ says the following:
  \begin{itemize}
  \item[{\A}] For any positive integer $N$ there exist integers
  $q_1,\dots,q_k,p$ such that
    \begin{equation*}
       \abs{q_1\xi_1+\dots+q_k\xi_k-p}\leq N^{-k} \quad \text{and}\quad
       0<\max_{1\leq i\leq k}\abs{q_i}\leq N;
    \end{equation*}
  \item[{\B}] For any positive integer $N$ there exist integers
    $q,p_1,\dots,p_k$ such that
      \begin{equation*}
        \abs{q\xi_i-p_i}\leq N^{-1}
        \quad\text{and}\quad \max_{1\leq i\leq k}\abs{q}\leq N^k.
      \end{equation*}
    \end{itemize}
    
    After \cite{Davenport+Schmidt:Dirichlet}, we say that the \dth\
    {\A} (respectively, {\B}) cannot be improved for
    $\vxi=(\xi_1,\dots,\xi_k)\in\R^k$ if for any $0<\mu<1$ the
    following holds:
\begin{itemize}  
\item[{\Amu}] There are infinitely many positive integers $N$ for
  which the pair of inequalities 
\begin{equation*}
      \abs{(q_1\xi_1+\dots+q_k\xi_k)-p}\leq \mu N^{-k} \quad \text{and} \quad
      0<\max_{1\leq i\leq k} \abs{q_i}\leq \mu N
    \end{equation*} 
    are insoluble in integers $q_1,\dots,q_k,p$ (respectively,
  \item[{\Bmu}] there are infinitely many positive integers $N$ for
    which the pair of inequalities 
  \begin{equation*}
    \abs{q\xi_i-p_1}\leq \mu N^{-1}
        \quad\text{and}\quad 
        0<\abs{q}\leq \mu N^k
  \end{equation*}
  are insoluble in integers $q,p_1,\dots,p_k$).
\end{itemize}

In \cite{Davenport+Schmidt:Dirichlet}, Davenport and Schmidt proved
that {\dth}\ {\A} and {\B} cannot be improved for almost all
$\vxi\in\R^k$.

One says that {\dth}\ {\A} (respectively, {\B}) cannot be {\em
  $\mu$-improved\/} for $\xi\in\R^k$ if {\Amu} (respectively, {\Bmu})
holds. In \cite{DS:curve} Davenport and Schmidt showed that {\dth}\
{\A} cannot be $(1/4)$-improved for the pair $(\xi,\xi^2)$ for almost
all $\xi\in\R$. This result was generalized by
Baker~\cite{Baker:curves} for almost all points on `smooth curves' in
$\R^2$, by Dodson, Rynne and Vickers~\cite{Dodson:manifolds} for
almost all points on `higher dimensional curved submanifolds' of
$\R^k$, and by Bugeaud~\cite{Bugeaud:poly} for almost all points on
the curve $(\xi,\xi^2,\dots,\xi^k)$; in each case {\Amu} holds for
some small value of $\mu<1$ depending on the curvature of the smooth
submanifold. Their proofs typically involve the technique of regular
system introduced in \cite{Davenport+Schmidt:Dirichlet}.

Recently the problem was recast in the language of flows on
homogeneous spaces by Kleinbock and
Weiss~\cite{Kleinbock+Weiss:Dirichlet} using observations due to
Dani~\cite{Dani:div}, as well as Kleinbock and
Margulis~\cite{Klein+Mar:Annals98}. In
\cite{Kleinbock+Weiss:Dirichlet} it was shown that {\dth}\ (A) and (B),
as well as its various generalizations, cannot be $\mu$-improved for
almost all points on any {\em non-degenerate curve\/} on $\R^k$ for
some small $\mu<1$ depending on the curve. In this article, we shall
strengthen such results for all $0<\mu<1$:

  \begin{theorem}
    \label{thm:Dirichlet}
    Let $\phi:[a,b]\to \R^{k}$ be an analytic curve such that its
    image is not contained in a proper affine subspace. Then
    Dirichlet's theorem (A) and (B) cannot be improved for $\phi(s)$
    for almost all $s\in [a,b]$.
  \end{theorem}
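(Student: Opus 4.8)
The plan is to pass, via the Dani correspondence, to a question about expanding translates of the curve on the space of unimodular lattices, and then to settle that by proving equidistribution with respect to Haar measure. Let $G=\SL_{k+1}(\R)$, $\Gamma=\SL_{k+1}(\Z)$, $X=G/\Gamma$, with Haar probability measure $\mu_X$; regard $X$ as the space of unimodular lattices in $\R^{k+1}$. For case {\A} I will use
\[
  u(\vx)=\psmat{I_k & 0\\ \trn{\vx} & 1},\qquad
  a_t=\diag(e^{-t},\dots,e^{-t},e^{kt}),
\]
so that $a_tu(\vx)a_{-t}=u(e^{(k+1)t}\vx)$ and $\{u(\vx):\vx\in\R^k\}$ is the expanding horospherical subgroup of $a_t$; for case {\B} I will use instead the transposed pair $u'(\vx)=\psmat{I_k & \vx\\ 0 & 1}$ and $a'_t=\diag(e^{t},\dots,e^{t},e^{-kt})$. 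Let $\delta(x)$ be the sup-norm of a shortest nonzero vector of a lattice $x\in X$. A short computation with the Dani correspondence shows that, for a positive integer $N$, the pair of inequalities in {\Amu} is insoluble exactly when $\delta(a_{\log N}u(\vxi)\Gamma)>\mu$, and similarly for {\Bmu}; since $t\mapsto\delta(a_tx)$ is continuous, the gaps in $\{\log N:N\in\Z_{>0}\}$ tend to $0$, and $\delta\le1$ everywhere by Minkowski, it follows that {\dth} {\A} (resp.\ {\B}) cannot be improved for $\vxi$ if and only if $\limsup_{t\to\infty}\delta(a_tu(\vxi)\Gamma)=1$ (resp.\ the same with $u',a'_t$). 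So it is enough to prove: for every $0<\mu<1$ and almost every $s\in[a,b]$, the trajectory $t\mapsto a_tu(\phi(s))\Gamma$ is \emph{not} eventually contained in $A_\mu:=\{x\in X:\delta(x)<\mu\}$, together with the analogous assertion for case {\B}.

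\textbf{Step 2: the equidistribution input, and how the theorem follows.} The heart of the argument is the following equidistribution statement, established in Step~3: for every subinterval $I\subseteq[a,b]$ and every $f\in\Cc(X)$,
\begin{equation}\label{eq:equidist}
  \frac{1}{\abs{I}}\int_I f\bigl(a_tu(\phi(s))\Gamma\bigr)\,\dd s\ \longrightarrow\ \int_X f\,\dd\mu_X\qquad(t\to\infty),
\end{equation}
and likewise with $(u',a'_t)$ in place of $(u,a_t)$. (By the identity theorem the restriction of $\phi$ to any subinterval still has image affinely spanning $\R^k$, so \eqref{eq:equidist} is an assertion of the same type for each $I$.) Granting \eqref{eq:equidist}, fix $\mu<1$ and suppose, for a contradiction, that the set of $s$ whose trajectory is eventually in $A_\mu$ has positive measure; then for some $n_0$ the set $E=\{s\in[a,b]:a_tu(\phi(s))\Gamma\in A_\mu\text{ for all }t\ge n_0\}$ has positive measure. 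I would take a Lebesgue density point $s_0$ of $E$, put $I_\rho=[s_0-\rho,s_0+\rho]$, and choose $f\in\Cc(X)$ with $0\le f\le1$, $\supp f\subseteq\{x:\delta(x)>\mu\}$ (a nonempty open set, since $\mu<1$) and $\int_Xf\,\dd\mu_X=c>0$. For $t\ge n_0$ the integrand in \eqref{eq:equidist} for $I_\rho$ vanishes on $I_\rho\cap E$, so its left-hand side is at most $\abs{I_\rho\setminus E}/\abs{I_\rho}$; letting $t\to\infty$ gives $c\le\abs{I_\rho\setminus E}/\abs{I_\rho}$, and then $\rho\to0$ forces $c\le0$, a contradiction. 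Hence for every $\mu<1$, almost every $s$ has $a_tu(\phi(s))\Gamma\notin A_\mu$ for arbitrarily large $t$, i.e.\ $\limsup_{t\to\infty}\delta(a_tu(\phi(s))\Gamma)\ge\mu$; intersecting over $\mu=1-\tfrac1n$ gives $\limsup_{t\to\infty}\delta(a_tu(\phi(s))\Gamma)=1$ for a.e.\ $s$, and likewise for case {\B}. Intersecting the two full-measure sets yields the theorem.

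\textbf{Step 3: proving the equidistribution \eqref{eq:equidist}.} Let $\lambda_t$ be the push-forward of normalized Lebesgue measure on $I$ under $s\mapsto a_tu(\phi(s))\Gamma$. Since $\phi$ is analytic with image affinely spanning $\R^k$, the quantitative non-divergence criterion of Kleinbock and Margulis applies uniformly in $t$ to the maps $s\mapsto a_tu(\phi(s))$, so $\{\lambda_t:t>0\}$ is tight; it therefore suffices to show that every weak-$*$ limit $\mu=\lim_i\lambda_{t_i}$ equals $\mu_X$. Because $a_t$ expands the abelian subgroup $\{u(\vx)\}$ at rate $e^{(k+1)t}$, a parameter window of length $\asymp e^{-(k+1)t_i}$ about a point $s_0$ is carried, after this rescaling, onto a segment of the orbit of the one-parameter unipotent subgroup $\{u(r\dot\phi(s_0)):r\in\R\}$ through $a_{t_i}u(\phi(s_0))\Gamma$; combined with the tightness and the polynomial divergence of unipotent orbits, this forces $\mu$ to be invariant under a nontrivial one-parameter unipotent subgroup of $G$. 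By Ratner's measure classification theorem, then, either $\mu=\mu_X$, or $\mu$ gives positive mass to a homogeneous measure carried by a closed orbit $Hx$ of some proper closed subgroup $H\le G$ generated by unipotent one-parameter subgroups. In the latter case the Dani--Margulis linearization, applied to the singular sets attached to $H$, would force the translated curves $s\mapsto a_{t_i}u(\phi(s))$, on a positive-measure set of parameters $s$, to concentrate near a translate of a proper algebraic subset determined by a $\Gamma$-rational subspace; analyzing this concentration through the polynomial behaviour of the trajectory, and invoking analyticity (the identity theorem, to promote a relation holding on a positive-measure set of parameters to one holding identically on $[a,b]$), one would conclude that $\phi([a,b])$ lies in a proper affine subspace --- contrary to hypothesis. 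Hence $\mu=\mu_X$. The case $(u',a'_t)$ is entirely parallel; in fact it is carried to the case just treated by the automorphism $g\mapsto\trn{g}^{-1}$ of $G$, which preserves $\Gamma$ and replaces $\phi$ by $-\phi$.

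\textbf{The main obstacle.} All the substance is in Step~3. Making the passage ``stretching $\Rightarrow$ unipotent invariance'' rigorous calls for uniform control of the growth of the relevant $\Ad$-representations along the curve together with uniform quantitative non-divergence; but the genuinely delicate point --- and the reason this approach delivers \emph{every} $\mu<1$, rather than only some small $\mu$ as in earlier work --- is the exclusion of \emph{all} intermediate subgroups $H$. That is exactly where analyticity, as opposed to mere smoothness, is indispensable: it converts an apparently accidental affine relation that holds only on a positive-measure set of parameters into a genuine affine degeneracy of the entire curve, which the hypothesis rules out.
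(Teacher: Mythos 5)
Your Steps 1 and 2 (the Dani correspondence and the deduction of the Diophantine statement from equidistribution) are correct and match the paper's strategy, except that you treat cases {\A} and {\B} separately while the paper couples them through $L=G\times G$ so as to get the stronger Theorem~\ref{thm:Dirichlet-2}; for Theorem~\ref{thm:Dirichlet} alone your separate treatment is perfectly adequate. The architecture of Step~3 (non-divergence, a unipotent-invariance argument, Ratner, linearization) is also the paper's. But Step~3 as written is a restatement of what needs to be proved, and it slides over precisely the two points that are the whole technical content of the paper.

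First, the passage ``stretching $\Rightarrow$ $\mu$ is invariant under a nontrivial one-parameter unipotent subgroup'' does not go through as you state it: the local unipotent direction $\dot\phi(s_0)$ \emph{varies} with $s_0$, so averaging the rescaled windows over $s_0$ does not produce a measure invariant under any single one-parameter unipotent group. The paper explicitly flags this obstruction and repairs it by twisting the curve — replacing $u(\phi(s))$ by $z(s)u(\phi(s))$ for an analytic $z:I\to\cen(A)$ with $z(s)\cdot\dot\phi(s)=w_0$ fixed — and then proving invariance of the twisted limit measure under the fixed $W=\{u(sw_0)\}$ (Proposition~\ref{prop:invariant}), later undoing the twist. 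Second, and more seriously, your linearization endgame (``the concentration would force $\phi([a,b])$ into a proper affine subspace; invoke analyticity'') is not an argument: what actually has to be shown is that if a nonzero vector $v$ in a representation $V$ of $G$ satisfies $u(\phi(s))v\in V^0+V^-$ for all $s$, then $v$ is $G$-fixed. That is the content of the paper's ``Basic Lemma'' (Proposition~\ref{prop:main}) and Corollary~\ref{cor:curve-main}, a nontrivial piece of $\la{sl}_2$-representation theory inside $\SL(n,\R)$, developed in \S\ref{sec:basic}. Without some substitute for it, neither the non-divergence argument (Theorem~\ref{thm:return}) nor the exclusion of intermediate $H$ in the Ratner/linearization step (Theorem~\ref{thm:equi}) closes. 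You correctly identify \emph{where} the difficulty lies, but the proposal supplies no mechanism to resolve it.
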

  
  This result will be deduced from a result about limiting
  distributions of certain expanding sequence of curves on the space
  of lattices in $\R^{k+1}$. A refinement of
  Theorem~\ref{thm:Dirichlet} is obtained in
  Theorem~\ref{thm:Dirichlet-2}. 

  \subsubsection{Notation} Let $G=\SL(n,\R)$, $n\geq 2$. For $t\in\R$ and
  $\vxi=(\xi_1,\dots,\xi_{n-1})\in \R^{n-1}$, define 
\begin{gather}
   \label{eq:50} 
   a_t={\left[{\begin{smallmatrix} e^{(n-1)t}\\ & e^{-t}\\ & & \ddots \\ &
         & & e^{-t}
       \end{smallmatrix}} 
    \right]} 
\quad \text{and} \quad 
a'_t={\left[\begin{smallmatrix} e^t\\ &\ddots \\ & & e^t \\ &&& e^{-(n-1)t}
       \end{smallmatrix} \right]} \\
u(\vxi)={\left[\begin{smallmatrix} 1 & \xi_1&\dots&\xi_{n-1}\\ & 1 \\ & &
    \ddots \\ &&& 1
  \end{smallmatrix}
\right]}
\quad \text{and} \quad 
u'(\vxi)={\left[\begin{smallmatrix} 1 &      &      &\xi_{n-1}\\ 
                               &\ddots&      &\vdots \\ 
                               &      & 1    &\xi_{1}\\ 
                               &      &      & 1
               \end{smallmatrix}
\right]}.
\end{gather}

The main goal of this article is to prove the following:

\begin{theorem}
\label{thm:main}  
Let $\phi:[a,b]\to \R^{n-1}$ be an analytic curve whose image is not
contained in a proper affine subspace. Let $\Gamma$ be a
lattice in $G$. The for any $x_0\in G/\Gamma$ and any bounded
continuous function $f$ on $G/\Gamma$,
    \begin{equation}
      \label{eq:9}
      \lim_{t\to\infty} \frac{1}{\abs{b-a}}\int_a^b
      f({a_t}u(\phi(s))x_0)\,ds=\int_{G/\Gamma} f\,d\mu_G, 
    \end{equation}
    where $\mu_G$ is the $G$-invariant probability measure on
    $G/\Gamma$. Similarly,
\begin{equation}
      \label{eq:9b}
      \lim_{t\to\infty} \frac{1}{\abs{b-a}}\int_a^b
      f(a'_tu'(\phi(s))x_0)\,ds=\int_{G/\Gamma} f\,d\mu_G.
    \end{equation}
  \end{theorem}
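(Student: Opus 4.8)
The plan is to deduce equidistribution from Ratner's classification of ergodic invariant measures for unipotent flows, combined with the non-divergence (mass non-escape) estimates of Kleinbock and Margulis and a linear-algebraic argument that rules out intermediate invariant subvarieties. Fix a weak-$*$ limit measure $\mu$ of the normalized pushforward measures $\nu_t := \frac{1}{|b-a|}\int_a^b \delta_{a_t u(\phi(s)) x_0}\, ds$ as $t\to\infty$ along some subsequence; the goal is to show $\mu = \mu_G$, which forces the full limit in \eqref{eq:9} to exist and equal $\mu_G$. The first step is to show $\mu$ is a probability measure, i.e. no mass escapes to infinity. This is where I would invoke the $(C,\alpha)$-good / quantitative non-divergence machinery: because $\phi$ is analytic and its image spans $\R^{n-1}$ affinely, the curve is ``non-degenerate'' in the sense required, and the polynomial-like behaviour of $s\mapsto a_t u(\phi(s))$ acting on exterior powers of $\R^n$ gives a uniform bound $\nu_t(\{\text{lattices with a short vector}\}) \leq \varepsilon$ independent of $t$; hence $\mu(G/\Gamma)=1$.

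The second and central step is to show $\mu$ is invariant under a nontrivial unipotent subgroup. The key observation is that the expanding flow $a_t$ conjugates the unipotent upper-triangular group $U = \{u(\vxi)\}$ expandingly: $a_t u(\vxi) a_t^{-1} = u(e^{nt}\vxi)$. Thus for a fixed $\vv \in \R^{n-1}$ and the one-parameter unipotent subgroup $u(r\vv)$, one has $u(r\vv) a_t u(\phi(s)) = a_t u(e^{-nt} r \vv + \phi(s))$, and since the integral is over an interval of $s$, a change of variables argument (reparametrizing $s$ near each point, using that $\phi$ has nonvanishing derivative somewhere after passing to a spanning sub-collection of derivatives) shows that translating $\nu_t$ by $u(r\vv)$ changes it by an amount tending to $0$. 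More carefully, one shows $\mu$ is invariant under the full horospherical group $U^-$ expanded by $a_t$ is not quite right — rather, one shows invariance under at least a one-parameter unipotent subgroup by exploiting that the pushed curve, after renormalization, becomes longer and longer in the $U$-direction so that its limit is $U$-translation invariant along the relevant directions. So $\mu$ is invariant and ergodic-component-decomposable under a unipotent subgroup.

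The third step applies Ratner's measure classification: almost every ergodic component of $\mu$ under the unipotent action is the $H$-invariant homogeneous measure on a closed orbit $Hx\Gamma/\Gamma$ for some closed connected subgroup $H \leq G$ containing that unipotent subgroup, with $Hx\Gamma$ closed and $H\cap x\Gamma x^{-1}$ a lattice in $H$. The final and hardest step is to rule out all proper such $H$, i.e. to show the ``linearization'' of the curve cannot be trapped in a finite union of proper homogeneous subvarieties. This is where the spanning hypothesis on $\phi$ is essential and where I expect the main difficulty: using the description of the singular set (the set of lattices lying in some proper $H$-orbit) via representations $G \to \GL(V)$ and associated algebraic subvarieties, one must show that the analytic curve $s\mapsto a_t u(\phi(s)) x_0$ spends asymptotically zero proportion of time near any such subvariety. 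The argument is: if a positive proportion of mass of $\mu$ sat inside one proper subvariety, then by analyticity the entire curve $u(\phi(s))x_0$ (for $s$ in a subinterval) would, after applying $a_t$ and taking limits, have to be swept into that subvariety, and unwinding the expanding $a_t$-dynamics this forces a polynomial identity on $\phi$ contradicting the fact that $1,\phi_1,\dots,\phi_{n-1}$ are linearly independent as functions (equivalently, the image spans no proper affine subspace). Combining: every ergodic component is $\mu_G$, so $\mu=\mu_G$. The statement \eqref{eq:9b} follows by the symmetric argument, replacing $a_t, u$ by $a'_t, u'$ and $U$ by the corresponding unipotent subgroup, or formally by applying the transpose-inverse automorphism of $G$; the same non-degeneracy of $\phi$ drives both cases. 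Finally, \eqref{eq:9} for bounded continuous (not just compactly supported) $f$ follows from the already-established non-escape of mass together with the weak-$*$ convergence on $\Cc(G/\Gamma)$.
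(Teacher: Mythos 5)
Your outline follows the broad template of the paper (non-divergence, unipotent invariance of a weak-$*$ limit, Ratner's theorem, linearization to rule out intermediate $H$), but step two contains a genuine gap that the paper explicitly identifies as the central difficulty and then resolves by a device you have not supplied. You write that since $u(r\vv)\,a_t u(\phi(s)) = a_t u(e^{-nt}r\vv + \phi(s))$, a reparametrization in $s$ shows the limit is $u(r\vv)$-invariant. But for the displacement $e^{-nt}r\vv$ to be absorbed by moving $s$ to a nearby $s'$, the vector $\vv$ must point along $\dot\phi(s)$ — and this direction changes as $s$ runs over $[a,b]$. There is no single $\vv$ for which the whole pushed curve looks like a long $u(r\vv)$-trajectory; near each $s_0$ it looks like a long $u(r\dot\phi(s_0))$-trajectory, but the limit measure $\mu$ aggregates over all $s_0$ and is not visibly invariant under any one of these. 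Your parenthetical (``after passing to a spanning sub-collection of derivatives'') does not repair this, because invariance under $u(r\dot\phi(s_0))$ for each fixed $s_0$ is exactly what fails; you would only get it if you localized the measure near that $s_0$, at which point you no longer control the global limit.

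The paper resolves this with a twist: one chooses an analytic $z:I\to\cen(A)\cong A\cdot\SL(n-1,\R)$ with $z(s)\cdot\dot\phi(s)=w_0$ for a fixed $w_0$, replaces the curve $u(\phi(s))x_0$ by $z(s)u(\phi(s))x_0$, and shows the limit $\lambda$ of the twisted translates is invariant under the \emph{fixed} unipotent one-parameter group $W=\{u(rw_0)\}$ (Proposition~\ref{prop:invariant}). After applying Ratner, linearization, and the Basic Lemma (Proposition~\ref{prop:main}, Corollaries~\ref{cor:curve-main} and~\ref{cor:expand}) to force $H=G$ for $\lambda$, the paper undoes the twist by noting that $z(s)\in\cen(A)$ and $\lambda=\mu_G$ is $\cen(A)$-invariant, hence the original limit agrees with the twisted one (proof of Corollary~\ref{cor:mu}). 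Without this (or an equivalent) your argument stalls before Ratner's theorem can even be invoked. A secondary point: for a general lattice $\Gamma$ (not $\SL(n,\Z)$), non-divergence is not a direct citation of Kleinbock--Margulis; the paper needs its algebraic Basic Lemma to rule out the vector $v$ with $u(\phi(s))v\in V^-$ for all $s$ (Theorem~\ref{thm:return} and Proposition~\ref{prop:return}), and this is a genuinely additional input when $\Gamma$ is arbitrary.
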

 
  This result corroborates \cite[\S4.3]{Goro:AIM},
  \cite[\S4.1]{Kleinbock+Weiss:Dirichlet}, and answers
  \cite[Question~6.1]{Shah:son1}. In fact, we will prove the following
  more general statement.

\begin{theorem}
  \label{thm:action}
  Let $L$ be a Lie group and $\Lambda$ a lattice in $L$. Let
  $\rho:G\to L$ be a continuous homomorphism. Let $x_0\in L/\Lambda$
  and $H$ be a minimal closed subgroup of $L$ containing $\rho(G)$
  such that the orbit $Hx_0$ is closed, and admits a finite
  $H$-invariant measure, say $\mu_H$. Then for any bounded continuous
  function $f$ on $L/\Lambda$ the following holds:
    \begin{equation}
      \label{eq:9L}
      \lim_{t\to\infty} \frac{1}{\abs{b-a}}\int_a^b
      f(\rho(a_{t}u(\phi(s)))x_0)\,ds=\int_{Hx_0} f\,d\mu_H. 
    \end{equation}
\end{theorem}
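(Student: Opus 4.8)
My plan is to follow the standard paradigm for such equidistribution statements — non-divergence, Ratner's measure-classification theorem, and the Dani--Margulis linearization technique — with the analyticity of $\phi$ (equivalently, its non-degeneracy to all orders) entering decisively at the end. First I would reduce to the essential case. Since $G=\SL(n,\R)$ is generated by unipotent one-parameter subgroups, $\rho(G)$ lies in the subgroup of $L$ generated by images of such subgroups; hence Ratner's orbit-closure theorem gives $\overline{\rho(G)x_0}=Hx_0$ with $H$ as in the statement, and restricting to the finite-volume homogeneous space $Hx_0$ I may assume $L=H$ and $\overline{\rho(G)x_0}=L/\Lambda$, so that the assertion becomes: the probability measures $\mu_t\dfn$ (pushforward of $\tfrac{1}{|b-a|}\,ds$ under $s\mapsto\rho(a_tu(\phi(s)))x_0$) converge weak-$*$ to the $L$-invariant probability measure $\mu_L$. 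The statement for $a'_t,u'(\cdot)$ will follow by conjugating $\rho$ by a suitable permutation matrix and applying transpose-inverse, so I treat only $a_t,u(\cdot)$. Next I would apply the Dani--Margulis non-divergence estimates: since $\phi$ is analytic and non-degenerate, the functions $s\mapsto\|\rho(a_tu(\phi(s)))v\|$ for $v$ in the relevant discrete orbits of finite-dimensional representations are $(C,\alpha)$-good uniformly in $t$, so no mass escapes to infinity and every weak-$*$ limit $\mu$ of a sequence $\mu_{t_i}$, $t_i\to\infty$, is a probability measure on $L/\Lambda$.

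The technical heart is the self-similarity of the family $\{\mu_t\}$. The subgroup $U^+\dfn\{u(\vxi):\vxi\in\R^{n-1}\}$ is abelian and is precisely the expanding horospherical subgroup of $a_t$, with $a_tu(\vxi)a_t^{-1}=u(e^{nt}\vxi)$; hence for all $s,h$
\[
\rho(a_tu(\phi(s+h)))x_0=\rho\!\left(u\big(e^{nt}(\phi(s+h)-\phi(s))\big)\right)\rho(a_tu(\phi(s)))x_0,
\]
and after the substitution $h=e^{-nt}\tau$ and a Taylor expansion one gets $e^{nt}(\phi(s+h)-\phi(s))=\tau\,\phi'(s)+O(e^{-nt})$, uniformly for $\tau$ in bounded sets and $s$ in a subinterval on which $\phi'\neq0$ (such a subinterval exists by analyticity). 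Thus on each window of $s$-length $e^{-nt}$ about a base point $s_0$, the renormalized restriction of $\mu_t$ shadows, with error $o(1)$, an orbit segment of the unipotent flow $\tau\mapsto\rho(u(\tau\phi'(s_0)))(\cdot)$. Partitioning $[a,b]$ into such windows, passing to a common subsequential limit, and using that no mass escapes, I would express the limit $\mu$ as an average over base points of probability measures, each invariant under the nontrivial unipotent one-parameter subgroup $\rho(u(\R\,\phi'(s_0)))$ of $L$.

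At this point Ratner's measure-classification theorem applies: each averaged component decomposes into homogeneous probability measures, each supported on a closed orbit $Fx$ of a closed subgroup $F$ of $L$ containing a conjugate of some $\rho(u(\R\phi'(s_0)))$. If some component were not $\mu_L$, the Dani--Margulis linearization technique would detect this inside a finite-dimensional $L$-representation $V$: there would be a point and a vector $v\in V$ — a $p$-vector cutting out the Lie algebra of a conjugate of a proper such $F$ — with $s\mapsto\rho(a_tu(\phi(s)))v$ confined, on a subinterval and for all large $t$, to a bounded neighbourhood of a proper $\rho(U^+)$-invariant subspace of $V$. But the coordinates of $u(\phi(s))v$ are polynomials in $\phi_1(s),\dots,\phi_{n-1}(s)$, and separating them by their $a_t$-weights shows such confinement forces a nontrivial polynomial relation among the $\phi_i$; by analyticity it would hold on all of $[a,b]$, placing the image of $\phi$ in a proper affine subspace, contrary to hypothesis. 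Hence every component equals $\mu_L$, so $\mu=\mu_L$; since this holds for every subsequential limit and the space of probability measures on $L/\Lambda$ is weak-$*$ compact, $\mu_t\to\mu_L$, which is \eqref{eq:9L}.

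The main obstacle is the linearization step: selecting the correct representations — exterior powers of the standard representation of $\SL(n,\R)$, with $a_t$-highest-weight vectors — and controlling how the polynomial map $\phi\mapsto$ (coordinates of $u(\phi)v$) interacts with the $a_t$-weight decomposition, so that ``confinement by a proper subgroup'' is genuinely converted into degeneracy of the curve. This is exactly where the all-order non-degeneracy supplied by analyticity is indispensable, and why finite-order non-degeneracy yields only the weaker $\mu$-improvement results of \cite{Kleinbock+Weiss:Dirichlet} and earlier work. A secondary, more routine difficulty will be upgrading the window-wise shadowing of the second step into a clean structural statement about the single limit measure $\mu$, which requires uniformity in the Taylor estimates and care in interchanging the limits in $t$ and in the partition mesh.
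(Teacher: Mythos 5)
Your overall roadmap — reduce to $L=H$, nondivergence via $(C,\alpha)$-good estimates, unipotent invariance of the limit, Ratner plus Dani--Margulis linearization, then exploit non-degeneracy of $\phi$ — matches the paper's architecture. But at the two points where the real work lies, you diverge from the paper, and one of those departures is a genuine gap.

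First, on obtaining unipotent invariance: you propose to partition $[a,b]$ into windows of length $e^{-nt}$ around basepoints $s_0$, shadow each by a segment of the flow $u(\R\,\phi'(s_0))$, and in the limit express $\mu$ as an average over $s_0$ of measures each invariant under $u(\R\,\phi'(s_0))$. The paper explicitly flags the difficulty that the direction $\phi'(s_0)$ \emph{varies}, and resolves it differently: it twists the curve by an analytic $z(s)\in\cen(A)$ chosen so that $z(s)u(\dot\phi(s))z(s)^{-1}=u(w_0)$ for a fixed $w_0$, studies the translated \emph{twisted} curve $z(s)a_tu(\phi(s))x_0$, and shows its single subsequential limit $\lambda$ is invariant under the fixed one-parameter group $W=\{u(rw_0)\}$ (Proposition~\ref{prop:invariant}). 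Ratner and linearization are then applied \emph{once}, to $\lambda$, and the original limit $\mu$ is recovered by a routine continuity argument since $z(s)\in\cen(A)$ acts by $\mu_L$-preserving translations. Your disintegration scheme is not obviously wrong, but it multiplies the singular sets $N(F,W_{s_0})$ over a continuum of directions $W_{s_0}$ and makes the linearization step substantially harder; the twist buys a single fixed $W$ and a single linearization argument, and is the cleaner route.

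Second, and more seriously, the linearization step as you describe it would fail. You assert that confinement of $s\mapsto\rho(a_tu(\phi(s)))v$ near a proper $\rho(U^+)$-invariant subspace ``forces a nontrivial polynomial relation among the $\phi_i$; by analyticity it would hold on all of $[a,b]$, placing the image of $\phi$ in a proper affine subspace.'' A nontrivial polynomial relation among the $\phi_i$ does \emph{not} place $\phi$ in a proper affine subspace: the Veronese curve $\phi(s)=(s,s^2,\dots,s^{n-1})$ satisfies many polynomial relations ($\phi_2=\phi_1^2$, etc.) while affinely spanning $\R^{n-1}$. What the confinement $u(\phi(s))v\in V^0+V^-$ really encodes is an interaction between the $a_t$-weight decomposition of the $G$-representation $V$ and the affine span of $\phi(I)$, and untangling it is exactly the paper's ``Basic Lemma'' (Proposition~\ref{prop:main}): it shows that if $\{\phi(s)\}$ contains an affine basis of $\R^{n-1}$ then, under that confinement, $\pi_0(u(\phi(s))v)\ne0$ for all $s$. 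The proof decomposes $V$ under intertwined $\la{sl}_2$-triples $(X_i,H_i,Y_i)$ built from an affine basis and exploits $\cA=\sum H_i$ to pin down the weights — a detailed representation-theoretic argument with no counterpart in your sketch. Moreover, the conclusion the paper needs from confinement in the linearization is not ``$\phi$ degenerate'' but rather that the relevant vector $v=g_0\gamma p_H$ must be $G$-\emph{fixed} (Corollary~\ref{cor:curve-main}, which builds on the Basic Lemma via $\cen(A)$-invariance of $\pi_0(u(\phi(s))v)$ and an approximation argument using $\dot\phi$); from $G$-fixedness it then deduces $\rho(G)\subset\noroneL(H)$, analyzes $\noroneL(H)$ via Ratner's topological theorem and the minimality of $H$, and concludes $H=L$. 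This chain is what replaces your hoped-for ``convert confinement into degeneracy of the curve,'' and none of it is supplied by your proposal.

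In short: your nondivergence and Ratner framework is sound and essentially the same as the paper's, your window-shadowing scheme is a workable-but-heavier alternative to the paper's $\cen(A)$-twisting, and the decisive step — what you label ``the main obstacle'' — is precisely where your argument breaks down; it requires the Basic Lemma (and its corollary that the obstructing vector is $G$-fixed), which is the paper's principal new contribution, not a reduction to the curve lying in a proper affine subspace.
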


The first part of Theorem~\ref{thm:main} follows from
Theorem~\ref{thm:action} by taking $L=G$, $\rho$ the identity map, and
$\Lambda=\Gamma$; in this case $H=G$.

Define an automorphism $\sigma:G\to G$ as follows: let $s_k$ be the
permutation on the $n$-coordinates of $\R^n$ which exchanges the
$i$-th and the $(n+1-i)$-th coordinates for $1\leq i\leq n$, and we
let $\sigma(g)=s_k(\trn{g}\inv)s_k\inv$ for all $g\in G$. Then
$\sigma(a_t)=a'_t$ and $\sigma(u(\vxi))=u'(\xi)$. Now the second part
of Theorem~\ref{thm:main} follows from Theorem~\ref{thm:action} by
taking $L=G$, $\rho=\sigma$ and $\Lambda=\Gamma$; we observe that
$H=L$ in this case.

\subsubsection{} \label{sec:Dir2} Next we take $L=G\times G$,
$\Gamma=\SL(n,\Z)$, $\Lambda=\Gamma\times\Gamma$ and
$\rho(g)=(g,\sigma(g))$ for all $g\in G$. Note that
$\sigma(\Gamma)=\Gamma$, and hence $\rho(\Gamma)\subset\Lambda$ and
$\rho(\Gamma)$ is a lattice in $\rho(G)$. Therefore if we let
$x_0=\Lambda$, then $\rho(G)x_0$ is closed and admits a finite
$\rho(G)$-invariant measure; in other words, we have $H=\rho(G)$. Now
using Theorem~\ref{thm:action}, in the next section we will deduce the
following enhancement of Theorem~\ref{thm:Dirichlet}

\begin{theorem}
  \label{thm:Dirichlet-2}
  Let $\phi=(\phi_1,\dots,\phi_k):I=[a,b]\to \R^k$ be an analytic curve
  whose image is not contained in a proper affine subspace. Let $\cN$
  be an infinite set of positive integers. Then for almost every $s\in
  I$ and any $\mu<1$, there exist infinitely many $N\in\cN$ such that
  both the following pairs of inequalities are simultaneously
  insoluble:
\begin{equation}
  \label{eq:41}
  \abs{q_1\phi_1(s)+\dots+q_k\phi_k(s)-p}\leq \mu N^{-k} \quad \text{and} \quad
  \max_{1\leq i\leq k}\abs{q_i}\leq \mu N
\end{equation}
for $(q_1,\dots,q_k)\in \Z^k\setminus\{\vzero\}$ and
$p\in\Z$; and
\begin{equation}
  \label{eq:51}
  \max_{1\leq i\leq k}\abs{q\phi_i(s)-p_i}\leq \mu N^{-1}
  \quad\text{and}\quad 
  \abs{q}\leq \mu N^k
\end{equation}
for $q\in\Z\setminus\{0\}$ and $(p_1,\dots,p_k)\in\Z^k$.
\end{theorem}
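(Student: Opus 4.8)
The plan is to deduce Theorem~\ref{thm:Dirichlet-2} from Theorem~\ref{thm:action} applied in the setting described in \S\ref{sec:Dir2}, by translating each pair of inequalities into a statement about an orbit of the form $\rho(a_t u(\phi(s)))x_0$ entering a prescribed neighborhood of the cusp in $(G/\Gamma)\times(G/\Gamma)$. First I would recall the classical dictionary, due to Dani and used in \cite{Kleinbock+Weiss:Dirichlet}: for the lattice $\mathbb{Z}^{n+1}$ (here $n=k+1$, so we work in $\mathbb{R}^{k+1}$), the solubility of \eqref{eq:41} with parameter $\mu$ and integer $N$ is equivalent to the lattice $a_t u(\phi(s))\mathbb{Z}^{k+1}$ containing a nonzero vector in a certain box of covolume $\mu^{k+1}$, where $e^{(k+1)t}$ corresponds to $N$; insolubility means this lattice lies in the complement of that box, i.e. in a fixed open neighborhood $\Omega_1$ of infinity in $G/\Gamma$ whose complement is a compact set $K_1$ depending only on $\mu$. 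Dually, the insolubility of \eqref{eq:51} corresponds to $a'_t u'(\phi(s))\mathbb{Z}^{k+1}=\sigma(a_t u(\phi(s)))\mathbb{Z}^{k+1}$ avoiding an analogous box, i.e.\ lying in an open neighborhood $\Omega_2$ of infinity. Hence the \emph{simultaneous} insolubility of \eqref{eq:41} and \eqref{eq:51} for the same $N$ is exactly the condition that $\rho(a_t u(\phi(s)))x_0 \in \Omega_1\times\Omega_2 =: \Omega$, an open neighborhood of the "cusp'' in $L/\Lambda = (G/\Gamma)^2$, where $t=t(N)$ ranges over the discrete set $\{\tfrac{1}{k+1}\log N : N\in\cN\}$.

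The core of the argument is then a Borel--Cantelli-type statement: for almost every $s$, the orbit point $\rho(a_{t(N)} u(\phi(s)))x_0$ lies in $\Omega$ for infinitely many $N\in\cN$. I would obtain this from Theorem~\ref{thm:action} as follows. Since $H=\rho(G)$ in this setting and $\mu_H$ is the $\rho(G)$-invariant probability measure on the closed orbit $\rho(G)x_0$, and since $\Omega$ is open, we have $\mu_H(\Omega)>0$ (the complement $K_1\times K_2$ is compact, so it cannot have full measure). Choose a bounded continuous $f$ with $0\le f\le 1$, supported in $\Omega$, and with $\int f\,d\mu_H =: c > 0$. Applying \eqref{eq:9L} gives, for each fixed large $N$,
\begin{equation*}
  \liminf_{N\in\cN,\,N\to\infty} \frac{1}{\abs{b-a}}\int_a^b f\bigl(\rho(a_{t(N)} u(\phi(s)))x_0\bigr)\,ds \ge c.
\end{equation*}
In particular the set $E_N := \{s\in I : \rho(a_{t(N)} u(\phi(s)))x_0 \in \Omega\}$ has $\abs{E_N}\ge c\abs{b-a}$ for all sufficiently large $N\in\cN$. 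Thus $\limsup_{N\in\cN} \abs{E_N} \ge c\abs{b-a} > 0$, which forces the lim sup set $E_\infty := \bigcap_{M}\bigcup_{N\in\cN,\,N\ge M} E_N$ to have positive measure: indeed $\abs{\bigcup_{N\ge M} E_N}\ge c\abs{b-a}$ for every $M$, and this decreasing sequence of sets has intersection $E_\infty$, so $\abs{E_\infty}\ge c\abs{b-a}$. Every $s\in E_\infty$ satisfies the conclusion of the theorem for the value $\mu$ in question.

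To upgrade "positive measure'' to "almost every $s$'' and to handle all $\mu<1$ simultaneously, I would run a zero--one argument. Fix $\mu<1$; it suffices to treat a countable sequence $\mu_j\uparrow 1$ and intersect. For fixed $\mu$, the bad set $B$ of $s\in I$ for which the conclusion fails is the set of $s$ lying in only finitely many $E_N$. The family of curves $\{\phi|_{[a',b']} : [a',b']\subseteq I\}$ satisfies the hypotheses of Theorem~\ref{thm:action} on every subinterval, so the argument above shows $\abs{E_\infty \cap [a',b']}\ge c\abs{b'-a'}$ for every subinterval $[a',b']\subseteq I$, with the \emph{same} constant $c$ (which depends only on $\mu$ and $x_0$, not on the subinterval). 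By the Lebesgue density theorem, a measurable set whose relative measure in every subinterval is bounded below by a fixed $c>0$ must have full measure; hence $\abs{E_\infty}=\abs{I}$, i.e.\ $\abs{B}=0$. Taking the union of the null sets $B=B(\mu_j)$ over $j$ gives a single null set outside which the conclusion holds for every $\mu<1$ (monotonicity in $\mu$ reduces any $\mu<1$ to some $\mu_j>\mu$). The main obstacle — and the only genuinely delicate point — is the precise Dani-type correspondence: one must verify carefully that the neighborhoods $\Omega_1,\Omega_2$ of the cusp can be taken \emph{open} (so that $f$ supported in $\Omega$ captures them) while their complements remain compact, and that the discretization $N\leftrightarrow t=\frac{1}{k+1}\log N$ is compatible with the continuous-time limit in \eqref{eq:9L}; once that bookkeeping is in place, the measure-theoretic steps are routine. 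Note that the normalization $a_t=\mathrm{diag}(e^{(k+1)t\cdot\frac{1}{1}},\dots)$ in \eqref{eq:50} with $n=k+1$ is chosen precisely so that $\det$ of the relevant box scales correctly; I would double-check the exponents against the inequalities \eqref{eq:41}–\eqref{eq:51} to fix $t(N)$ exactly.
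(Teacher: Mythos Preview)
Your overall strategy matches the paper's: translate the simultaneous insolubility of \eqref{eq:41} and \eqref{eq:51} into the condition that $\rho(a_t u(\phi(s)))x_0$ lies in a fixed set of positive $\mu_H$-measure, apply Theorem~\ref{thm:action} on every subinterval to get a uniform lower density for the ``good'' set, and conclude by Lebesgue density that the bad set is null. The handling of all $\mu<1$ via a countable sequence $\mu_j\uparrow 1$ is also fine.

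However, your Dani dictionary is inverted, and the error propagates to your positivity argument. Solubility of \eqref{eq:41} means the lattice $a_t u(\phi(s))\Z^{k+1}$ contains a nonzero vector in the box $B_\mu$; by Mahler's criterion this is the condition that the lattice lies \emph{near the cusp}. Insolubility therefore means the lattice lies in the set
\[
K_\mu=\{\Delta\in G/\Gamma:\Delta\cap B_\mu=\{0\}\},
\]
which is open and \emph{relatively compact}, not a neighbourhood of infinity. Thus the target set for simultaneous insolubility is $K_\mu\times K_\mu$, whose complement is a neighbourhood of the cusp---the opposite of what you wrote. Consequently your argument that $\mu_H(\Omega)>0$ ``because the complement is compact'' fails: the complement of $K_\mu\times K_\mu$ is not compact. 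The correct (and equally short) reason, which the paper uses, is that $\mu<1$ implies $\Z^{k+1}\in K_\mu$, so $K_\mu\times K_\mu$ is an open set containing the basepoint $x_0=(\Z^{k+1},\Z^{k+1})$ of the closed orbit $\rho(G)x_0$, and hence has positive $\mu_H$-measure. Once you make this correction the rest of your argument goes through unchanged. (A minor bookkeeping point: with $n=k+1$ one has $a_t=\diag(e^{kt},e^{-t},\dots,e^{-t})$, so the correct parametrisation is $t=\log N$, not $t=\tfrac{1}{k+1}\log N$.)
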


\subsection{Sketch of the proof of Theorem~\ref{thm:action}} \label{sec:sketch}
Let $I=[a,b]$. We will treat $G$ as a subset of $L$ via the
homomorphism $\rho$. We consider the normalized parameter measure, say
$\nu$, on the segment $\{u(\phi(s))x_0:s\in I\}$ on $\lml$. Take any
sequence $t_i\to\infty$. Let $a_{t_i}\nu$ denote the translate of
$\nu$ concentrated on the curve $a_{t_i}u(\phi(I))x_0$. By
Dani-Margulis criterion for nondivergence, if for every compact set
$\cF\subset\lml$, we have
\begin{equation}
  a_{t_i}\nu(\cF):=\frac{1}{\abs{b-a}}\abs{\{s\in[a,b]:a_{t_i}u(\phi(s))x_0\in\cF\}}\to 0 \quad \text{as $i\to\infty$},
\end{equation}
then the following algebraic condition holds: There exists a finite
dimensional representation $V$ of $G$ and a nonzero vector $v\in V$
such that
\begin{equation}
  \label{eq:102}
\lim_{i\to\infty} a_{t_i}u(\phi(s))v\to 0,  \quad\forall s\in I.
\end{equation}

The `Basic Lemma' (Proposition~\ref{prop:main}), which is one of the
main tools developed in this article, says that if $\{\phi(s):s\in I\}$
affinely spans $\R^{n-1}$ then \eqref{eq:102} cannot hold.

Therefore using the Dani-Margulis criterion we deduce that given any
$\epsilon>0$ there exists a large compact set $\cF\subset\lml$ such
that $(a_{t_i}\nu)(\cF)\geq 1-\epsilon$. As a consequence, there
exists a probability measure $\mu$ on $\lml$ such that after passing
to a subsequence, $a_{t_i}\nu\toinfty\mu$ with respect to the
weak-$\ast$ topology.

At this stage, we note that $u(\phi(I))x_0$ is contained in a strongly
unstable leaf for the action of $a_t$ on $\lml$. Then for each $s_0\in
I$ if $\dot\phi(s_0)$ denotes the derivative of $\phi$ at $s_0$, then
\begin{equation}
  \label{eq:113}
\phi(s)-\phi(s_0)=(s-s_0)\dot\phi(s_0)+O((s-s_0)^2).
\end{equation}
Therefore for any large $t>0$, the translated curve
\begin{equation}
  \label{eq:114}
  \{a_tu(\phi(s))x_0=
  u(e^{nt}(\phi(s)-\phi(s_0)))(a_tu(\phi(s_0))x_0):\abs{s-s_0}<\delta_t\}
\end{equation}
stays very close to the unipotent trajectory
\begin{equation}
  \label{eq:unip}
  \{u(e^{nt}(s-s_0)\dot\phi(s_0))(a_tu(\phi(s_0))x_0):\abs{s-s_0}<\delta_t\},
\end{equation}
if we choose $\delta_t>0$ such that $e^{nt}\delta_t\to\infty$, but
$e^{nt}\delta_t^2\to 0$ as $t\to\infty$.  Note that the length of the
unipotent trajectory \eqref{eq:unip} is about
$e^{nt}\delta_t\norm{\dot\phi(s_0)}$ and hence it is very long. Our
difficultly is that the direction $\dot\phi(s_0)$ of the flow varies
with $s_0$.

In order to take care of this problem, instead of translating the
original curve, we twist it by $z(s)\subset \cen(A)\cong A\times
\SL(n-1,\R)$ so that $z(s)u(\dot\phi(s))z(s)\inv=u(e_1)$ for all $s\in
I$; here $A=\{a_t:t\in\R\}$, $e_1$ is a fixed nonzero vector in
$\R^{n-1}$ and we assume that $\dot\phi(s)\neq 0$ for all $s\in I$. We
take another curve: $\{z(s)u(\phi(s))x_0:s\in I\}$ and associate a
normalized parameter measure $\nu'$ on it. Since $z(I)$ is contained
in a compact set, we conclude that after passing to a subsequence
$a_{t_i}\nu'$ converges to a probability measure $\nu'$ as
$i\to\infty$. Now one can show that $\nu'$ is invariant under the flow
$\{u(se_1):s\in \R\}$. Then we will be in the situation where we can
use Ratner's theorem and the so called `linearization
technique'. Using both and the `Basic Lemma', we will show that there
exist a nonzero vector $v\in V$, which is algebraically associated to
$x_0$, and a curve $I\ni s\mapsto w(s)\in V$ such that
\begin{equation}
  \label{eq:103}
  \lim_{t\to\infty} a_{t}u(\phi(s))v\to w(s), \quad \forall s\in I. 
\end{equation}
Again using an extension of the `Basic lemma'
(Corollary~\ref{cor:curve-main}) we show that $v$ is fixed by
$G$. From this very restrictive situation, we further deduce that the
measure $\nu'$ is $H$-invariant, where $H$ is the smallest closed
subgroup of $L$ containing $G$ such that the orbit $Hx_0$ is
closed. Since the modification of $\nu$ to obtain $\nu'$ was only by
elements centralizing $A$ in $G$, and since we have shown that $\nu'$
is invariant under $\cen(A)$, we obtain that $\nu=\nu'$, and hence
$\nu$ is $H$-invariant. This will prove Theorem~\ref{thm:action}.

\subsection{Variations of the equidistribution result}
\label{sec:variations}
\subsubsection{Expanding translates of any curve} The following
 variation of Theorem~\ref{thm:action} is more natural.

\begin{theorem}
  \label{thm:curve}
  Let $\vpsi:[a,b]\to \SL(n,\R)$ be an analytic curve such that the image
  of the first row of this curve is not contained in a proper subspace
  of $\R^n$.  Let the notation be as in the statement of
  Theorem~\ref{thm:action}. Then
\begin{equation}
  \label{eq:62}
  \frac{1}{b-a}\int_a^b
  f(\rho(a_t\vpsi(s))x_0)\,d s \stackrel{t\to\infty}{\longrightarrow} 
\int_{Hx_0} f\,d\mu_H.
\end{equation}
\end{theorem}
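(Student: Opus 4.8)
The plan is to reduce Theorem~\ref{thm:curve} to Theorem~\ref{thm:action} by a Gauss-type factorization of $\vpsi(s)$ adapted to the expanding subgroup $A=\{a_t\}$. Write $\vpsi(s)$ in block form with diagonal blocks of sizes $1$ and $n-1$, so that the first row of $\vpsi(s)$ is $\ve(s)=(e_1(s),\dots,e_n(s))$, $e_1(s)$ being the $(1,1)$-entry. If $e_1$ were identically zero then $\ve(I)$ would lie in the proper subspace $\{0\}\times\R^{n-1}$, against the hypothesis; so $e_1$ is a nonzero analytic function on $I=[a,b]$, whence $U:=\{s\in I:e_1(s)\neq0\}$ is open with finite complement. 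On $U$ there is an analytic factorization
\[
\vpsi(s)=g^{-}(s)\,z(s)\,u(\phi(s)),\qquad\text{where }\phi_i(s)=e_{i+1}(s)/e_1(s),
\]
$g^{-}(s)=\psmat{1 & 0\\ \ast & I_{n-1}}$ belongs to the horospherical subgroup contracted by $a_t$, and $z(s)=\diag(e_1(s),\ast)\in\cen(A)$; that $z(s)$ really lands in $\cen(A)\subset G=\SL(n,\R)$ is forced by $\det\vpsi(s)=1$.

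Two elementary facts drive the reduction. First, the nondegeneracy hypothesis passes to $\phi$: for every closed subinterval $J\subseteq U$ the map $\phi|_J$ is analytic and $\phi(J)$ affinely spans $\R^{n-1}$; otherwise $\phi(J)$ would lie in some affine hyperplane $\{x:\innprod{c}{x}=d\}$ with $c\neq 0$, and then $s\mapsto\sum_{j\geq 2}c_{j-1}e_j(s)-d\,e_1(s)$, analytic on all of $I$, would vanish on $J$ and hence on $I$, putting $\ve(I)$ in a proper subspace. Second, from
\[
a_t\vpsi(s)=\bigl(a_tg^{-}(s)a_t\inv\bigr)\,z(s)\,\bigl(a_tu(\phi(s))\bigr),
\]
together with the facts that $z(s)$ commutes with $a_t$ and that $a_tg^{-}(s)a_t\inv\to e$ as $t\to\infty$, uniformly for $s$ in any compact subset of $U$ (because $g^{-}(\cdot)$ lies in the contracted horospherical subgroup), we see that over compact subsets of $U$ the point $\rho(a_t\vpsi(s))x_0$ stays uniformly close to $\rho(z(s))\rho(a_tu(\phi(s)))x_0$.

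Now let $f$ be a bounded continuous function on $\lml$. Since each $\abs{b-a}\inv\int_a^b\delta_{\rho(a_t\vpsi(s))x_0}\,ds$ is a probability measure on $\lml$ and $\mu_H$ is a probability measure, it suffices to prove \eqref{eq:62} for $f\in\Cc(\lml)$ (vague convergence of probability measures to a probability measure is weak convergence). Fix $\epsilon>0$; choose a compact $U'\subset U$ with $\abs{I\setminus U'}<\epsilon$, partition $U'$ into closed intervals $J_1,\dots,J_m$ with marked points $s_r\in J_r$ fine enough, and $t_0$ large enough, that
\[
\bigl|f(\rho(a_t\vpsi(s))x_0)-f(\rho(z(s_r))\rho(a_tu(\phi(s)))x_0)\bigr|<\epsilon\qquad(t\geq t_0,\ s\in J_r,\ 1\leq r\leq m),
\]
using $\sup_{y\in\lml}\abs{f(hy)-f(y)}\to0$ as $h\to e$ in $L$ together with the uniform convergence $a_tg^{-}(s)a_t\inv\to e$ and the uniform continuity of $s\mapsto z(s)$ on $U'$. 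For each $r$, Theorem~\ref{thm:action} applied to the bounded continuous function $y\mapsto f(\rho(z(s_r))y)$ and to the nondegenerate analytic curve $\phi|_{J_r}$ gives
\[
\lim_{t\to\infty}\frac{1}{\abs{J_r}}\int_{J_r}f(\rho(z(s_r))\rho(a_tu(\phi(s)))x_0)\,ds=\int_{Hx_0}f(\rho(z(s_r))y)\,d\mu_H(y)=\int_{Hx_0}f\,d\mu_H,
\]
the last equality because $\rho(z(s_r))\in\rho(G)\subseteq H$ preserves both the orbit $Hx_0$ and its $H$-invariant probability measure $\mu_H$. Summing over $r$ with weights $\abs{J_r}/\abs{b-a}$ and absorbing the $I\setminus U'$ part into an error that tends to $0$ with $\epsilon$, we obtain \eqref{eq:62} for $f\in\Cc(\lml)$, and the proof is complete.

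The crux --- and the only genuine obstacle --- is conceptual: whereas in Theorem~\ref{thm:action} the translated curve $\rho(a_tu(\phi(s)))x_0$ lies in a single strongly unstable leaf, the curve $\rho(a_t\vpsi(s))x_0$ does not, so one must peel off from $\vpsi(s)$ the two offending pieces, namely the $a_t$-contracted factor $g^{-}(s)$ (which washes out) and the $A$-centralizing factor $z(s)$ (which persists but, lying in $\rho(G)\subseteq H$, fixes $\mu_H$). The remaining work --- making the comparison $\rho(a_t\vpsi(s))x_0\approx\rho(z(s_r))\rho(a_tu(\phi(s)))x_0$ uniform enough to interchange the refinement of the partition with $t\to\infty$, and verifying that the finitely many $s$ at which the factorization degenerates are negligible --- is routine.
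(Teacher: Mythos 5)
Your proof is correct and follows essentially the same strategy as the paper: factor $\vpsi(s)=\psi_-(s)\psi_0(s)u(\phi(s))$ with $\psi_-\in U^-$ (washing out under $a_t$-conjugation) and $\psi_0\in\cen(A)$ (handled by a fine partition and the $\cen(A)$-invariance of $\mu_H$), transfer the nondegeneracy of the first row of $\vpsi$ to the affine nondegeneracy of $\phi$, and apply Theorem~\ref{thm:action} on each subinterval. The paper is terser, referring to \cite[Lemma~5.1]{Shah:son1} for the factorization and to the proof of Corollary~\ref{cor:mu} for the partition step, but the underlying argument is the same.
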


The above result will be deduced from Theorem~\ref{thm:main} by using
\cite[Lemma~5.1]{Shah:son1}.

\subsubsection{Uniform versions} 
First we state the basic uniform version of Theorem~\ref{thm:main}.

\begin{theorem}
  \label{thm:uniform}
  Let the notation be as in Theorem~\ref{thm:main}. Then given any
  compact set $\cK\subset G/\Gamma$, a bounded continuous function $f$
  on $G/\Gamma$ and an $\epsilon>0$, there exists $t_0>0$ such that
  for any $x\in\cK$ and $t\geq t_0$, we have
  \begin{gather}
    \label{eq:52}
    \Abs{\int_a^b f(a_tu(\phi(s))x)\,ds-\int f\,d\mu_G}\leq \abs{b-a}\epsilon\\
    \Abs{\int_a^b f(a'_tu'(\phi(s))x)\,ds-\int f\,d\mu_G}\leq \abs{b-a}\epsilon.
  \end{gather}
\end{theorem}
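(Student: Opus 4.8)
We prove the following uniform strengthening of Theorem~\ref{thm:action}: given a compact set $\cK\subset Hx_0$, a bounded continuous $f$ on $L/\Lambda$ and $\epsilon>0$, there is $t_0>0$ so that $\bigl|\frac{1}{\abs{b-a}}\int_a^b f(\rho(a_tu(\phi(s)))x)\,ds-\int_{Hx_0}f\,d\mu_H\bigr|\le\epsilon$ for every $x\in\cK$ and $t\ge t_0$. Theorem~\ref{thm:uniform} is the case $L=G$, $\rho=\mathrm{id}$, $\Lambda=\Gamma$ (so $H=G$, $\mu_H=\mu_G$) for the first inequality in \eqref{eq:52}; the second inequality follows, exactly as the two displays of Theorem~\ref{thm:main} were deduced from Theorem~\ref{thm:action}, by instead taking $\rho=\sigma$ and $\Lambda=\Gamma$, in which case again $H=G$ and $Hx_0=G/\Gamma$. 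The proof is by contradiction: if the uniform statement fails there are $\epsilon>0$, $t_i\to\infty$ and $x_i\in\cK$ with $\abs{\mu_i(f)-\mu_H(f)}\ge\epsilon$, where $\mu_i$ is the push-forward of normalized Lebesgue measure on $I=[a,b]$ under $s\mapsto\rho(a_{t_i}u(\phi(s)))x_i$. The whole point is then to re-run the proof of Theorem~\ref{thm:action} with the \emph{varying} base point $x_i\in\cK$ in place of the fixed $x_0$, and extract a contradiction.

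\textbf{Step 1: uniform non-divergence.}
Apply the Dani--Margulis criterion on $L/\Lambda$ to the family $\{\mu_i\}$. Either there is no escape of mass, i.e. for every $\eta>0$ there is a compact $\cF$ with $\mu_i(\cF)\ge1-\eta$ for all large $i$, or, after passing to a subsequence, there are a finite-dimensional representation $V$ of $G$ and nonzero vectors $v_i\in V$ algebraically associated to $x_i$ with $\sup_{s\in I}\norm{a_{t_i}u(\phi(s))v_i}\to0$. As in the proof of the non-divergence step of Theorem~\ref{thm:action}, passing to a limit of the $v_i$ in projective space and invoking the estimates \eqref{eq:113} together with the Basic Lemma (Proposition~\ref{prop:main}) rules out the second alternative, since $\phi(I)$ affinely spans $\R^{n-1}$; hence there is no escape of mass. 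The observation that makes this uniform over $x_i\in\cK$ is that the ``$(C,\alpha)$-good'' and non-contraction hypotheses feeding the Dani--Margulis criterion depend only on $\phi$, while the compactness of $\cK$ controls the finitely many low-height $\Lambda$-rational data relevant for any fixed $t_i$.

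\textbf{Step 2: limit measure, unipotent invariance, Ratner and linearization.}
Passing to a subsequence, $\mu_i\toinfty\mu$ for a probability measure $\mu$ on $L/\Lambda$ with $\abs{\mu(f)-\mu_H(f)}\ge\epsilon$, so $\mu\ne\mu_H$. As in \S\ref{sec:sketch} we may assume $\dot\phi(s)\ne\vzero$ on $I$, and we twist by an analytic $z(s)\in\cen(A)$ with $z(s)u(\dot\phi(s))z(s)\inv=u(e_1)$. Since $z(I)$ is relatively compact, the push-forwards of normalized Lebesgue under $s\mapsto\rho(a_{t_i}z(s)u(\phi(s)))x_i$ subconverge to a probability measure $\nu'$, and $\mu$ is the image of $\nu'$ under the fixed bounded twist, so $\nu'\ne\mu_H$. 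Exactly as in the sketch, the estimates \eqref{eq:113}--\eqref{eq:unip}, applied now with base point $\rho(a_{t_i}z(s_0)u(\phi(s_0)))x_i$ and being uniform in $s_0\in I$ and independent of the $x_i$, show that $\nu'$ is invariant under $\{u(se_1):s\in\R\}$. Then Ratner's measure-classification theorem and the linearization technique apply to $\nu'$ verbatim as in the proof of Theorem~\ref{thm:action}: using the Basic Lemma and its curve extension (Corollary~\ref{cor:curve-main}) one shows $\nu'$ gives zero mass to the singular set of every proper intermediate subgroup --- otherwise linearization produces a nonzero $v\in V$ algebraically associated to $x_0$ with $\lim_{t\to\infty}a_tu(\phi(s))v=w(s)$ on $I$, forcing $v$ to be $G$-fixed and leading to a contradiction --- so $\nu'$ is $H$-invariant, i.e. $\nu'=\mu_H$, contrary to $\nu'\ne\mu_H$.

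\textbf{Main obstacle.}
The only genuine difference from the fixed-base-point argument, and the point requiring care, is the \emph{uniformity} of the two key ingredients over $\cK$: that the non-divergence estimates of Step~1 and the obstruction-measure estimates of the linearization in Step~2 (the $\nu'$-mass of tubular neighborhoods of singular sets) hold with constants independent of $x_i\in\cK$. This is so because both mechanisms are governed by the affine-spanning property of $\phi$ through the Basic Lemma and its corollary, which make no reference to the base point, while compactness of $\cK$ handles the remaining finitely many arithmetic possibilities for each $t_i$. With these uniformities established the contradiction is complete, which proves the uniform form of Theorem~\ref{thm:action} and hence both inequalities of Theorem~\ref{thm:uniform}.
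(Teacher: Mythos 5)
Your strategy is correct and in essence it is the paper's own strategy: suppose uniformity fails, extract a sequence $t_i\to\infty$, $x_i\in\cK$ witnessing the failure, and run the non-divergence / Ratner / linearization machinery to reach a contradiction. The paper's proof is, however, much shorter, and it is worth seeing why: the machinery of \S\ref{sec:nondiv}--\S\ref{sec:Ratner} was set up from the very beginning with a \emph{convergent sequence} $x_i\to x_0$ of base points (see the definition \eqref{eq:13} of $\mu_i$), precisely so that Corollary~\ref{cor:mu} would deliver the uniform statement as well as the pointwise one. Thus the only new step in proving Theorem~\ref{thm:uniform} is: use compactness of $\cK$ to pass to a subsequence along which $x_i\to x_0\in\cK$, and then invoke Corollary~\ref{cor:mu} --- nothing has to be ``re-run.'' You never explicitly pass to a convergent subsequence; you implicitly use it in Step~2 when you speak of ``a nonzero $v\in V$ algebraically associated to $x_0$,'' so you should make that extraction explicit. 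Once you do, Steps~1 and~2 of your argument are not a new uniform version of Theorem~\ref{thm:action} but literally Theorem~\ref{thm:return}, Corollary~\ref{cor:mu-return}, Theorem~\ref{thm:equi} and Corollary~\ref{cor:mu} applied to the extracted subsequence, and your ``Main obstacle'' about uniformity over $\cK$ evaporates --- there is no uniformity to establish beyond the convergence of a subsequence. Your appeal to ``finitely many arithmetic possibilities for each $t_i$'' is not needed and is not quite the right mechanism; the compactness of $\cK$ is used only to get a limit point, not to bound arithmetic data.
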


The following result is a general uniform version for
Theorem~\ref{thm:action}.

\begin{theorem}
  \label{thm:uniform:action}
  Let $\phi:[a,b]\to \R^{n-1}$ be an analytic curve whose image is not
  contained in a proper affine subspace of $\R^{n-1}$.  Let $L$ be a
  Lie group, $\Lambda$ a lattice in $L$ and $\pi:L\to L/\Lambda$ the
  quotient map. Let $\rho:G\to L$ be a continuous homomorphism. Let
  $\cK$ be a compact subset of $L/\Lambda$. Then given $\epsilon>0$
  and a bounded continuous function $f$ on $L/\Lambda$, there exist
  finitely many proper closed subgroups $H_1,\dots,H_r$ of $L$ such
  that for each $1\leq i\leq r$, $H_i\cap\Lambda$ is a lattice in
  $H_i$ and there exists a compact set
  \begin{equation}
\label{eq:59}
C_i\subset N(H_i,\rho(G)):=\{g\in L:\rho(G)g\subset gH_i\}    
  \end{equation}
  such that the following holds: Given any compact set 
\[
F\subset \cK\setminus \cup_{i=1}^r \pi(C_i)
\]
there exists $t_0>0$ such that for any $x\in F$ and any $t\geq t_0$,.
\begin{equation}
  \label{eq:60}
  \Abs{\frac{1}{b-a}\int_a^b f(\rho(a_tu(\phi(s)))x)\,ds - 
\int_{L/\Lambda} f\,d\mu_L} < \epsilon.
\end{equation}
\end{theorem}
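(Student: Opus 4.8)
The plan is to deduce Theorem~\ref{thm:uniform:action} from Theorem~\ref{thm:action} by a compactness-and-contradiction argument combined with the linearization machinery, in the spirit of Dani--Margulis and of \cite{Shah:son1}. Suppose the conclusion fails: then there exist $\epsilon_0>0$, a bounded continuous $f$, a sequence $t_j\to\infty$, and points $x_j\in\cK$ with
\[
\Abs{\frac{1}{b-a}\int_a^b f(\rho(a_{t_j}u(\phi(s)))x_j)\,ds-\int_{L/\Lambda}f\,d\mu_L}\geq\epsilon_0.
\]
Passing to a subsequence we may assume $x_j\to x_\infty\in\cK$ and that the measures $a_{t_j}\nu_{x_j}$ (the pushforwards of normalized parameter measure on $\{u(\phi(s))x_j\}$) converge weak-$\ast$ to some subprobability measure $\mu$ on $L/\Lambda$. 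The Dani--Margulis nondivergence criterion, together with the Basic Lemma (Proposition~\ref{prop:main}) applied uniformly over $x\in\cK$, guarantees $\mu$ is a genuine probability measure and that no escape of mass occurs; this uniformity is exactly what lets one pass from a single base point to a compact family.

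Next I would analyze $\mu$ using Ratner's theorem: $\mu$ decomposes into ergodic components each invariant under a unipotent subgroup (via the twisting-by $z(s)$ trick sketched in \S\ref{sec:sketch}, which produces a limit measure invariant under $\{u(se_1)\}$), so $\mu$ is a convex combination of algebraic measures supported on orbits $g\,N(H,\rho(G))\cdot(\text{points})$ for various proper closed subgroups $H$ with $H\cap\Lambda$ a lattice. The linearization technique of Dani--Margulis--Shah converts the statement ``$\mu$ gives positive mass to $\pi(C_i)$-type sets'' into an algebraic statement about a sequence of vectors $a_{t_j}u(\phi(s))v_j$ in a representation $V$ of $G$ staying bounded for $s$ in a positive-measure subset of $I$; here $v_j$ is the vector associated to the subgroup $H_i$ translated by the approximating group element near $x_j$. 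Corollary~\ref{cor:curve-main} (the extension of the Basic Lemma) forces $v_j$ to be $G$-fixed in the limit, which pins the limit measure $\mu$ to be $\mu_L$ itself --- \emph{unless} the base point $x_\infty$ already lies in one of the exceptional sets $\pi(C_i)$. Since finitely many subgroups $H_1,\dots,H_r$ suffice (this finiteness is the standard consequence of the countability of the relevant lattice-subgroup data and compactness of $\cK$, as in \cite{Klein+Mar:Annals98} and \cite{Shah:son1}), if $x_j$ eventually avoids $\cup\pi(C_i)$ then $\mu=\mu_L$, forcing $\int f\,d(a_{t_j}\nu_{x_j})\to\int f\,d\mu_L$, contradicting the displayed inequality.

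The main obstacle I anticipate is making the linearization uniform in the base point: the exceptional subgroups $H_i$ and compact pieces $C_i$ must be extracted \emph{once and for all} from $\cK$, $f$, $\epsilon$, \emph{before} seeing the bad sequence $x_j$ or the translates $t_j$, whereas the naive argument produces them depending on the sequence. The resolution is the usual two-step structure: first run the contradiction argument to show that for \emph{each} $H$ in the (countable) list, the set of $x\in\cK$ for which translates spend non-negligible time near the corresponding tube is contained in $\pi(C_H)$ for a compact $C_H\subset N(H,\rho(G))$; then a compactness argument (the set of ``dangerous'' subgroups whose tubes meet $\cK$ in a way not covered by smaller subgroups is finite) selects $H_1,\dots,H_r$. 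One must also handle the curve-of-vectors phenomenon \eqref{eq:103}: the limit need not be a single fixed vector but a curve $w(s)$, and it is precisely Corollary~\ref{cor:curve-main} that rules this out and collapses everything to the $G$-fixed case. Given Theorem~\ref{thm:action}, Proposition~\ref{prop:main}, and Corollary~\ref{cor:curve-main}, the remaining work is bookkeeping with the linearization flag varieties and the Dani--Margulis $(C,\alpha)$-good estimates, which is routine though lengthy.
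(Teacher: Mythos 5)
The paper does not in fact supply a detailed argument for Theorem~\ref{thm:uniform:action}: it states only that the result ``can be deduced by following the general strategy of \cite{Dani+Mar:limit} and the method of this article.'' Your proposal follows exactly that strategy, and you correctly identify the key ingredients (nondivergence, Ratner's theorem, linearization, Proposition~\ref{prop:main}, and Corollary~\ref{cor:curve-main}), so the approach matches the intended one.

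One point deserves sharpening. Your opening move---``Suppose the conclusion fails: then there exist $\epsilon_0$, $f$, $t_j\to\infty$, $x_j\in\cK$ with the bad inequality''---is not the correct negation of the theorem, because the conclusion asserts the existence of subgroups $H_1,\dots,H_r$ and compacts $C_i$; the negation would quantify over all possible such choices and then produce a bad sequence avoiding $\cup\pi(C_i)$, which cannot be done before the $C_i$ are known. You recognize this in your final paragraph and correctly outline the fix (build $C_H$ for each $H\in\sH$ first, then prove finiteness over $\cK$), but the mechanism for constructing $C_H$ deserves to be named: it is Proposition~\ref{prop:main3} (the linearization dichotomy, applied with $h_1=a_t$) together with Corollary~\ref{cor:expand}. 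The first says that either the translated vector $a_t z(s)u(\phi(s))\,g\gamma\,p_H$ remains in a prescribed compact set $\Phi$ for all $s$, or the time spent near $\pi(C)$ is $\leq\epsilon\abs{I}$. In the first alternative, Corollary~\ref{cor:expand} (applied to the complement of the $G$-fixed subspace of $V$) forces the vector $g\gamma p_H$ to be $G$-fixed, which is precisely $g\gamma\in N(H,\rho(G))$; boundedness of $\norm{g\gamma p_H}$ then pins $g\gamma$ into a compact $C_H\subset N(H,\rho(G))$. Finiteness of the list $\{H_1,\dots,H_r\}$ then comes from the discreteness of $\Lambda\cdot p_H$ (Proposition~\ref{prop:discrete}) plus compactness of $\cK$ and the uniform $R$-bound produced by nondivergence. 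These are the specific mechanisms you summarize as ``routine bookkeeping''; since the paper does not supply them either, your sketch is consistent with the intended argument, but a complete write-up would need to spell out this dichotomy-and-discreteness chain.
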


If $L$ is an algebraic group then the sets $N(H_i,\rho(G))$ are
algebraic subvarieties of $L$. Therefore unless $H$ contains a normal
subgroup of $L$ containing $G$, the set $\cup_{i=1}^r \pi(C_i)$ is
contained in a finite union of relatively compact lower dimensional
submanifolds of $\lml$.

From the above result the following statement for translates of any
curve can be deduced using \cite[Lemma~5.1]{Shah:son1}.

\begin{theorem}
  \label{thm:curve:uniform}
  Let $\vpsi:[a,b]\to \SL(n,\R)$ be an analytic curve such that the image
  of the first row of this curve is not contained in a proper subspace
  of $\R^n$.  Let the notation be as in the statement of
  Theorem~\ref{thm:uniform:action}. Then given any compact set
  $F\subset \cK\setminus \cup_{i=1}^r \pi(C_i)$ there exists $t_0>0$
  such that for any $x\in F$ and $t\geq t_0$, we have
\begin{equation}
  \label{eq:64}
  \Abs{\frac{1}{b-a}\int_a^b f(\rho(a_t\vpsi(s))x)\,ds - 
    \int_{L/\Lambda} f\,d\mu_L} < \epsilon.
\end{equation}
\end{theorem}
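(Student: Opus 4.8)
The plan is to deduce Theorem~\ref{thm:curve:uniform} from Theorem~\ref{thm:uniform:action} exactly as Theorem~\ref{thm:curve} is deduced from Theorem~\ref{thm:action}, namely by means of \cite[Lemma~5.1]{Shah:son1}. The geometric content of that lemma is that an analytic curve $\vpsi:[a,b]\to\SL(n,\R)$ whose first row is not contained in a proper subspace of $\R^n$ can, locally, be rewritten as a product: there is a subdivision of $[a,b]$ into finitely many subintervals $I_1,\dots,I_m$ such that on each $I_j$ one has $\vpsi(s)=u(\phi_j(s))z_j(s)$, where $\phi_j:I_j\to\R^{n-1}$ is an analytic curve whose image affinely spans $\R^{n-1}$ and $z_j:I_j\to\SL(n,\R)$ is an analytic curve taking values in a fixed compact set (after absorbing a bounded piece, $z_j$ will actually lie in the subgroup stabilizing the flag used to define $u(\cdot)$, so that $a_tu(\phi_j(s))z_j(s)=a_tu(\phi_j(s))$ modulo the compact factor). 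The point is that $a_t$ expands the $u(\cdot)$-directions, so the leading-order behaviour of $a_t\vpsi(s)$ is governed by $a_tu(\phi_j(s))$, while $z_j(s)$ contributes only a uniformly bounded perturbation.

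First I would fix $\epsilon>0$, the compact set $\cK\subset L/\Lambda$, and the bounded continuous function $f$ on $L/\Lambda$. Enlarging $\cK$ slightly to a compact set $\cK'$ that contains a neighbourhood of $\bigcup_j \pi(\rho(\{z_j(s):s\in I_j\}))\cdot\cK$ (possible since the $z_j(I_j)$ are relatively compact in $\SL(n,\R)$ and $\rho$ is continuous), I apply Theorem~\ref{thm:uniform:action} with $\cK'$ in place of $\cK$ and with $\epsilon/2$ in place of $\epsilon$. This yields the proper closed subgroups $H_1,\dots,H_r$, with $H_i\cap\Lambda$ a lattice in $H_i$, and compact sets $C_i\subset N(H_i,\rho(G))$, together with the property that on any compact $F'\subset\cK'\setminus\bigcup_i\pi(C_i)$ there is a uniform time $t_0'$ beyond which the averages of $f$ along $\rho(a_tu(\phi(s)))x$ are within $\epsilon/2$ of $\int f\,d\mu_L$, for \emph{every} analytic curve $\phi$ with affinely spanning image — in particular for each of the finitely many curves $\phi_j$ produced above. (Strictly, one should check that Theorem~\ref{thm:uniform:action} can be applied with the same data $H_i,C_i$ uniformly over the finite family $\phi_1,\dots,\phi_m$; this is immediate by taking the union of the finitely many families of subgroups and the maximum of the finitely many times, or else by invoking the theorem once for a curve that concatenates the $\phi_j$.)

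Now given the compact set $F\subset\cK\setminus\bigcup_i\pi(C_i)$ as in the statement, I would argue as follows. For $x\in F$ and $s\in I_j$, write
\begin{equation*}
  \rho(a_t\vpsi(s))x=\rho(a_tu(\phi_j(s)))\bigl(\rho(z_j(s))x\bigr).
\end{equation*}
Here $\rho(z_j(s))x$ ranges over a compact subset $F'$ of $\cK'$; the crucial point is that $F'$ is disjoint from $\bigcup_i\pi(C_i)$. This is exactly where I expect the real work to sit: one must verify that translating $F$ by the bounded family $\rho(z_j(I_j))$ does not move any point into one of the "bad" sets $\pi(C_i)$. When $L$ is algebraic this follows from the remark after Theorem~\ref{thm:uniform:action} that the $\pi(C_i)$ lie in finitely many relatively compact lower-dimensional submanifolds together with the fact that $N(H_i,\rho(G))$ is $\rho(G)$-stable on the left, so one can choose the $C_i$ to begin with to be saturated under the relevant bounded group of translations; in general one invokes the freedom in Theorem~\ref{thm:uniform:action} to shrink $F'$ away from the closed sets $\pi(C_i)$, noting that $F$ being disjoint from $\pi(C_i)$ and the translating set being compact forces $F'$ to remain at positive distance from $\pi(C_i)$ after possibly enlarging the $C_i$ by the compact translation factor (which remains inside $N(H_i,\rho(G))$). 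Granting this, Theorem~\ref{thm:uniform:action} supplies $t_0$ (take the max over $j=1,\dots,m$) such that for all $x\in F$, all $t\geq t_0$, and all $j$,
\begin{equation*}
  \Abs{\frac{1}{\abs{I_j}}\int_{I_j} f(\rho(a_tu(\phi_j(s)))\rho(z_j(s))x)\,ds-\int_{L/\Lambda} f\,d\mu_L}<\epsilon.
\end{equation*}

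Finally I would assemble the local estimates into the global one: since $[a,b]=\bigsqcup_{j=1}^m I_j$ up to a finite set,
\begin{equation*}
  \frac{1}{b-a}\int_a^b f(\rho(a_t\vpsi(s))x)\,ds
  =\sum_{j=1}^m \frac{\abs{I_j}}{b-a}\cdot\frac{1}{\abs{I_j}}\int_{I_j} f(\rho(a_tu(\phi_j(s)))\rho(z_j(s))x)\,ds,
\end{equation*}
and each inner average is within $\epsilon$ of $\int f\,d\mu_L$ for $t\ge t_0$; taking the convex combination with weights $\abs{I_j}/(b-a)$ summing to $1$ gives \eqref{eq:64}. The main obstacle, as indicated, is the bookkeeping that keeps the translated compact set away from the exceptional sets $\pi(C_i)$ — i.e.\ verifying that the decomposition $\vpsi=u(\phi_j)z_j$ is compatible with the structure of $N(H_i,\rho(G))$ — and carefully checking that a single finite collection $\{H_i\}$, $\{C_i\}$ works simultaneously for all the pieces $\phi_j$; everything else is the routine partition-of-the-interval argument together with a direct citation of \cite[Lemma~5.1]{Shah:son1}.
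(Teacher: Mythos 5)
Your decomposition has the factors in the wrong order, and this creates a genuine gap.  You write $\vpsi(s)=u(\phi_j(s))z_j(s)$ and then compute $\rho(a_t\vpsi(s))x=\rho(a_tu(\phi_j(s)))\bigl(\rho(z_j(s))x\bigr)$, so that the effective base point $\rho(z_j(s))x$ \emph{varies with the integration variable $s$}.  Theorem~\ref{thm:uniform:action} gives a uniform estimate for averages $\frac{1}{b-a}\int_a^b f(\rho(a_tu(\phi(s)))y)\,ds$ over a range of \emph{fixed} base points $y\in F$; it does not give any estimate for an integral in which the base point moves with $s$, and you cannot substitute $y=\rho(z_j(s))x$ into the conclusion of the theorem.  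The local estimate you write,
\begin{equation*}
  \Abs{\frac{1}{\abs{I_j}}\int_{I_j} f\bigl(\rho(a_tu(\phi_j(s)))\rho(z_j(s))x\bigr)\,ds-\int_{L/\Lambda} f\,d\mu_L}<\epsilon,
\end{equation*}
therefore does not follow from Theorem~\ref{thm:uniform:action}.  In addition, your claim that $z_j$ can be taken to lie in ``the subgroup stabilizing the flag used to define $u(\cdot)$'' is generically impossible: if $z_j(s)$ lay in the parabolic $P$ whose unipotent radical is $\{u(\vxi)\}$, then $u(\phi_j(s))z_j(s)$ would have first column proportional to $e_1$, which is not a property of an arbitrary $\vpsi$.

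The factorization that the paper uses (following the proof of Theorem~\ref{thm:curve}, itself modelled on \cite[Lemma~5.1]{Shah:son1}) places $u$ on the \emph{right}: after discarding the finitely many $s$ with $\psi_{1,1}(s)=0$, one writes
\begin{equation*}
  \vpsi(s)=\psi_-(s)\,\psi_0(s)\,u(\phi(s)),\qquad \psi_-(s)\in U^-,\ \ \psi_0(s)\in\cen(A),
\end{equation*}
with $\phi(s)=\bigl(\psi_{1,2}(s)/\psi_{1,1}(s),\dots,\psi_{1,n}(s)/\psi_{1,1}(s)\bigr)$ affinely spanning $\R^{n-1}$.  Then $a_t\psi_-(s)$ converges to $a_t$ uniformly on $I$ because $a_t$ contracts $U^-$, so $\psi_-$ contributes $o(1)$ uniformly; and $\psi_0(s)$ commutes with $a_t$, so $a_t\psi_0(s)u(\phi(s))x=\psi_0(s)\,a_tu(\phi(s))x$.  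The base point $x$ never moves.  The factor $\psi_0(s)$ is absorbed into the test function $f$ by the partition argument of Corollary~\ref{cor:mu}: one partitions $I$ into small subintervals on which $\psi_0$ is nearly constant, replaces $f$ by $f_j(\cdot)=f(\psi_0(s_j)\cdot)$, applies Theorem~\ref{thm:uniform:action} (which is uniform in the base point $x\in F$), and uses the $\cen(A)$-invariance of $\mu_L$ to identify $\int f_j\,d\mu_L=\int f\,d\mu_L$.  With this route the ``main obstacle'' you flagged --- keeping the translated compact set away from the sets $\pi(C_i)$ --- never arises, precisely because nothing translates the base point; so your hand-waved argument about enlarging $C_i$ to be ``saturated under the relevant bounded group of translations'' is both unproven and unnecessary in the intended proof.
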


In the next section we will deduce the number theoretic consequences
from the equidistribution statement. Rest of the article closely
follows the strategy laid out in \S\ref{sec:sketch}. The basic lemma
and its consequences are proved in \S\ref{sec:basic}.
  
\subsubsection*{Acknowledgement} {\small I am very greatful to Shahar
Mozes and Elon Lindenstrauss for the extensive discussions which have
significantly contributed to the process of obtaining the `Basic
Lemma'. Thanks are due to D. Kleinbock, B. Weiss, Y. Bugeaud and
W. Schmidt for their remarks on number theoretic implications of the
results considered here. I would like to thank my wife for her
encourgement.}

\section{Deduction of Theorem~\ref{thm:Dirichlet-2} from
  Theorem~\ref{thm:action}}
The argument given below is based on
\cite[\S2.1]{Kleinbock+Weiss:Dirichlet}.  Let the homomorphism
$\rho(g)=(g,\sigma(g))$ from $G$ to $L=G\times G$, and other notation
be as in \S\ref{sec:Dir2}. Let $n=k+1$. Let $(q_1,\dots,q_k)\in\Z^k$,
$p\in\Z$, $q\in \Z$ and $(p_1,\dots,p_k)\in\Z^k$. Let $N\in\N$ and put
$t=\log(N)$. We consider the standard action of $G\times G$ on
$\R^{n}\times \R^{n}$. For $s\in [a,b]$, we have
\begin{align}
\label{eq:53}
  (\vzeta(N,s),\veta(N,s))&:= \rho(a_tu(\phi(s)))
  \left(
    \left[\begin{smallmatrix} 
        p \\ 
        q_1\\
        \vdots\\
        q_k
      \end{smallmatrix}
    \right],
    \left[\begin{smallmatrix}
        p_k\\
        \vdots\\
        p_1\\
        q
      \end{smallmatrix}
    \right]
  \right)
\\
  &=
  \left(
    \left[\begin{smallmatrix}
        N^k(p+\sum_{i=1}^k q_i\phi_i(s))\\
        N\inv q_1\\
        \vdots\\
        N\inv q_k
      \end{smallmatrix}
    \right],
    \left[\begin{smallmatrix}
        N(q\phi_1(s)+p_k)\\
        \vdots\\
        N(q\phi_k(s)+p_1)\\
        N^{-k}q
      \end{smallmatrix}
    \right]
  \right).
\end{align}
We now fix $0<\mu<1$. Let
\begin{equation}
  \label{eq:55}
  B_\mu=\{(\xi_1,\dots,\xi_n)\in\R^n:\max_{1\leq i\leq n}\abs{\xi_i}\leq\mu\}. 
\end{equation}
Then \eqref{eq:41} is equivalent to $\vzeta(N,s)\in B_\mu$
and \eqref{eq:51} is equivalent to $\veta(N,s)\in B_\mu$. 

Let $\Omega$ denote the space of unimodular lattices in $\R^n$. 
Note that $G$ acts transitively on $\Omega$ and the stabilizer of
$\Z^n$ is $\Gamma$. Similarly $L$ acts transitively on
$\Omega\times\Omega$ and the stabilizer of $x_0:=(\Z^n,\Z^n)$ is
$\Lambda$. Thus $G/\Gamma\cong \Omega$ and $L/\Lambda\cong
\Omega\times\Omega$.

Let
\begin{equation}
  \label{eq:56}
  K_\mu=\{\Delta\in\Omega: \Delta\cap B_\mu=\{\vzero\}\}.
\end{equation}
As we observed above for any $s\in [a,b]$ and $N\in\N$, the
inequalities \eqref{eq:41} and \eqref{eq:51} are simultaneously
insoluble, if for $t=\log N$,
\begin{equation}
  \label{eq:104}
  \rho(a_{t}u(\phi(s)))x_0\in K_\mu\times K_\mu.
\end{equation}

As noted earlier there exists a $\rho(G)$-invariant probability
measure on $\rho(G)x_0\subset \Omega\times\Omega$, say
$\lambda$. Since $\mu<1$, $K_\mu\times K_\mu$ contains an open
neighbourhood of $x_0$. Hence there exists $\epsilon>0$ such that
\begin{equation}
  \label{eq:57}
\lambda(K_\mu\times K_\mu)> \epsilon.
  \end{equation}
  Therefore there exists a continuous function $0\leq f\leq 1$ such that
  \begin{equation}
    \label{eq:40}
    \supp(f)\subset K_\mu\times K_\mu\quad\text{and}\quad 
    \int f\,d\lambda>\epsilon/2.  
  \end{equation}

  Let $J$ be any subinterval of $[a,b]$ with nonempty interior. Then
  by Theorem~\ref{thm:action}, there exists $N_0>0$, such that for all
  $N\geq N_0$, and $t=\log N$,  
  \begin{equation}
    \label{eq:105}
    \frac{1}{\abs{J}}\int_{s\in J} f(\rho(a_{t}u(\phi(s)))x_0)\,ds\geq \epsilon/4,
  \end{equation}
  where $\abs{\cdot}$ denotes the Lebesgue measure on $\R$.  

  Let $\cN\subset\cN$ be an infinite set. Let
\begin{equation}
  \label{eq:106}
  E=\{s\in [a,b]:\rho(a_{(\log N)}u(\phi(s)))x_0\not\in
  K_\mu\times K_\mu \text{ for all large $N\in\cN$}\}.   
\end{equation}
By \eqref{eq:105}, for any subinterval $J\subset[a,b]$, we have
$\abs{J\cap E}\leq (1-\epsilon/4)\abs{J}$.  Therefore $\abs{E}=0$. In
view of the observation associated to \eqref{eq:104}, this proves
Theorem~\ref{thm:Dirichlet-2}. \qed

The Theorem~\ref{thm:Dirichlet} is a special case of
Theorem~\ref{thm:Dirichlet-2}. On the other hand, the proof of
Theorem~\ref{thm:Dirichlet} can also be deduced directly from
Theorem~\ref{thm:main} in a similar way. \qed

\section{Non-divergence of the limiting distribution}
\label{sec:nondiv}
Let $\phi:I=[a,b]\to \R^{n-1}$ be a nonconstant analytic map. Let $x_0\in
\lml$. Given a nontrivial continuous homomorphism $\rho:G\to L$, for the
sake of simplicity of notation in our study, without loss of
generality, we will identify $g\in G$ with $\rho(g)\in L$. Therefore
now onwards we will treat $G$ as a subgroup of $L$, where $\rho$ being
an inclusion. Let $t_i\to\infty$ be any sequence in $\R$. Let $x_i\to
x_0$ be a convergent sequence in $\lml$. For any $i\in\N$ let $\mu_i$
be the measure on $\lml$ defined by
\begin{equation}
  \label{eq:13}
  \int_{\lml} f\,d\mu_i:=\frac{1}{\abs{I}}\int_I f(a_{t_i}u(\phi(s))x_i)\,ds,
  \quad\forall f\in\Cc(\lml).
\end{equation}

\begin{theorem}
  \label{thm:return}
  Given $\epsilon>0$ there exists a compact set $\cF\subset \lml$ such
  that $\mu_i(\cF)\geq 1-\epsilon$ for all large $i\in\N$.
\end{theorem}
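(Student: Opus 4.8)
The plan is to apply the Dani–Margulis nondivergence criterion for polynomial-like trajectories on $L/\Lambda$, combined with the "Basic Lemma" (Proposition~\ref{prop:main}) to rule out the obstruction that the criterion exposes. The key point is that the functions $s\mapsto a_{t_i}u(\phi(s))v$ appearing in the criterion are, for analytic $\phi$, controlled by suitable $(C,\alpha)$-good estimates on the interval $I$, uniformly in $i$.

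First I would set up the quantitative machinery. Fix a basis of root vectors so that for each finite-dimensional representation $(V,\rho_V)$ of $G$ coming from an exterior power of the standard representation of $L$ (or rather the representations governing closed orbits of subgroups via the Dani–Margulis / Kleinbock–Margulis formalism), the coordinates of $a_{t_i}u(\phi(s))v$ are, for each fixed $i$, real-analytic functions of $s$ on the compact interval $I=[a,b]$. Since a nonzero real-analytic function on a compact interval is $(C,\alpha)$-good with constants depending only on the function, and since the relevant family here is the family of coordinate functions of $a_t u(\phi(s))v$ as $t$ ranges over $\R$ and $v$ over a compact set of vectors, one needs a \emph{uniform} $(C,\alpha)$-good estimate. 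This uniformity is the technical heart: I would obtain it by noting that $u(\phi(s))$ is polynomial in the coordinates $\phi_j(s)$ of bounded degree, that $a_t$ acts diagonally scaling each coordinate by a fixed exponential weight, and that the analyticity of $\phi$ (hence the non-vanishing of some derivative, by the hypothesis that $\phi(I)$ affinely spans $\R^{n-1}$) gives a lower bound, uniform in $i$, on the sup-norm of each such function over subintervals — this is exactly the kind of estimate established in the work of Kleinbock–Margulis and extended by Kleinbock–Weiss, which I would cite and adapt.

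Next I would invoke the Dani–Margulis nondivergence theorem in the form: there is a compact set $\cF_\epsilon\subset L/\Lambda$ and constants such that \emph{either} $\mu_i(\cF_\epsilon)\geq 1-\epsilon$, \emph{or} there exists a proper subgroup $W$ of $L$ (of the type appearing in the representation-theoretic criterion — a normalizer of a unipotent, with $W\cap\Lambda$ a lattice), a corresponding vector $v_W$ in the associated representation, and a time $i$ such that $\sup_{s\in I}\norm{a_{t_i}u(\phi(s))v_W}$ is small. Passing to a subsequence, the second alternative would produce, in the limit, a nonzero vector $v$ (after normalization, using compactness of the projective space and the uniform $(C,\alpha)$-good bounds to keep $v\neq 0$) with $\lim_i a_{t_i}u(\phi(s))v = 0$ for all $s\in I$, i.e. precisely condition~\eqref{eq:102}. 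But Proposition~\ref{prop:main} (the Basic Lemma) asserts that this is impossible when $\{\phi(s):s\in I\}$ affinely spans $\R^{n-1}$. Hence the second alternative cannot occur for all large $i$, and we are left with $\mu_i(\cF_\epsilon)\geq 1-\epsilon$ for all large $i$, as required.

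\textbf{The main obstacle} I expect is not the structure of the argument — which is the now-standard "nondivergence via Dani–Margulis plus an algebraic non-degeneracy input" scheme — but rather extracting the \emph{uniform-in-$i$} $(C,\alpha)$-good constants for the family $\{s\mapsto a_{t_i}u(\phi(s))v\}$ and, more subtly, ensuring that the normalization step in the limit $i\to\infty$ does not send the relevant vector $v_{W_i}$ off to infinity or to zero along the subsequence (the set of candidate subgroups $W$ is infinite a priori, so one must control the "size" of $v_W$ uniformly, typically by working with primitive integral vectors and the discreteness of $\Lambda$-orbits, or by the compactness argument of Dani–Margulis that reduces to finitely many representations). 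Once that bookkeeping is in place, the contradiction with the Basic Lemma closes the proof.
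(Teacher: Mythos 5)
Your proposal follows essentially the same route as the paper's proof: invoke the Dani--Margulis nondivergence criterion (Proposition~\ref{prop:return} in the text), using the $(C,\alpha)$-good estimates of \S\ref{subsec:CAlpha-1} for analytic curves, and then, if nondivergence fails, extract along a subsequence a nonzero vector $v$ with $u(\phi(s))v\in V^-$ for all $s\in I$, normalizing via the discreteness of $\Lambda\cdot p_W$ (Proposition~\ref{prop:discrete}), and derive a contradiction from the Basic Lemma (Proposition~\ref{prop:main}). The two ``obstacles'' you flag --- uniformity of the $(C,\alpha)$-good constants and normalization of the subgroup vectors --- are precisely what the paper addresses via the finite-dimensional function family $\sF$ and the discreteness of $\Lambda\cdot p_H$, respectively, so your outline is a faithful reconstruction.
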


It may be noted that in the case of $L=G$, $\rho$ the identity map
and $\Lambda=\SL(n,\Z)$, the above result was already proved by
Kleinbock and Margulis~\cite{Klein+Mar:Annals98}. The rest of this
section is devoted to obtaining the same conclusion in the case of
arbitrary $L$ and $\Lambda$. 

\subsection{}
Let $\sH$ denote the collection of analytic subgroups $H$ of $G$ such
that $H\cap\Lambda$ is a lattice in $H$, and a unipotent one-parameter
subgroup of $H$ acts ergodically with respect to the $H$-invariant
probability measure on $H/H\cap\Lambda$.  Then $\sH$ is a countable
collection \cite{Shah:uniform,R:measure}.

Let $\la{L}$ denote the Lie algebra associated to $L$. Let
$V=\oplus_{d=1}^{\dim\la{L}}\wedge^d\la{L}$ and consider the
$(\oplus_{d=1}^{\dim{\la{l}}}\wedge^d\Ad)$-action of $L$ on $V$. Given
$H\in\sH$, let $\la{h}$ denote its Lie algebra, and fix
$p_H\in\wedge^{\dim\la{h}}\la{h}\setminus \{0\}\subset V$. Then
\begin{equation}
  \label{eq:27}
  \Stab_L(p_H)=\noroneL(H):=\{g\in \norL(H):\det((\Ad g)|_{\la{H}})=1\}.
\end{equation}

\begin{proposition}[\cite{Dani+Mar:limit}]
  \label{prop:discrete}
  The orbit $\Lambda \cdot p_H$ is a discrete subset of $V$.  \qed
\end{proposition}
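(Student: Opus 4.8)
The plan is to deduce discreteness of $\Lambda\cdot p_H$ from a general principle: if $W$ is a finite-dimensional $\R$-linear representation of $L$ with a $\Lambda$-invariant lattice-like structure, then $\Lambda$-orbits of points whose stabilizer is a $\Lambda$-cofinite-volume subgroup are discrete. Concretely, I would first observe that since $H\in\sH$, the group $H\cap\Lambda$ is a lattice in $H$, and $\Stab_L(p_H)=\noroneL(H)$ by \eqref{eq:27}. The key point is that $H\cap\Lambda$, being a lattice in $H$, is Zariski-dense in $H$ in the appropriate sense (more precisely, its Zariski closure contains $H$ up to the obstruction coming from non-algebraicity, which is handled by passing to the Lie algebra level); hence $H\cap\Lambda$ already stabilizes $\la{h}$ as a subspace, so $(\Ad\gamma)p_H=\pm p_H$ — in fact equals $p_H$ since $\det((\Ad\gamma)|_{\la h})=1$ for $\gamma$ in a finite-index subgroup — for all $\gamma$ in a finite-index subgroup $\Lambda_0$ of $H\cap\Lambda$.

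Next I would invoke the Borel Harish-Chandra / Dani-Margulis finiteness statement: the set of translates $\{\gamma\, p_H:\gamma\in\Lambda\}$ breaks into finitely many $\Lambda\cap\noroneL(H)$-orbits issue aside, and each point $\gamma p_H = p_{\gamma H\gamma\inv}$ corresponds to a conjugate $\gamma H\gamma\inv$ of $H$, which again lies in $\sH$ and has $\gamma H\gamma\inv\cap\Lambda = \gamma(H\cap\Lambda)\gamma\inv$ a lattice. The real content is a result of Dani and Margulis (this is exactly where the citation \cite{Dani+Mar:limit} does the work): for $H$ with $H\cap\Lambda$ a lattice, the orbit map $\Lambda/(\Lambda\cap\noroneL(H))\to V$, $\gamma(\Lambda\cap\noroneL(H))\mapsto \gamma p_H$, is proper. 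I would reproduce the mechanism: suppose $\gamma_j p_H\to v$ in $V$ with $\gamma_j$ in distinct cosets of $\Lambda\cap\noroneL(H)$. Using that $H\cap\Lambda$ is a lattice, choose a compact set $D\subset H$ with $D(H\cap\Lambda)=H$; then consider the associated sequence of homogeneous subspaces $\gamma_j H\gamma_j\inv$ and the lattices they carry. A bounded sequence of the $p_H$-type vectors, together with the fact that the corresponding subgroups have unipotent elements acting ergodically, forces (via Mahler-type compactness for the lattices $\gamma_j H\gamma_j\inv\cap\Lambda$ inside $\gamma_jH\gamma_j\inv$) that after passing to a subsequence $\gamma_j\in \gamma_1\cdot(\text{compact})\cdot(\Lambda\cap\noroneL(H))$, contradicting distinctness of cosets. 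Hence the orbit is discrete.

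The main obstacle I anticipate is the passage between the Lie-group statement and an algebraic-group statement: the subgroups $H\in\sH$ need not be algebraic, so one cannot directly quote Borel Harish-Chandra. The standard remedy — which I would follow — is to note that $H$ is generated by unipotent one-parameter subgroups together with a semisimple part, so that $\norL(H)$ and $\noroneL(H)$ are real points of algebraic subgroups of the algebraic hull, and that $p_H$ spans a $\Lambda_0$-fixed line for a finite-index $\Lambda_0\leq H\cap\Lambda$; the $\Q$-structure needed for discreteness then comes from $\Lambda_0$ being a lattice. Once this is set up, the discreteness is precisely the conclusion of \cite[Theorem in §3]{Dani+Mar:limit}, and I would simply cite it, since the Proposition as stated attributes the result to that paper. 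I expect the proof in the paper to be a one-line invocation of \cite{Dani+Mar:limit}, and the sketch above is the justification that lies behind that citation.
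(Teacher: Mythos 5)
You correctly identified that the paper's ``proof'' of this Proposition is literally a bare citation of \cite{Dani+Mar:limit} (the \verb|\qed| appears with no argument), and your plan to cite that same result is exactly what the paper does; in that sense your approach is the same. Your supplementary reconstruction of the Dani--Margulis mechanism is in the right spirit --- the crucial ingredients you name are indeed that $H\cap\Lambda$ is a lattice fixing $p_H$ (note $H\subset\noroneL(H)$ since $H$ is unimodular), that each translate $\gamma p_H$ corresponds up to sign to a conjugate $\gamma H\gamma^{-1}\in\sH$, and a compactness/finiteness argument forcing stabilization --- but the precise properness step you sketch via ``Mahler-type compactness for lattices in the varying subgroups $\gamma_j H\gamma_j^{-1}$'' would need more care (you must control both the subgroups in the Chabauty topology and the embedded lattices simultaneously, and then argue that the limiting subgroup lies in the countable class $\sH$ and agrees with $\gamma_j H\gamma_j^{-1}$ for large $j$). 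This imprecision does not affect the correctness of your proposal as a match for the paper, since the paper delegates the entire argument to the cited reference.
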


We may note that when $L$ is a real algebraic group defined over $\Q$
and $\Lambda=L(\Z)$ then the above countability result and the
discreteness of the orbit are straightforward to prove.

\subsection{Functions with growth factor $C$ and growth order
  $\alpha$}
\label{subsec:CAlpha-1}
Let $\sF$ denote the $\R$-span of all the the coordinate functions of
the map $\Upsilon:I\to \End(V)$ given by
$\Upsilon(s)=(\oplus_{d=1}^{\dim{\la{l}}}\wedge^d\Ad)(u(\phi(s)))$ for
all $s\in \R$.  As explained in \cite[\S2.1]{Shah:son1}, due to
\cite[Proposition~3.4]{Klein+Mar:Annals98} the family $\sF$ has the
following growth property for some $C>0$ and $\alpha>0$: for any
subinterval $J\subset I$, $\xi\in\sF$ and $r>0$,
\begin{equation} 
  \label{eq:CAlpha2} 
  \abs{\{s\in J: \abs{\xi(s)}<r\}} <
  C\left(\frac{r}{\sup_{s\in J}\abs{\xi(s)}}\right)^\alpha\abs{J}.
\end{equation}

As a direct consequence of this condition, we have the following
\cite{Klein+Mar:Annals98}: Fix any norm $\norm{\cdot}$ on $V$.

\begin{proposition} 
\label{prop:relative} 
Given any $\epsilon>0$ and $r>0$, there exists $R>0$ such that for any
$h_1,h_2\in L$ and a subinterval $J\subset I$, one of the following
holds:
\begin{enumerate}
\item[I)] $\sup_{t\in J}\norm{h_1u(\phi(t))h_2p_H}<R$.
\item[II)] 
\begin{align*}
\abs{\{t\in J:  \norm{h_1u(\phi(t))h_2 p_H}&\leq r\}}\\
               &\leq \epsilon 
   \abs{\{t\in J:\norm{h_1u(\phi(t))h_2 p_H}\leq R\}}.
\end{align*}
\end{enumerate}
\qed
\end{proposition}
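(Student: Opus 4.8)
The plan is to reduce Proposition~\ref{prop:relative} to a purely one-variable statement about $(C,\alpha)$-good functions and then to run a one-dimensional covering argument; this is the route indicated by \cite{Klein+Mar:Annals98}. \emph{Reduction step.} Fix a basis of $V$. For fixed $h_1,h_2\in L$ and $H\in\sH$, the $V$-valued map $t\mapsto h_1u(\phi(t))h_2p_H$ equals $A\,\Upsilon(t)\,w$, where $A$ is the fixed operator $(\oplus_d\wedge^d\Ad)(h_1)$ on $V$ and $w=(\oplus_d\wedge^d\Ad)(h_2)p_H$ is a fixed vector, both independent of $t$. Hence each coordinate of this map is an $\R$-linear combination of the coordinate functions of $\Upsilon$, so it lies in $\sF$ and is $(C,\alpha)$-good by \eqref{eq:CAlpha2}. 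Since the chosen norm on $V$ is comparable, up to a constant depending only on $\dim V$, to the maximum of the absolute values of the coordinates, I would then invoke the elementary facts that (a) the maximum of finitely many $(C,\alpha)$-good functions is $(C,\alpha)$-good, (b) a scalar multiple of a $(C,\alpha)$-good function is $(C,\alpha)$-good, and (c) a function lying pointwise between $g$ and $\lambda g$ for a $(C,\alpha)$-good $g$ is $(C\lambda^\alpha,\alpha)$-good. These together show that $\psi(t):=\norm{h_1u(\phi(t))h_2p_H}$ is $(C',\alpha)$-good on $I$ with a constant $C'=C'(C,\alpha,\dim V)$ independent of $h_1,h_2,H$ and of the subinterval $J$.

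\emph{The one-variable lemma.} The heart of the matter is the following: for a continuous $(C',\alpha)$-good function $\psi$ on an interval $J$, if $\sup_J\psi\geq R$ then $\abs{\{t\in J:\psi(t)\le r\}}\le 2C'(2r/R)^\alpha\,\abs{\{t\in J:\psi(t)\le R\}}$. Granting this, I would choose $R>r$ with $2C'(2r/R)^\alpha\le\epsilon$: for any $J$ either $\sup_J\psi<R$, which is alternative (I), or $\sup_J\psi\geq R$ and the lemma yields alternative (II). To prove the lemma, write $M=\sup_J\psi=\psi(t^*)$ for some $t^*\in J$ (continuity and compactness), and put $E_r=\{t\in J:\psi(t)\le r\}$ and $E_R=\{t\in J:\psi(t)\le R\}$, so $E_r\subseteq E_R\subseteq J$. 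For $t_0\in E_r$ we have $\psi(t_0)\le r<R\le M$, so $t_0\neq t^*$; assume $t_0<t^*$ (the other case is symmetric), and let $t_1$ be the first point of $(t_0,t^*]$ with $\psi(t_1)=R$ (intermediate value theorem). Then $\psi\le R$ on $I_{t_0}:=[t_0,t_1]\subseteq E_R$ with $\sup_{I_{t_0}}\psi=R$, so the $(C',\alpha)$-good inequality applied on $I_{t_0}$ gives $\abs{E_r\cap I_{t_0}}\le\abs{\{t\in I_{t_0}:\psi(t)<2r\}}\le C'(2r/R)^\alpha\abs{I_{t_0}}$.

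\emph{Covering step.} Among the intervals $I_{t_0}$ with $t_0\in E_r$ and $t_0<t^*$, any two are either essentially disjoint or one contains the other, because $t_1(t_0)$ is the first hitting time of the level $R$ to the right of $t_0$; hence their maximal members are pairwise essentially disjoint and cover $\{t_0\in E_r:t_0<t^*\}$, and likewise for the intervals growing to the left of $t^*$. Summing the bound from the previous paragraph over these maximal intervals — all contained in $E_R$, and of total overlap multiplicity at most $2$ — gives $\abs{E_r}\le 2C'(2r/R)^\alpha\abs{E_R}$, which proves the lemma, and with it the proposition.

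\emph{Expected obstacle.} I anticipate the only genuine subtlety is in the reduction step, namely verifying that the constant $C'$ is truly uniform in $h_1,h_2,H$ (this rests on $w$ and $A$ being arbitrary but \emph{fixed}, and on the norm-comparability constant depending only on $\dim V$). The covering step is routine but requires a little care to see that the one-dimensional selection has multiplicity at most $2$; everything else is a direct application of the growth estimate \eqref{eq:CAlpha2}.
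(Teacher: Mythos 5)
Your proposal is correct and reconstructs the standard Dani--Margulis/Kleinbock--Margulis covering argument, which is precisely what the paper invokes (without proof) by citing \cite{Klein+Mar:Annals98} and stating the proposition as a direct consequence of the $(C,\alpha)$-good property \eqref{eq:CAlpha2}. One small wrinkle in the covering step: the family $\{I_{t_0}\}$ of hitting intervals need not have maximal members, so the cleaner formulation is to decompose $\{t\in J:t\le t^{*},\ \psi(t)\le R\}$ into its connected components, on each of which $\psi\le R$ and $\sup\psi=R$ is attained at the right endpoint; applying \eqref{eq:CAlpha2} component by component and summing gives the same bound (even without the extra factor of $2$).
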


\subsection{The non-diverergence criterion} 

\begin{proposition}
\label{prop:return}
There exist closed subgroups $W_1,\ldots,W_r\in\sH$ (depending only on
$L$ and $\Lambda$) such that the following holds: Given $\epsilon>0$
and $R>0$, there exists a compact set $\cF \subset L/\Lambda$ such
that for any $h_1,h_2\in L$ and a subinterval $J\subset I$, one of
the following conditions is satisfied:
\begin{enumerate}
\item[I)]
There exists $\gamma \in \Lambda$ and $i \in \{1, \ldots,r \}$ such that
\[
\sup_{s \in J} \norm{h_1u(\phi(s))h_2 p_H}< R.\]
\item[II)] 
$\frac{1}{\abs{J}}\abs{\{s \in J : (h_1u(\phi(s))h_2)\Lambda/\Lambda \in
  \cF \}} \geq 1- \epsilon$.
\end{enumerate}
\end{proposition}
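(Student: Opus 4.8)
The plan is to deduce Proposition~\ref{prop:return} from the Dani–Margulis non-divergence criterion, using Proposition~\ref{prop:relative} as the quantitative input on the individual functions $t\mapsto \norm{h_1u(\phi(t))h_2p_H}$. The subgroups $W_1,\dots,W_r$ will be an appropriate finite sub-collection of $\sH$: namely, we fix once and for all a finite set of representatives $W_1,\dots,W_r$ of the $\Lambda$-conjugacy classes of those subgroups in $\sH$ that can arise as ``obstruction subgroups'' for the Dani–Margulis argument in $L/\Lambda$ (this finiteness is where Proposition~\ref{prop:discrete}, i.e.\ the discreteness of $\Lambda\cdot p_H$, is used, together with the countability of $\sH$). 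For each such $W_i$ we use the vector $p_{W_i}\in\wedge^{\dim\la{w_i}}\la{w_i}\setminus\{0\}\subset V$ and the fact that $\Stab_L(p_{W_i})=\noroneL(W_i)$.

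First I would recall the precise form of the Dani–Margulis criterion adapted to the family $\sF$: there is a constant depending only on $(C,\alpha,\dim V)$ such that, for any $h_1,h_2\in L$ and any subinterval $J\subset I$, either the trajectory $\{(h_1u(\phi(s))h_2)\Lambda : s\in J\}$ returns to a fixed compact set for a $(1-\epsilon)$-proportion of $s\in J$, or there is some $\gamma\in\Lambda$, some $W\in\sH$, and a vector $\gamma p_W$ whose norm stays below the relevant threshold on all of $J$. The passage from ``stays small on all of $J$'' to the statement of case~I) is exactly an application of Proposition~\ref{prop:relative}: given $\epsilon$ and the inner radius $r$ coming from the chosen compact set, we get an outer radius $R$; if alternative II) of Proposition~\ref{prop:relative} holds for the function $s\mapsto\norm{h_1u(\phi(s))h_2p_W}$ then this $W$-obstruction actually affects only an $\epsilon$-proportion of $J$ and can be absorbed, so after summing over the finitely many obstruction subgroups we land in case~II) of Proposition~\ref{prop:return}; otherwise alternative I) of Proposition~\ref{prop:relative} holds, which is precisely case~I). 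The key point making this a \emph{finite} list of $W_i$'s is that the norms $\norm{\gamma p_W}$, being values on the discrete set $\Lambda\cdot p_W$ (Proposition~\ref{prop:discrete}), are bounded below, so only finitely many $(\gamma,W)$ up to $\Lambda$-action can be relevant below any fixed threshold.

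Concretely, the steps in order are: (i) fix the growth constants $C,\alpha$ for $\sF$ from \S\ref{subsec:CAlpha-1}; (ii) invoke the Dani–Margulis criterion to produce, for the desired $\epsilon$, a compact set $\cF_0$ and an inner radius $r_0>0$ such that failure of the $(1-\epsilon)$-return to $\cF_0$ forces some $\gamma\in\Lambda$, $W\in\sH$ with $\norm{h_1u(\phi(s))h_2\gamma p_W}<r_0$ for all $s\in J$ — here rewrite $\gamma p_W = p_{W'}$ with $W'=\gamma W\gamma^{-1}\in\sH$; (iii) use discreteness (Proposition~\ref{prop:discrete}) plus a Mahler-type compactness argument to show that only finitely many $W$, up to $\Lambda$-conjugacy, can satisfy such a bound on a nondegenerate interval, and let $W_1,\dots,W_r$ be representatives; (iv) for each $W_i$ apply Proposition~\ref{prop:relative} with this $\epsilon$ and $r=r_0$ to get $R_i$, and set $R=\max_i R_i$ (or feed the given $R$ back in the obvious way); (v) assemble: if for every $i$ the function $s\mapsto\norm{h_1u(\phi(s))h_2p_{W_i}}$ satisfies alternative II) of Proposition~\ref{prop:relative}, then enlarge $\cF_0$ slightly to a compact $\cF$ so that the bad $s$ coming from all the $W_i$'s together occupy at most an $\epsilon$-fraction of $J$, giving case~II); otherwise some $W_i$ gives alternative I), which is case~I).

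I expect the main obstacle to be step~(iii): controlling uniformly over \emph{all} $h_1,h_2\in L$ (and over sub-intervals $J$) which subgroups $W$ can serve as obstructions, so that the list $W_1,\dots,W_r$ genuinely depends only on $L$ and $\Lambda$ and not on $h_1,h_2$ or $J$. The resolution is that the obstruction is detected at the level of the vector $h_1u(\phi(s))h_2\gamma p_W\in V$ lying in the $\Lambda$-orbit $\Lambda\cdot p_W$ only through its $\Lambda$-conjugacy class, the discreteness of which is uniform; the dependence on $h_1,h_2$ is entirely carried by the continuous (indeed polynomial, via $\sF$) functions to which Proposition~\ref{prop:relative} applies with $h_1,h_2$-independent constants $C,\alpha$. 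A secondary technical point is bookkeeping the two radii $r$ and $R$ consistently between Proposition~\ref{prop:relative} and the Dani–Margulis compact set, but that is routine once the finiteness in (iii) is in hand.
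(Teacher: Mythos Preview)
Your overall architecture is correct and is exactly what the paper defers to the literature for: the paper's own proof is a two-sentence pointer to \cite[Theorem~2.2]{Shah:horo}, \cite{DM:asymptotic}, and \cite{Klein+Mar:Annals98}, noting only that the $(C,\alpha)$-growth property of the family $\sF$ (your Proposition~\ref{prop:relative}) replaces the polynomial growth used there. So steps (i), (ii), (iv), (v) of your plan are on target.

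The gap is in step~(iii), and your own diagnosis of it as the main obstacle is right, but the resolution you offer does not work. You propose to extract the finite list $W_1,\dots,W_r$ \emph{a posteriori} from the inequality $\norm{h_1u(\phi(s))h_2\gamma p_W}<r_0$, using discreteness of $\Lambda\cdot p_W$ and a Mahler-type argument. But $h_1,h_2$ range over all of $L$, so the left-hand side carries no uniform information about $\gamma p_W$ itself; for any fixed $W$ and $\gamma$ one can choose $h_1,h_2$ making this quantity as small as one likes on $J$. Discreteness of $\Lambda\cdot p_W$ bounds $\norm{\gamma p_W}$ from below, not the number of $\Lambda$-conjugacy classes of $W$, and the ``dependence on $h_1,h_2$ is carried by the $(C,\alpha)$-functions'' remark controls the \emph{shape} of each individual function, not which $W$'s can occur.

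In the sources the paper cites, the finiteness of $\{W_1,\dots,W_r\}$ is an \emph{a priori} structural fact about the pair $(L,\Lambda)$, coming from reduction theory: the $W_i$ are (essentially) unipotent radicals of a set of representatives for the finitely many $\Lambda$-conjugacy classes of maximal parabolic $\Q$-subgroups of $L$ (equivalently, the finitely many cusps of $L/\Lambda$). The Dani--Margulis inductive scheme then shows that failure of the $(1-\epsilon)$-return must be witnessed by one of these fixed $W_i$'s. So the correct order is: first fix $W_1,\dots,W_r$ from the geometry of $L/\Lambda$ alone, then run your steps (ii), (iv), (v). Once you reorganize (iii) this way, the rest of your outline goes through.
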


\begin{proof}
  The result follows from the argument as in
  \cite[Theorem~2.2]{Shah:horo} using the earlier results of Dani and
  Margulis~\cite{DM:asymptotic}; as well as its extensions due to
  Kleinbock and Margulis~\cite{Klein+Mar:Annals98}. The main
  difference is that instead of growth properties of polynomial
  functions, one uses the similar properties of functions in $\sF$ as
  given by Proposition~\ref{prop:relative}.
\end{proof}

\begin{proof}[Proof of Theorem~\ref{thm:return}]
  Take any $\epsilon>0$. Take a sequence $R_k\to 0$ as
  $k\to\infty$. For each $k\in\N$, let $\cF_k\subset\lml$ be a compact
  set as determined by Proposition~\ref{prop:return} for these
  $\epsilon$ and $R_k$. If the theorem fails to hold, then for each
  $k\in\N$ we have $\mu_i(\cF_k)<1-\epsilon$ for all large $i$.
  Therefore after passing to a subsequences of $\{\mu_i\}$, we may
  assume that $\mu_i(\cF_i)<1-\epsilon$ for all $i$. Then by
  Proposition~\ref{prop:return}, after passing to a subsequence, we
  may assume that there exists $W\in\sH$ such that for each $i$ there
  exists $\gamma_i\in\Lambda$ such that
\begin{equation}
  \label{eq:65}
  \norm{\sup_{s\in I}a_{t_i}u(\phi(s))\gamma_ip_W}\leq
  R_i\toinfty 0.
\end{equation}
By Proposition~\ref{prop:discrete}, there exists $r_0>0$ such that
$\norm{\gamma_ip_W}\geq r_0$ for each $i$. We put
$v_i=\gamma_ip_W/\norm{\gamma_i p_W}$. Then $v_i\to v\in V$ and
$\norm{v}=1$. Therefore
\begin{equation}
  \label{eq:66}
  \sup_{s\in I}\norm{a_{t_i}u(\phi(s))v_i}\leq R_i/r_0\to 0 \quad
  \text{as $i\to\infty$}.
\end{equation}

Define
\begin{align}
  \label{eq:68}
  V^-&=\{w\in V:\lim_{t\to\infty} a_tw=0\}\\
  V^+&=\{w\in V:\lim_{t\to\infty} (a_t)\inv w=0\}\\
  V^0&=\{w\in V: a_tw=w, \quad\forall t\in\R\}.
\end{align}
Since $\{a_t\}$ acts on $V$ via commuting $\R$-diagonalizable
matrices, we have that $V=V^+\oplus V^0\oplus V^-$. Let $\pi_0:V\to
V^0$ denote the associated projection.  Then from \eqref{eq:66} we
conclude that
  \begin{equation}
    \label{eq:67}
    u(\phi(s))v\subset V^-, \quad\forall s\in I.
  \end{equation}
  
  The `Basic Lemma' (Proposition~\ref{prop:main}) proved in the next
  section states that for any finite dimensional linear representation
  $V$ of $G$, any $v\in V\setminus\{0\}$ and any $\cB\subset \R^{n-1}$
  which is not contained in a proper affine subspace, if
\begin{equation}
  \label{eq:95}
  u(\phi(e))v\in V^-+V^0, \quad \forall e\in\cB,
\end{equation}
then
\begin{equation}
  \label{eq:96}
  \pi_0(u(\phi(e))v)\neq 0,\quad \forall e\in\cB.
\end{equation}

By our hypothesis \eqref{eq:67} implies \eqref{eq:95} but contradicts
its consequence \eqref{eq:96}. 
\end{proof}

As a consequence of Theorem~\ref{thm:return} we deduce the following:

\begin{corollary} \label{cor:mu-return} After passing to a
  subsequence, $\mu_i\to\mu$ in the space of probability measures on
  $\lml$ with respect to the weak-$\ast$ topology.
\end{corollary}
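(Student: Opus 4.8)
The plan is to combine the uniform non-divergence supplied by Theorem~\ref{thm:return} with a routine weak-$\ast$ compactness argument. First I would use that $\lml$ is a locally compact, second countable Hausdorff space, so $\Cc(\lml)$ is separable in the supremum norm; fix a countable dense subset $\{f_j\}_{j\in\N}$. Since each $\mu_i$ is a probability measure we have $\abs{\int f_j\,d\mu_i}\leq \norm{f_j}_\infty$ for all $i$, so a diagonal argument over $j$ lets us pass to a subsequence of $\{\mu_i\}$ along which $\int f_j\,d\mu_i$ converges for every $j$, and hence $\int f\,d\mu_i$ converges for every $f\in\Cc(\lml)$. The map $f\mapsto \lim_i\int f\,d\mu_i$ is then a positive linear functional on $\Cc(\lml)$ of norm at most $1$, so by the Riesz representation theorem it is integration against a positive Radon measure $\mu$ on $\lml$ with $\mu(\lml)\leq 1$; equivalently $\mu_i\to\mu$ in the weak-$\ast$ topology.

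It then remains only to rule out escape of mass, i.e.\ to show $\mu$ is a probability measure, and this is exactly the point at which Theorem~\ref{thm:return} is used. Given $\epsilon>0$, let $\cF\subset\lml$ be the compact set furnished by that theorem, so that $\mu_i(\cF)\geq 1-\epsilon$ for all large $i$. By Urysohn's lemma pick $f\in\Cc(\lml)$ with $0\leq f\leq 1$ and $f\equiv 1$ on $\cF$. Then
\[
\int_{\lml} f\,d\mu=\lim_{i\to\infty}\int_{\lml} f\,d\mu_i\geq \liminf_{i\to\infty}\mu_i(\cF)\geq 1-\epsilon,
\]
so $\mu(\lml)\geq 1-\epsilon$. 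Letting $\epsilon\to 0$ gives $\mu(\lml)\geq 1$, and combined with $\mu(\lml)\leq 1$ this yields $\mu(\lml)=1$.

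There is no genuine obstacle in this deduction: the substantive content is entirely contained in the tightness statement of Theorem~\ref{thm:return}, and the diagonal extraction and Riesz representation steps are standard. The one point deserving a word of care is that weak-$\ast$ limits of probability measures on a non-compact space may a priori lose mass — which is precisely the phenomenon excluded by the uniform non-divergence, as used above.
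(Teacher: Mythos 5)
Your proof is correct and is exactly the standard argument the paper leaves implicit here; the paper gives no proof of Corollary~\ref{cor:mu-return}, but in the analogous Corollary~\ref{cor:limit} it invokes the same idea phrased via the one-point compactification of $\lml$, which is the same extraction-plus-tightness reasoning you carry out directly with Riesz representation and Theorem~\ref{thm:return}.
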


Before we proceed further from here, we will give a proof of the Basic
lemma and obtain its consequence, which will be used in the later
sections.

\section{Dynamics of linear actions of intertwined $\SL(2,\R)$'s}
\label{sec:basic}

A triple $(X,H,Y)$ of elements of a Lie algebra is called an
\emph{$\la{SL}_2$-triple\/} if
\begin{equation}
  \label{eq:2}
  [X,Y]=H, \quad [H,X]=2X, \quad [H,Y]=-2Y.
\end{equation}

The following observation on linear dynamics of $\SL_2$-action
played a crucial role in understanding limiting distributions of
expanding translates of curves under the geodesic flows on hyperbolic
manifolds \cite{Shah:son1}.

\newcommand{\citone}{{\cite[Lemma~2.3]{Shah:son1}}}
\begin{lemma}[\citone]
  \label{lema:sl2}
  Let $W$ be a finite dimensional irreducible representation of an
  $\la{SL}_2$-triple $(X,H,Y)$. Let $W^-$ (respectively, $W^+$) denote
  the sum of strictly negative (respectively, strictly positive)
  eigenspaces of $H$. Let $\pi_{+}:W\to W^+$ denote the projection
  parallel to the eigenspaces of $H$. Then 
  \begin{equation}
    v\in W^-\setminus\{0\}\implies \pi_+(\exp(X)v)\neq 0.
  \end{equation}
  \qed
\end{lemma}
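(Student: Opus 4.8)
The plan is to reduce everything to the polynomial model of irreducible $\la{SL}_2$-representations, where the assertion turns into a transparent statement about divisibility by a power of a variable.

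Set $m=\dim W-1$. Since $W$ is irreducible, I would identify it with the space of homogeneous polynomials of degree $m$ in two variables $x,y$, on which the triple acts by $H=x\partial_x-y\partial_y$, $X=x\partial_y$, $Y=y\partial_x$ (one checks directly that this is an $\la{SL}_2$-triple). Then $x^{m-i}y^i$ is an $H$-eigenvector of eigenvalue $m-2i$, so $W^+=\bigoplus_{i<m/2}\C\,x^{m-i}y^i$, while $W^0\oplus W^-=\bigoplus_{i\ge\lceil m/2\rceil}\C\,x^{m-i}y^i$ is exactly the space of degree-$m$ forms divisible by $y^{\lceil m/2\rceil}$; dually, $W^-$ consists of the forms of the shape $y^{\lfloor m/2\rfloor+1}Q$ with $Q$ homogeneous of degree $e:=\lceil m/2\rceil-1\ge 0$ (for $m=0$ one has $W^-=0$ and there is nothing to prove). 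Since $\pi_+$ is the projection onto $W^+$ parallel to the $H$-eigenspaces, its kernel is $W^0\oplus W^-$, so for $R\in W$ one has $\pi_+(R)\neq 0$ if and only if $y^{\lceil m/2\rceil}\nmid R$.

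Next I would use that $\exp(X)=\exp(x\partial_y)$ acts by the substitution $f(x,y)\mapsto f(x,y+x)$. Given $v\in W^-\setminus\{0\}$, write $v=y^{\lfloor m/2\rfloor+1}Q(x,y)$ with $Q\neq 0$ homogeneous of degree $e$; then $\exp(X)v=(y+x)^{\lfloor m/2\rfloor+1}\,Q(x,y+x)$. Here $Q(x,y+x)$ is a nonzero form of degree $e$ (the substitution $z\mapsto z-x$ is invertible on polynomials), so its $y$-adic order is at most $e$; and $y+x$ is coprime to $y$ in $\C[x,y]$, so $(y+x)^{\lfloor m/2\rfloor+1}$ has $y$-order $0$. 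By additivity of the $y$-adic valuation, $\operatorname{ord}_y(\exp(X)v)\le e=\lceil m/2\rceil-1<\lceil m/2\rceil$. Hence $y^{\lceil m/2\rceil}\nmid\exp(X)v$, i.e.\ $\pi_+(\exp(X)v)\neq 0$. (The computation is over $\C$, but the conclusion over $\R$ is immediate, since divisibility by a power of $y$ does not depend on the ground field.)

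I expect the one genuinely nontrivial point to be the passage from the lowest weight vector to a general $v\in W^-$: for the lowest weight vector $v_0$ one sees at once that $X^m v_0$ is a nonzero highest weight vector, so $\pi_+(\exp(X)v_0)\neq 0$, whereas the naive analogue for general $v$ — inspecting the highest weight component of $\exp(X)v$ — can vanish through cancellation among the coefficients of $v$. The role of the polynomial reformulation is precisely to isolate the ``bulk'' factor $(y+x)^{\lfloor m/2\rfloor+1}$, which is invisible to the $y$-order, so that the whole conclusion is forced by the elementary degree bound on the small form $Q(x,y+x)$.
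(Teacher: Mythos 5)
The paper itself does not prove this lemma; it simply cites \cite[Lemma~2.3]{Shah:son1} and closes with $\square$, so there is no in-text argument to compare against. Your proof is correct and self-contained, and it is a genuinely pleasant route: by realising the irreducible module as degree-$m$ binary forms with $X=x\partial_y$, you convert $\exp(X)$ into the unipotent substitution $y\mapsto y+x$ and the statement ``$\pi_+(\cdot)\neq 0$'' into ``$y$-adic order $<\lceil m/2\rceil$.'' The decisive step --- noting that $W^-$ consists of forms divisible by $y^{\lfloor m/2\rfloor+1}$ and that after the substitution the factor $(y+x)^{\lfloor m/2\rfloor+1}$ is prime to $y$, so the $y$-order of the image is bounded by $\deg Q=\lceil m/2\rceil-1$ --- is exactly what defeats the potential cancellation that would plague a naive inspection of the top-weight coefficient. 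Your verification that the triple $(x\partial_y, x\partial_x-y\partial_y, y\partial_x)$ satisfies the $\la{sl}_2$ relations, that $Q(x,y+x)$ remains nonzero and homogeneous of degree $e$, and that the dichotomy ``$\pi_+(R)\neq 0\iff y^{\lceil m/2\rceil}\nmid R$'' holds, are all correct, as is the remark that $y$-divisibility is insensitive to extending scalars from $\R$ to $\C$. This is a valid alternative to the usual argument, which proceeds by choosing a weight basis $w_{-k},Xw_{-k},\dots$ inside an irreducible summand and bounding the position of the leading coefficient of $\exp(X)v$; your valuation-theoretic phrasing avoids any bookkeeping of binomial coefficients entirely.
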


The main goal of this section is to obtain a similar result on linear
dynamics of $\SL(n,\R)$-actions by considering intertwined actions of
copies of $\SL(2,\R)$'s contained in $\SL(n,\R)$.

\subsubsection{Notation}
Let $\cA=\diag((n-1),-1,\dots,-1)\in\la{sl}(n,\R)$. Then
$a_t=\exp(t\cA)$ for all $t\in\R$. Define $A=\{a_t:t\in\R\}$ and
$\la{a}=\Lie(A)=\R\cdot \cA$. Consider a linear representation of $G$
on a finite dimensional vector space $V$.  For $\mu\in\R$, define
  \begin{equation}
    \label{eq:1}
    V^\mu =\{v\in V:\cA v=\mu v\}.
  \end{equation}
  Let $\pi_\mu:V\to V^{\mu}$ denote the projection parallel to the
  eigen spaces of $\cA$. Put
  \begin{align}
    \label{eq:Vpm}
    V^-&= \sum_{\mu<0} V^\mu, \quad V^+=\sum_{\mu>0} V^\mu\\
    \pi_-=&\sum_{\mu<0} \pi_{\mu},\quad \pi_+=\sum_{\mu>0} \pi_{\mu}.
  \end{align}

  An {\em affine basis\/} of $\R^{n-1}$ is a set $\cB\subset\R^{n-1}$
  such that for any $e\in\cB$, the set
  $\{e'-e:e'\in\cB\setminus\{e\}\}$ is a basis of $\R^{n-1}$.

  \begin{proposition}[Basic Lemma]
    \label{prop:main}
    Let the notation be as above. Given an affine basis $\cB$ of
    $\R^{n-1}$ and a nonzero vector $v\in V$, suppose that
    \begin{equation}
      \label{eq:69}
      u(e)v\in V^0+V^-, \quad \forall e\in\cB.
    \end{equation}
    Then
    \begin{equation}
      \label{eq:70}
      \pi_0(u(e)v)\neq 0,\quad \forall e\in\cB.
    \end{equation}
  \end{proposition}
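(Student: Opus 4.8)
The plan is to reduce the statement about $\SL(n,\R)$-dynamics to the $\SL(2,\R)$-result of Lemma~\ref{lema:sl2} by choosing, for each $e\in\cB$, a suitable copy of $\SL(2,\R)$ inside $G$ and an $\la{SL}_2$-triple adapted to the grading induced by $\cA$. The natural choice is to look at the unipotent element $u(e)$: since the image of the first row of $u(\cdot)$ runs over $1\times\R^{n-1}$, each $u(e)$ with $e\neq 0$ is a highest root unipotent, and $\cA$ acts on the Lie algebra element $X_e:=\log u(e)$ with eigenvalue $n$. One completes $X_e$ to an $\la{SL}_2$-triple $(X_e,H_e,Y_e)$ with $H_e\in\cen(A)$-related semisimple element; the key point is that $H_e$ and $\cA$ are simultaneously diagonalizable (both lie in a common Cartan subalgebra once we conjugate $e$ to $e_1$ by an element of $\cen(A)\cong A\times\SL(n-1,\R)$). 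Under this identification the $\cA$-grading $V=V^+\oplus V^0\oplus V^-$ and the $H_e$-grading are compatible, and on each $H_e$-isotypic piece the eigenvalue of $\cA$ is a monotone (affine) function of the eigenvalue of $H_e$.

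First I would make the reduction to a single $e$: it suffices to show that if $v\in V\setminus\{0\}$ satisfies $u(e)v\in V^0+V^-$ for all $e$ in an affine basis $\cB$, then $\pi_0(u(e_0)v)\neq 0$ for each fixed $e_0\in\cB$. After translating by $u(e_0)$ (replacing $v$ by $u(e_0)v$ and $\cB$ by $\cB-e_0$, noting $u$ is additive), I may assume $e_0=0$, so the task becomes: if $v\in V^0+V^-$ and $u(e)v\in V^0+V^-$ for all $e$ in a basis $\{e_1,\dots,e_{n-1}\}$ of $\R^{n-1}$, then $\pi_0(v)\neq 0$. Suppose for contradiction that $\pi_0(v)=0$, i.e.\ $v\in V^-\setminus\{0\}$. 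Then I want to use the basis directions $e_j$ to manufacture a violation of Lemma~\ref{lema:sl2}. The idea: conjugating by an element $z\in\cen(A)$ so that $z e_j$ becomes a multiple of a fixed $e_1$ does not change the $\cA$-grading (since $z$ centralizes $A$), so $z v\in V^-$, and $u(e_1)(zv)\in V^0+V^-$; applying Lemma~\ref{lema:sl2} with the standard $\la{SL}_2$-triple whose nilpositive element is a multiple of $\log u(e_1)$, and observing that on each irreducible $H_{e_1}$-subrepresentation the strictly-negative $\cA$-eigenspaces sit inside the nonpositive $H_{e_1}$-eigenspaces, I would get that $\pi_+^{H}(\exp(X_{e_1})zv)\neq 0$, forcing a strictly positive $H_{e_1}$-component, hence (by compatibility of the gradings) a strictly positive $\cA$-component of $u(e_1)zv$, i.e.\ $\pi_+(u(e_1)v)\neq 0$.

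That alone is not a contradiction — it only tells us that $u(e)v$ has nonzero $V^+$-part for each single $e$, whereas the hypothesis says $u(e)v\in V^0+V^-$ only for $e$ in the basis $\cB$, and these are different $e$'s. So the real argument must be genuinely multidimensional: I would expand $u(te)v$ as a polynomial in $t$ (Lie–theoretically, $u(te)v=\exp(tX_e)v=\sum_j \tfrac{t^j}{j!}X_e^j v$), note that the leading term as $t\to\infty$ is governed by the top power $X_e^{m}v$ where $m$ is the largest integer with $X_e^m v\neq 0$, and track its $\cA$-weight. The content of Lemma~\ref{lema:sl2} is exactly that this top term lands in $V^+$ whenever $v\in V^-$ and the triple is irreducible; in the reducible case one decomposes and argues on the relevant component. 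The hypothesis $u(e)v\in V^0+V^-$ for the $n-1$ basis directions then pins down $X_{e_j}v$ strongly — in fact it should force $X_{e_j}v\in V^0+V^-$ and, pushing harder, that $v$ generates a $G$-subrepresentation on which $\cA$ is nonpositive, which for a semisimple $G$ forces $v\in V^0$ (the only $G$-subrepresentations with no positive $\cA$-weights are trivial ones, since a nontrivial one is self-dual-ish and symmetric under the Weyl element $s_k$ which negates the weight of $\cA$). That contradicts $v\in V^-\setminus\{0\}$.

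\textbf{The main obstacle} I anticipate is precisely the passage from "single-direction" information to the simultaneous/affine-basis conclusion: Lemma~\ref{lema:sl2} is about one $\SL(2)$ at a time, but the hypothesis mixes $n-1$ non-commuting unipotents $u(e_j)$, and the various $\la{SL}_2$-triples $(X_{e_j},H_{e_j},Y_{e_j})$ do not share a common grading. The technical heart is therefore to set up a single Cartan/weight decomposition of $V$ refined enough that \emph{all} the relevant projections $\pi_0,\pi_\pm$ and the $H_{e_j}$-eigenspace decompositions are compatible, and then to run an induction on $\dim V$ or on the number of basis vectors used — peeling off one direction, applying the $\SL(2)$-lemma, and feeding the surviving component back into the hypothesis for the remaining directions. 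Getting the bookkeeping of weights right (so that "nonpositive $\cA$-weight" is genuinely preserved under each step and the $s_k$-symmetry delivers the final contradiction) is where the care is needed.
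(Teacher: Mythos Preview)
Your reduction (translate so that $e_0=0$, replace $v$ by $u(e_0)v$, and aim to show $\pi_0(v)\neq 0$ assuming $v\in V^0+V^-$ and $u(e_j)v\in V^0+V^-$ for a linear basis $\{e_1,\dots,e_{n-1}\}$) is exactly how the paper begins, and you correctly locate the obstacle: Lemma~\ref{lema:sl2} handles one $\la{sl}_2$ at a time, while the $n-1$ triples $(X_{e_j},H_{e_j},Y_{e_j})$ you build have \emph{non-commuting} semisimple parts, so there is no single grading compatible with all of them.

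The gap is in your proposed resolution. The step ``pushing harder, $v$ generates a $G$-subrepresentation on which $\cA$ is nonpositive'' is not justified by the hypotheses and is in fact where all the work lies. Knowing $u(e_j)v\in V^0+V^-$ for each basis direction does \emph{not} by itself control $X_{e_j}X_{e_k}v$, nor products with the centralizer or with negative root vectors, so you have no handle on the $\cA$-weights of $U(\la{g})v$. Your $s_k$/self-duality remark, and the suggested induction on $\dim V$ or on the number of basis vectors, are both too vague to close this; in particular $s_k$ does not negate $\cA$-weights (it sends $a_t$ to $a'_t$, a different one-parameter subgroup), so that symmetry does not give what you want.

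What the paper does instead is a specific algebraic device you are missing. Rather than completing each $X_{e_j}$ to an $\la{sl}_2$-triple independently, it takes the \emph{dual basis} $\{f_1,\dots,f_{n-1}\}$ to $\{e_1,\dots,e_{n-1}\}$ and sets $Y_i=Y(f_i)$, $H_i=[X_i,Y_i]$. The point of the dual basis is the identity $\sum_i f_ie_i=I_{n-1}$, which yields
\[
\cA \;=\; H_1+\dots+H_{n-1},
\]
with the $H_i$ spanning a single Cartan $\la{b}$. Now one decomposes $V$ by $\la{b}$-weights $\delta$, picks the top $\cA$-weight $\mu_0\le 0$ occurring in $v_0=u(e_0)v$, and for each $\delta$ with $q_\delta(\pi_{\mu_0}(v_0))\neq 0$ applies Lemma~\ref{lema:sl2} inside an irreducible $(\la{b}+\la{g}_i)$-summand to get $\delta(H_i)\ge 0$ for every $i$. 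Summing gives $\mu_0=\delta(\cA)=\sum_i\delta(H_i)\ge 0$, hence $\mu_0=0$; a second application of Lemma~\ref{lema:sl2} on the same summand then gives $\pi_0(u(e_i)v_0)\neq 0$. So the multidimensional step is not an induction but a one-shot inequality $\mu_0=\sum_i\delta(H_i)\ge 0$, made possible precisely by the dual-basis identity $\cA=\sum_i H_i$. This is the idea your sketch is lacking.
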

  
\begin{proof}
  By \eqref{eq:69} there exists $\mu_0\leq 0$ and $e_0\in\cB$ such
  \begin{align}
    \label{eq:mu0}
    \pi_{\mu_0}(u(e_0)v)&\neq 0, \quad \text{and}\\
    \label{eq:7}
    \pi_{\mu}(u(e)v)&=0,\quad \text{$\forall \mu>\mu_0$ and $\forall
      e\in\cB$}.
  \end{align}

  We write the basis $\{e-e_0:e\in\cB\setminus\{e_0\}\}$ 
  $\R^{n-1}$ as $\{e_1,\dots,e_{n-1}\}$. Put $v_0=u(e_0)v$. Then
  \begin{align}
    \label{eq:3}
    \pi_{\mu_0}(v_0)&\neq 0\quad\text{and} \\
    \label{eq:3b}
    \pi_{\mu}(u(e_i)v_0)&=0\quad \text{for all $\mu>\mu_0$ and
      $1\leq i\leq n-1$}.
  \end{align}
  To prove \eqref{eq:70} we need to show that
  \begin{align}
    \label{eq:109}
    \mu_0&=0 \quad \text{and} \\
    \label{eq:109b}
    \pi_0(u(e_i)v_0)&\neq 0, \quad\forall 1\leq i\leq n-1.
  \end{align}

  Let the set $\{f_1,\dots,f_{n-1}\}$ consisting of real $((n-1)\times
  1)$ column matrices be the dual to the basis $\{e_1,\dots,e_{n-1}\}$
  of $\R^{n-1}$ consisting of $(1\times (n-1))$-row matrices; that is,
\begin{equation}
  \label{eq:4}
  e_if_j =\delta_{i,j}, \quad\forall\, i,j\in \{1,\dots,n-1\},
\end{equation}
For $i\in\{1,\dots,n-1\}$, 
let 
\begin{equation}
  \label{eq:97}
 X_i:=X(e_i)=
\left[
\begin{smallmatrix} 
0 & e_i \\
  & \vzero_{n-1}
\end{smallmatrix}
\right] \quad \text{and} \quad
Y_i:=Y(f_i)=
\left[
  \begin{smallmatrix}
    0  & \\
    f_i&\vzero_{n-1}
  \end{smallmatrix}
\right],
\end{equation}
where $\vzero_{n-1}$ is the $((n-1)\times(n-1))$-zero matrix. Then
$u(e_i)=\exp(X_i)$. Let
\begin{equation}
  \label{eq:5}
  H_i:=[X_i,Y_i]= 
  \left[ 
    \begin{smallmatrix}
      e_if_i & \\
      & - f_ie_i
    \end{smallmatrix}
  \right]
  =
  \left[
    \begin{smallmatrix}
      1 & \\
      & -f_ie_i
    \end{smallmatrix}
  \right]\in\la{sl}(n,\R).
\end{equation}
Then $(X_i,H_i,Y_i)$ is an $\la{SL}_2$-triple. Let
$\la{g}_i=\Span\{X_i,H_i,Y_i\}\subset\la{sl}(n,\R)$. By \eqref{eq:4}
\begin{equation}
  \label{eq:90}
  \sum_{i=1}^{n-1}
  f_ie_i = I_{n-1},  
\end{equation}
the $((n-1)\times (n-1))$-identity matrix. Therefore
\begin{equation}
  \label{eq:6}
  \cA=H_1+\dots+H_{n-1}.
\end{equation}
Also
\begin{equation}
  \label{eq:72}
  \la{b}=\Span\{H_i:i=1,\dots,n-1\}
\end{equation}
is a maximal $\R$-diagonalizable subalgebra of
$\la{SL}(n,\R)$. Moreover
\begin{equation}
  \label{eq:73}
  [H,\la{g_i}]=\la{g_i},\quad \forall H\in\la{b}.
\end{equation}
Thus $\la{b}+\la{g_i}$ is a reductive Lie algebra which is isomorphic
to $\R^{n-2}\oplus \la{SL}_2$. Note that the Lie groups associated to
these $\la{g}_i$'s are our intertwined copies of $\SL_2$'s, and we
want to study their joint linear dynamics.

For a linear functional $\delta\in\la{b}^\ast$, let
\begin{equation}
  \label{eq:91}
  V(\delta)=\{v\in V:Hv=\delta(H)v\}.
\end{equation}
The set $\Delta=\{\delta\in\la{b}^\ast:V(\delta)\neq 0\}$ is
finite and $V=\oplus_{\delta\in\Delta} V(\delta)$. Let
$q_\vlambda:V\to V(\delta)$ be the associated projection. 
  
\begin{claim} \label{claim:nonneg} Let $\delta\in\Delta$ be such that
  $v(\delta):=q_\delta(\pi_{\mu_0}(v_0))\neq 0$.  Then
  $\delta(H_i)\geq 0$ for all $1\leq i\leq n-1$.
  \end{claim}

  To prove the claim, take any $1\leq i\leq n-1$.  Consider the
  decomposition
  \begin{equation}
    \label{eq:12}
    V=W_1\oplus \dots \oplus W_s,
  \end{equation}
  where $W_j$'s are irreducible subspaces for the action of the Lie
  subalgebra $\la{b}+\la{g}_i$ and $s\in\N$. Therefore each $W_j$ is
  an irreducible representation of the $\la{SL}_2$-triple
  $(X_i,H_i,Y_i)$. Let $P_j:V\to W_j$ denote the associated
  projection. We note that
  \begin{align}
    \label{eq:13a}
    \pi_\mu\circ P_j & = P_j\circ \pi_\mu,\quad \text{for all $1\leq
      j\leq s$ and $\mu\in\R$, and } \\
    \label{eq:13b}
    q_\delta\circ P_j &= P_j\circ q_\delta, \quad \text{for all $1\leq
      j\leq s$.} 
  \end{align}

  There exists $1\leq j\leq s$ such that
  \begin{equation}
    \label{eq:24}
    P_j(v(\delta))\neq 0,
  \end{equation}
  we take any such $j$. In particular, by \eqref{eq:13a} and
  \eqref{eq:13b},
  \begin{equation}
    \label{eq:18}
    W_j\cap V_{\mu_0}\ni P_j(\pi_{\mu_0}(v_0))\neq 0.
  \end{equation}
  By the standard description of finite dimensional representations of
  $\la{SL}_2$, let $k\geq 0$ and $w_{-k}\in W_j$ be such that
  \begin{equation}
    \label{eq:14}
    Y_i\cdot w_{-k}=0 \quad \text{and}\quad H_i\cdot w_{-k}=-k\cdot w_{-k}.  
  \end{equation}
  For any $r\geq 0$, put $w_{-k+2r}:=X_i^r\cdot w_{-k}$. Then
  \begin{equation}
    \label{eq:16}
    H_i\cdot w_{-k+2r}=(-k+2r)w_{-k+2r}
  \end{equation}
  and $W_j=\Span\{w_{-k},\dots,w_k\}$. Since $[H_i,\la{b}]=0$ and
  $W_j$ is $\la{b}$-invariant, for each $0\leq r\leq k$, there exists
  $\delta_r\in \Delta$ such that $w_{-k+2r}\in V(\delta_r)$ and
\begin{equation} 
\label{eq:lambda_r}  
\delta_r\neq \delta_{r'},\quad\text{if $r\neq r'$}. 
\end{equation}  

Put $\lambda=\delta_0(\cA)$. Then
  \begin{equation}
    \label{eq:15}
    \cA\cdot w_{-k}=\lambda\cdot w_{-k}.
  \end{equation}
Since $[\cA,X_i]=n$, we have
\begin{equation}
  \cA\cdot w_{-k+2r}=(\lambda+nr) w_{-k+2r},\quad\forall 0\leq r\leq k.
\end{equation}
Thus, if $P_j(V_\mu)\neq 0$ for any $\mu$, then $P_j(V_\mu)\subset
\R\cdot w_{-k+2r}$ for some $r\geq 0$ such that
$\lambda+nr=\mu$.

Therefore by \eqref{eq:18} there exists $r_0\geq 0$ such that
\begin{align}
  \label{eq:19}
  \mu_0&=\lambda+nr_0 \quad \text{and} \\
    \label{eq:10}
   W_j\cap V_{\mu_0}&=\R\cdot w_{-k+2r_0}.
  \end{align}
  Recall that $u(e_i)=\exp(X_i)$. By \eqref{eq:3b} and \eqref{eq:13a},
  for all $\mu>\mu_0$, we have
  \begin{align}
    \label{eq:20}
    \pi_\mu(P_j(v_0))&=P_j(\pi_\mu(v_0))=0,\quad \text{and}\\
    \pi_\mu(\exp(X_i)P_j(v_0))&=\pi_\mu(P_j(\exp(X_i)v_0))=
    P_j(\pi_\mu(\exp(X_i)v_0))=0.
  \end{align}
  Therefore
  \begin{equation}
    \label{eq:21}
    \{P_j(v_0),\exp(X_i)P_j(v_0)\}\subset \Span\{w_{-k},\dots,w_{-k+2r_0}\}.
  \end{equation}
  Therefore by Lemma~\ref{lema:sl2} applied to the $\la{sl}_2$-triple
  $(X_i,H_i,Y_i)$, since $P_j(v_0)\neq 0$, we have
  \begin{equation}
    \label{eq:22}
    -k+2r_0\geq 0.  
  \end{equation}
  
  By \eqref{eq:24}, \eqref{eq:lambda_r} and \eqref{eq:10},
  \begin{equation}
    \label{eq:36}
    0\neq P_j(v(\delta)) = P_j(q_\delta(\pi_{\mu_0}(v_0))) = 
q_\delta(P_j(\pi_{\mu_0}(v_0)))=P_j(\pi_{\mu_0}(v_0)).
  \end{equation}
Also
\begin{equation}
  \label{eq:63}
  H_i(P_j(v(\delta)))=P_j(H_i(v(\delta))) = P_j(\delta(H_i)v(\delta)) = 
  \delta(H_i)\cdot P_j(v(\delta)),
\end{equation}
and 
\begin{equation}
  \label{eq:108}
H_i(P_j(\pi_{\mu_0}(v_0)))=(-k+2r_0)P_j(\pi_{\mu_0}(v_0)).  
\end{equation}
Therefore by \eqref{eq:22}
  \begin{equation}
    \label{eq:25}
    \delta(H_i)=-k+2r_0\geq 0.   
  \end{equation}  
This completes the proof of the Claim~\ref{claim:nonneg}.

Since $\pi_{\mu_0}(v_0)\neq 0$, there exists $\delta\in\Delta$ such
that 
\[
v(\delta):=q_\delta(\pi_{\mu_0}(v_0))\neq 0.
\]
Now since $\cA\cdot v(\delta)=\mu_0 v(\delta)$ and
$\cA=H_1+\dots+H_{n-1}$, by Claim~\ref{claim:nonneg}
  \begin{equation}
    \label{eq:71}
    0\geq \mu_0=\sum_{i=1}^{n-1} \delta(H_i)\geq 0.
  \end{equation}
  Therefore
  \begin{equation}
    \label{eq:111}
    \mu_0=0 \quad\text{and}\quad \delta(H_i)=0, \quad\forall 1\leq
    i\leq n-1.
  \end{equation}
  Thus \eqref{eq:109} is verified. 

  Going back to the representation $W_j$ of $\la{g}_i$ considered
  above, by \eqref{eq:25} and \eqref{eq:111} $-k+2r_0=\delta(H_i)=0$
  and $\cA\cdot w_0=0$. Therefore by \eqref{eq:21}, we have
  \begin{equation}
    \label{eq:75}
    \{P_j(v_0),\exp(X_i)P_j(v_0)\}\subset \Span\{w_{-k},\dots,w_{0}\}.
  \end{equation}
  Therefore, since $P_j(v_0)\neq 0$, by Lemma~\ref{lema:sl2}
  \begin{equation}
    \label{eq:76}
    \exp(X_i)P_j(v_0)\not\subset \Span\{w_{-k},\dots,w_{-2}\}.
  \end{equation}
  Hence by \eqref{eq:10} and \eqref{eq:13a},
  \begin{equation}
    \label{eq:77}
    P_j(\pi_0(\exp(X_i)v_0)) = \pi_0(P_j(\exp(X_i)v_0)) =
    \pi_0(\exp(X_i)P_j(v_0))\neq 0.
  \end{equation}
  Therefore $\pi_0(\exp(X_i)v_0)\neq 0$. Thus \eqref{eq:109b} is
  verified.
\end{proof}

Consider the linear action of $\cen(A)$ on $\R^{n-1}$ such that
\begin{equation}
  \label{eq:80}
  u(g\cdot e)=gu(e)g\inv, \forall g\in \cen(A),\ \forall e\in\R^{n-1}.  
\end{equation}
Note that under this action $\cen(A)$ maps onto $\GL(n-1,\R)$.  We
also note that for any basis $\cC$ of $\R^{n-1}$, the set
\begin{equation}
  D_{\cC}:=\{g\in \cen(A):\text{each $e\in\cC$ is an eigenvector of $g$}\}
\end{equation}
is a maximal $\R$-diagonalizable subgroup of $G$.

\begin{corollary}
  \label{cor:main2}
  Let the notation be as in Proposition~\ref{prop:main}. For any
  $e\in \cB$,
  \begin{equation}
    \label{eq:112}
    g\pi_0(u(e)v)=\pi_0(u(e)v), \quad\forall g\in D_{\cC},
  \end{equation}
  where $\cC=\{e'-e:e'\in\cB\setminus\{e\}\}$.
\end{corollary}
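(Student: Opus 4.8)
The plan is to upgrade the information produced along the way in the proof of Proposition~\ref{prop:main}. Fix $e\in\cB$ and set $w=\pi_0(u(e)v)$; by Proposition~\ref{prop:main}, $w\neq 0$. Since $u(e')v\in V^0+V^-$ for every $e'\in\cB$ by \eqref{eq:69}, the point $e$ may be used in place of the point $e_0$ of that proof: with $v_0:=u(e)v$, $\mu_0:=0$, and the enumeration $\cC=\{e_1,\dots,e_{n-1}\}$, the construction there produces $\la{sl}_2$-triples $(X_i,H_i,Y_i)$, $1\leq i\leq n-1$, with $u(e_i)=\exp(X_i)$, with $\la{g}_i=\Span\{X_i,H_i,Y_i\}$, with $\cA=H_1+\dots+H_{n-1}$, and with $\la{b}:=\Span\{H_1,\dots,H_{n-1}\}$. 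Since the $H_i$ are simultaneously diagonalized in the basis adapted to $\cC$ and $\dim\la{b}=n-1$, one has $\la{b}=\Lie(D_\cC)$ and $\exp(\la{b})=D_\cC^\circ$.

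I would first show that $D_\cC^\circ$ fixes $w$. The argument of Proposition~\ref{prop:main} in fact gives $\delta(H_i)=0$ for \emph{every} $\delta\in\Delta$ with $q_\delta(w)\neq 0$ and every $i$: Claim~\ref{claim:nonneg} yields $\delta(H_i)\geq 0$ for all such $\delta$, while $q_\delta(w)\neq 0$ forces $\delta(\cA)=0$ (because $\cA\in\la{b}$ and $w\in V^0$), so $\sum_i\delta(H_i)=\delta(\cA)=0$ and each summand vanishes. Applying this to $w=\sum_{\delta\in\Delta}q_\delta(w)$ we get $H_i\cdot w=0$ for every $i$, i.e.\ $\la{b}\cdot w=0$, and therefore $D_\cC^\circ=\exp(\la{b})$ fixes $w$.

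Next I would account for the component group. For $1\leq i\leq n-1$ put $\tau_i=\exp(\pi(X_i-Y_i))$. Using $(X_i-Y_i)^3=-(X_i-Y_i)$ one finds $\tau_i=\diag\bigl(-1,\,I_{n-1}-2f_ie_i\bigr)$, which lies in $\cen(A)$ and is $\R$-diagonalizable with eigenvalues $\pm1$ in the bases adapted to $\cC$; hence $\tau_i\in D_\cC$, and the classes of $\tau_1,\dots,\tau_{n-1}$ form a generating set of the component group $D_\cC/D_\cC^\circ\cong(\Z/2)^{n-1}$. Fix $i$ and, as in the proof of Proposition~\ref{prop:main}, decompose $V=W_1\oplus\dots\oplus W_s$ into irreducible $\la{b}+\la{g}_i$-submodules with projections $P_j$. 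From $H_i\cdot w=0$ we get $H_i\cdot P_j(w)=P_j(H_i\cdot w)=0$, so whenever $P_j(w)\neq 0$ the irreducible $\la{sl}_2$-module $W_j$ contains a nonzero $H_i$-eigenvector of eigenvalue $0$, hence is odd-dimensional; since $\tau_i$ corresponds, under $\la{g}_i\cong\la{sl}_2$, to $\exp(\pi\psmat{0&1\\-1&0})=-I$ in $\SL(2,\R)$, it acts on such a $W_j$ as $(-1)^{\dim W_j-1}=1$. Therefore $\tau_i\cdot w=\sum_j\tau_i\cdot P_j(w)=\sum_jP_j(w)=w$ for each $i$. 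Combining this with the previous paragraph, $w=\pi_0(u(e)v)$ is fixed by $D_\cC^\circ$ and by each $\tau_i$, hence by all of $D_\cC$, establishing \eqref{eq:112}.

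I expect the one genuinely delicate point to be the first: extracting from the proof of Proposition~\ref{prop:main} that $\delta(H_i)=0$ — not merely $\geq 0$ — for \emph{all} the relevant $\delta$, which is exactly what places $w$ in the zero weight space of the Cartan subalgebra $\la{b}$. The remaining ingredients (the explicit form of $\tau_i$ and its membership in $D_\cC$, the generation of the component group, and the parity of the $\la{sl}_2$-strings through the zero weight) are routine.
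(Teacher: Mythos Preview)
Your first paragraph is precisely the paper's proof: once Proposition~\ref{prop:main} gives $\mu_0=0$ and $\pi_0(u(e)v)\neq 0$, any $e\in\cB$ may serve as the base point $e_0$, and then Claim~\ref{claim:nonneg} together with $\sum_i\delta(H_i)=\delta(\cA)=0$ forces $\delta(H_i)=0$ for every $\la{b}$-weight $\delta$ contributing to $w$, whence $\la{b}\cdot w=0$. The point you flag as ``genuinely delicate'' is exactly what the paper extracts in \eqref{eq:111} and \eqref{eq:92}.

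The paper, however, stops there and simply asserts $D_\cC=\exp(\la{b})$. Your second paragraph --- handling the component group via $\tau_i=\exp(\pi(X_i-Y_i))$ and the parity of the $\la{sl}_2$-strings through the zero $H_i$-weight --- is correct and addresses a point the paper glosses over, since $D_\cC$ as defined is the full (disconnected) diagonal subgroup in the $\cC$-adapted basis, with $2^{n-1}$ components. That said, in the only downstream use of this corollary (the proof of Corollary~\ref{cor:curve}) one needs the group generated by $D_\cC$ and a conjugate $w(\vx)\inv D_\cC\, w(\vx)$ to contain the connected unipotent group $W=\{w(\vy):\vy\in\R^{n-2}\}$; this is a Lie-algebra computation, so $\exp(\la{b})=D_\cC^\circ$ already suffices there. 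Your extra care is mathematically sound but not required for the sequel.
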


\begin{proof}
  As a consequence of Proposition~\ref{prop:main}, $\mu_0=0$ and
  $\pi_{\mu_0}(u(e)v)\neq 0$. Let the notation be as in the proof of
  Proposition~\ref{prop:main}. Then $u(e)v=v_0$. Now
  $\cC=\{e_1,\dots,e_{n-1}\}$ and for any $i$, we have $H_ie_i=2$ and
  $H_ie_j=e_j$ if $j\neq i$. Therefore by \eqref{eq:72},
  $D_{\cC}=\exp(\la{b})$.

  For any $\delta\in\Delta$, if $q_\delta(\pi_{\mu_0}(v_0))\neq 0$
  then by \eqref{eq:111}
  \begin{equation}
    \label{eq:92}
    H_iq_\delta(\pi_{\mu_0}(v_0))=\delta(H_i)q_\delta(\pi_{\mu_0}(v_0))=0, 
    \quad \forall 1\leq i\leq n-1.
  \end{equation}
  Therefore $H_i\pi_{\mu_0}(v_0)=\sum_{\delta\in\Delta}
  H_iq_\delta(\pi_{\mu_0}(v_0))=0$ for all $i$.  Hence
  $\la{b}\cdot\pi_0(v_0)=0$.  Therefore
  $D_\cC\pi_0(v_0)=\pi_0(v_0)=p_0(u(e)v)$ and we obtain
  \eqref{eq:112}.
\end{proof}

\begin{corollary}
  \label{cor:curve}
  Let a set $\cE\subset \R^{n-1}$ and $e\in\cE$ be such that the set
  $\cE_e:=\{e'-e:e'\in\cE\}$ is not contained a union of $n-1$ proper
  subspaces of $\R^{n-1}$. Suppose that $v\in\R^{n-1}$ is such that
  \begin{equation}
    \label{eq:79}
    u(e')v\in V^0+V^-,\quad \forall e'\in\cE.
  \end{equation}
  Then
  \begin{gather}
    \label{eq:11b}
    \pi_0(u(e)v)\neq 0 \quad\text{and}\\
    \label{eq:11}
    \cen(A)\subset \Stab_G(\pi_0(u(e)v)).
  \end{gather}
\end{corollary}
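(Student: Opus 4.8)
The plan is to reduce both assertions to the Basic Lemma (Proposition~\ref{prop:main}) and Corollary~\ref{cor:main2} by exhibiting enough affine bases lying inside $\cE$. Since $\cE_e$ is not contained in a union of $n-1$ proper subspaces it is in particular not contained in any single one, so $\cE_e$ spans $\R^{n-1}$; choosing $e_1,\dots,e_{n-1}\in\cE_e$ one at a time with $e_j\notin\Span\{e_1,\dots,e_{j-1}\}$ (possible because $\cE_e$ is not contained in that proper subspace) yields a basis $\{e_1,\dots,e_{n-1}\}$ of $\R^{n-1}$ contained in $\cE_e$, so that $\cB:=\{e\}\cup\{e+e_i:1\le i\le n-1\}$ is an affine basis of $\R^{n-1}$ with $\cB\subset\cE$ and $e\in\cB$. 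By hypothesis \eqref{eq:79} we have $u(e')v\in V^0+V^-$ for all $e'\in\cB$, so Proposition~\ref{prop:main} applies to this $\cB$ and gives $\pi_0(u(e')v)\neq0$ for every $e'\in\cB$; taking $e'=e$ proves \eqref{eq:11b}.

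For \eqref{eq:11}, set $w:=\pi_0(u(e)v)\in V^0$. For any basis $\cC=\{c_1,\dots,c_{n-1}\}$ of $\R^{n-1}$ contained in $\cE_e$, the set $\cB_\cC:=\{e\}\cup\{e+c_i:1\le i\le n-1\}$ is again an affine basis contained in $\cE$ with $e$ among its points, and applying Corollary~\ref{cor:main2} to $\cB_\cC$ with distinguished point $e$ shows that the maximal $\R$-diagonalizable subgroup $D_\cC\subset\cen(A)$ attached to $\cC$ fixes $w$; equivalently, its Lie algebra $\la{b}_\cC:=\Lie(D_\cC)=\Span\{H_1^\cC,\dots,H_{n-1}^\cC\}$ -- an $(n-1)$-dimensional Cartan subalgebra of $\Lie(\cen(A))$, with $H_i^\cC$ the matrix $H_i$ of \S\ref{sec:basic} formed from $\cC$ -- annihilates $w$. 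Hence $w$ is annihilated by $\Span\{\la{b}_\cC:\cC\subset\cE_e\text{ a basis of }\R^{n-1}\}$. The main point, which I expect to be the real obstacle, is to show that this span is all of $\Lie(\cen(A))$; granting it, $\Lie(\cen(A))$ annihilates $w$, and since $\cen(A)$ is connected it follows that $\cen(A)$ fixes $w$, which is \eqref{eq:11}.

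To prove the spanning statement I would use trace-duality on $\Lie(\cen(A))$. Let $\phi\in\Lie(\cen(A))$ be trace-orthogonal to every $\la{b}_\cC$; among the resulting relations are $e_i^\cC\phi f_i^\cC=0$ for all $i$ and all bases $\cC\subset\cE_e$, where $\{f_1^\cC,\dots,f_{n-1}^\cC\}$ is dual to $\cC=\{e_1^\cC,\dots,e_{n-1}^\cC\}$ (this uses $H_i^\cC=\diag(1,-f_i^\cC e_i^\cC)$ together with $\cA=\sum_i H_i^\cC\in\la{b}_\cC$ to kill the remaining term; cf.~\eqref{eq:5}--\eqref{eq:6}). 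Fix $c\in\cE_e$; for a basis $\cC\subset\cE_e$ with $c=e_1^\cC$, the dual vector $f_1^\cC$ spans the line annihilated by $e_2^\cC,\dots,e_{n-1}^\cC$, and the relation $c\phi f_1^\cC=0$ forces that line into $\ker(c\phi)$. Were $c\phi\neq 0$, this would place a fixed nonzero vector inside every hyperplane spanned by $n-2$ members of $\cE_e$ that complete $c$ to a basis; but such members can be chosen greedily, each choice only having to avoid a union of two proper subspaces, which the hypothesis on $\cE_e$ permits, and the resulting hyperplane misses the vector -- a contradiction. Hence $c\phi=0$ for every $c\in\cE_e$, and as $\cE_e$ spans $\R^{n-1}$ this forces $\phi=0$, so $\Span\{\la{b}_\cC\}=\Lie(\cen(A))$. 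The only genuinely delicate point in the whole argument is this bookkeeping of which proper subspaces the greedy choices must dodge; everything else is formal.
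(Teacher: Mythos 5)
Your argument is correct, but it takes a genuinely different route from the paper's proof. The paper normalizes a single basis $\{b_1,\dots,b_{n-1}\}\subset\cE_e$ to the standard one via some $z\in\cen(A)$, so that $D_\cC$ becomes the diagonal $D$; it then invokes the hypothesis exactly once more (against the $n-1$ coordinate hyperplanes) to find $e_1'\in\cE_e$ with all coordinates nonzero, producing the conjugate torus $w(\vx)\inv D w(\vx)$, and notes that $D$ together with this conjugate already generates $DW$ for the unipotent group $W=\{w(\vy)\}$; since $W$ is an expanding horospherical subgroup of $\cen(A)$ with contracting companion $W^-$, and $\cen(A)$ is generated by $D$, $W$, $W^-$, the corollary follows from just two explicit tori. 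You instead work on the Lie-algebra level over the \emph{whole} family of bases $\cC\subset\cE_e$: each Cartan $\la{b}_\cC$ annihilates $\pi_0(u(e)v)$ by Corollary~\ref{cor:main2}, and you show by trace-duality that these Cartans together span $\Lie(\cen(A))$, via a greedy completion of an arbitrary $c\in\cE_e$ to a basis in $\cE_e$ whose trailing $n-2$ members span a hyperplane avoiding the fixed covector $c\phi$. That greedy step is sound: at each stage one avoids the two proper subspaces $\Span\{c,e_2,\dots,e_{j-1}\}$ and $\Span\{c\phi,e_2,\dots,e_{j-1}\}$, which the hypothesis permits since $n-1\geq 2$, and the degenerate case $c\phi\in\R c$ is automatic because $c$ itself is kept out of the final hyperplane. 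The paper's version is more economical, getting by with two bases and a concrete group-generation fact; yours is more uniform and isolates a clean linear-algebra spanning statement, at the cost of bookkeeping over many bases. Both are valid.
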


\begin{proof}
  First \eqref{eq:11b} follows from Proposition~\ref{prop:main}.

  Replacing $v$ by $u(e)v$ and $\cE$ by $\cE_e$, without loss of
  generality we may assume that $e=0\in\cE$ and we only need to prove
  that
  \begin{equation}
    \label{eq:89}
    \cen(A)\subset \Stab_G(\pi_0(v)).    
  \end{equation}
  
  By our hypothesis there exist $\{b_1,\dots,b_{n-1}\}\subset\cE$
  which is a basis of $\R^{n-1}$.  Let $\{e_i:i=1,\dots,n-1\}$ denote
  the standard basis of $\R^{n-1}$. We put $e_0=0$. Then there exists
  $z\in \cen(A)$ such that $zb_i=e_i$ for $1\leq i\leq n-1$. Now by
  \eqref{eq:79},
  \begin{equation}
    \label{eq:81}
    z(u(b)v)=(zu(b)z\inv)(zv)=u(z\cdot b)(zv)\subset
    V^0+V^-, \quad \forall b\in\cE.
  \end{equation}
  Also $\pi_0(zw)=z\pi_0(w)$ for all $w\in V$. Therefore to prove the
  result, without loss of generality we can replace $\cE$ with $z\cdot
  \cE$ and $v$ with $zv$, and assume that 
  \begin{equation}
    \label{eq:94}
  \cB:=\{e_i:0\leq i\leq n-1\}\subset \cE.
  \end{equation}
  
  Let $\cC=\{e_1,\dots,e_{n-1}\}$. Let $D$ denote the maximal diagonal
  subgroup of $\SL(n,\R)$. Then $D_{\cC}=D$ and by
  Corollary~\ref{cor:main2},
  \begin{equation}
    \label{eq:83}
    D\subset \Stab(\pi_0(v)).
  \end{equation}

  By our hypothesis, $\cE\setminus\{e_1,\dots,e_{n-1}\}$ is not
  contained in a proper subspace of $\R^{n-1}$. Therefore there exists
  $e_1'\in\cE$ such that
  \begin{equation}
    \label{eq:84}
    e_1'=\sum_{i=1}^{n-1} \lambda_ie_i \quad\text{and}\quad
    \lambda_i\neq 0,\quad\forall\, 1\leq i\leq n-1.
  \end{equation}

  For $\vx=(x_1,\dots,x_{n-2})\in\R^{n-2}$, let 
  \begin{equation}
    \label{eq:8}
w(\vx)=\left[
\begin{smallmatrix}
  1 \\   
    &   1 & x_2 & \dots & x_{n-2} \\
    &     &  1 \\
    &     &     &\ddots \\
    &     &     &        &  1
\end{smallmatrix}\right]
  \end{equation}
  Put
  $\vx=(-\lambda_1\inv\lambda_2,\dots,-\lambda_1\inv\lambda_{n-1})$. Then
  $w(\vx)e_1=\lambda_1\inv e_1'$ and $w(\vx) e_i=e_i$ for
  $i=2,\dots,n-1$. Let $\cC'=\{e_1',e_2,\dots,e_{n-1}\}$. Then
  \begin{equation}
    \label{eq:86}
    D_{\cC'}=w(\vx)\inv D w(\vx).  
  \end{equation}
  
  Since $\cB':=\{0,e_1',e_2,\dots,e_{n-1}\}\subset \cE$, by
  Corollary~\ref{cor:main2} applied to $\cB'$ in place of $\cB$, we
  obtain that
  \begin{equation}
    \label{eq:87}
    w(\vx)\inv D w(\vx)=D_{\cC'} \subset \Stab(\pi_0(v)).
  \end{equation}

  Since each coordinate of $\vx$ is nonzero, the group generated by
  $D$ and $w(\vx)\inv D w(\vx)$ contains
  $W:=\{w(\vy):\vy\in\R^{n-2}\}$. Then by \eqref{eq:83} and
  \eqref{eq:87}, $DW\subset\Stab_G(\pi_0(v))$. Let $\alpha>1$. Then
  $W$ is the expanding horospherical subgroup of $\cen(A)$ associated
  to
  \[
  g_0=\diag(1,\alpha^{n-2},\alpha\inv,\dots,\alpha\inv)\in D.
  \] 
  Note that $W^-:=\{\trn{w(vy)}:\vy\in\R^{n-2}\}$ is the contracting
  horospherical subgroup of $\cen(A)$ associated to $g_0$. Therefore
  $\pi_0(v)$ is stabilized by $W^-$. Since $\cen(A)\cong
  A\cdot\SL(n-1,\R)$, one verifies that $\cen(A)$ is generated by $W$,
  $W^-$ and $D$. Therefore $\cen(A)$ stabilizes $\pi_0(v)$; that is
  \eqref{eq:89} holds.
\end{proof}

\begin{lemma}
  \label{lema:accu}
  Let $x_i\to x$ be a convergent sequence in $\R^{n-1}$. Suppose there
  exists $v\in V$ such that
\begin{equation}
    \label{eq:88}
    u(x_i)v\in V^0+V^-,\quad \forall i\in \N,
  \end{equation}
  and a sequence $\delta_i\to 0$ of nonzero reals such that
  $f=\lim_{i\to\infty}(x_i-x)/\delta_i$ exists. Then
  \begin{equation}
    \label{eq:78}
    u(f)\in\Stab(\pi_0(u(x)v)).
  \end{equation}
\end{lemma}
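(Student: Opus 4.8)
The plan is to reduce the statement to the already-established consequences of the Basic Lemma, in particular Corollary~\ref{cor:curve}, by differentiating the hypothesis \eqref{eq:88} along the direction $f$. First I would normalize: after replacing $v$ by $u(x)v$, the sequence $x_i$ by $x_i-x$, and noting that the projection $\pi_0$ and the conditions $V^0+V^-$ are unchanged under this substitution (since $u(x)$ is a fixed element of $G$ and conjugation by it merely reindexes the $\cA$-eigenspaces up to a bounded distortion — actually one must be slightly careful here, since $u(x)$ does \emph{not} commute with $\cA$; instead I would keep track of $v_0 := u(x)v$ directly and work with $u(x_i - x)v_0 = u(x_i)v$). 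So without genuine loss of generality assume $x=0$, write $v$ for $v_0$, and we must show $u(f)\in\Stab(\pi_0(v))$.

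The key step is to expand $u(x_i)v$ to first order in $\delta_i$. Write $x_i = \delta_i f_i$ where $f_i \to f$. Since $u$ is polynomial (in fact affine-linear in the matrix entries, as $u(\vxi)$ depends linearly on $\vxi$) and the $G$-action on $V$ is via a representation $\rho_V$, we have
\begin{equation}
  \label{eq:accu-exp}
  u(x_i)v = v + \delta_i\, (d\rho_V(X(f_i)))v + O(\delta_i^2),
\end{equation}
where $X(\cdot)$ is the Lie-algebra element from \eqref{eq:97}, uniformly as $i\to\infty$ since $f_i\to f$. Applying $\pi_-$ (the projection onto $V^-$, which commutes with nothing useful directly, but is a fixed linear map) to \eqref{eq:accu-exp} and using the hypothesis that $\pi_+(u(x_i)v)=0$ for all $i$: from \eqref{eq:88}, $\pi_+(u(x_i)v)=0$, i.e. $\pi_+\big(v + \delta_i (d\rho_V(X(f_i)))v + O(\delta_i^2)\big)=0$. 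Taking $i\to\infty$ gives $\pi_+(v)=0$ (so $v\in V^0+V^-$ already, consistent with $x_i\to 0$ and continuity), and then dividing the remaining terms by $\delta_i$ and letting $i\to\infty$ yields $\pi_+\big((d\rho_V(X(f)))v\big)=0$. Hmm — but this only controls the $\pi_+$-component of $X(f)v$, not of $u(f)v = \exp(d\rho_V(X(f)))v$; higher-order terms in the exponential could re-enter $V^+$. Let me instead argue via Corollary~\ref{cor:curve} applied to a suitable set $\cE$: since $u(t f_i)v \in V^0+V^-$ holds not for a single $t$ but — wait, the hypothesis only gives us the points $x_i$, not a whole segment.

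The cleaner route, and the one I would actually pursue: observe that for each fixed $i$, the hypothesis gives us only one point $x_i$, but we have \emph{infinitely many} of them accumulating at $0$ along direction $f$. The right tool is therefore to pass to the limit of the \emph{rescaled} curves. Consider, for a small parameter $\tau$, the point $u(\tau f)v$. We want to show $\pi_+(\exp(\tau\,d\rho_V(X(f)))v)=0$ for all $\tau$, because then (by the standard $\la{SL}_2$-representation theory as in Lemma~\ref{lema:sl2}, applied to the $\la{SL}_2$-triple generated by $X(f)$) together with $v\in V^0+V^-$ one forces $X(f)v\in V^0+V^-$ and moreover $X(f)$ maps the $V^0$-component appropriately; ultimately $v$ and $u(\tau f)v$ lie in the same sum-of-nonpositive-eigenspaces and $\pi_0$ is preserved. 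To get $\pi_+(u(\tau f)v)=0$ for all $\tau\geq 0$: this is where I expect the main obstacle. The trick is that $u(x_i)v \in V^0 + V^-$ for all $i$ means $\sup_{t>0}\norm{a_t u(x_i)v} < \infty$ for each $i$ — in fact $\norm{a_t u(x_i) v}$ stays bounded as $t\to+\infty$. Now $a_t u(x_i) v = a_t u(\delta_i f_i) a_{-t}\cdot a_t v = u(e^{nt}\delta_i f_i)\cdot a_t v$. Choosing $t = t_i$ with $e^{n t_i}\delta_i = \tau$ (possible for $i$ large since $\delta_i\to 0$), and noting $a_{t_i}v \to \pi_0(v)$ along the appropriate subsequence if $v\in V^0+V^-$ — more precisely $\pi_+(a_{t_i}v) = 0$ and $a_{t_i}(\pi_-v)\to 0$, so $a_{t_i}v \to \pi_0(v)$ — we get
\begin{equation}
  \label{eq:accu-limit}
  a_{t_i} u(x_i) v = u(\tau f_i)\, a_{t_i} v \;\longrightarrow\; u(\tau f)\,\pi_0(v)
  \quad\text{as } i\to\infty.
\end{equation}
But $\pi_+(a_{t_i}u(x_i)v) = a_{t_i}\pi_+(u(x_i)v) = 0$ for all $i$ by hypothesis, and $\pi_+$ is continuous, so $\pi_+(u(\tau f)\pi_0(v))=0$ for every $\tau\geq 0$. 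Since this holds for all $\tau\geq 0$ and $u(\tau f)\pi_0(v) = \exp(\tau\,d\rho_V(X(f)))\pi_0(v)$ with $\pi_0(v)\in V^0$, the $\la{SL}_2$-representation theory (Lemma~\ref{lema:sl2} applied to the triple $(X(f), H(f), Y(f))$, decomposing $V$ into irreducibles) forces $X(f)\pi_0(v) \in V^0 \oplus V^-$; iterating, $\exp(\tau X(f))\pi_0(v)$ has no $V^+$-component and its $V^0$-component is $\pi_0(v)$ itself provided $X(f)\pi_0(v)$ has no $V^0$-component either, which again follows from the $\la{SL}_2$-structure (a highest-weight-zero vector killed by $Y$ stays put under $X$ up to lower weights). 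Hence $\pi_0(u(\tau f)\pi_0(v)) = \pi_0(v)$, and in the limit $\tau$ arbitrary this gives $u(f)\in\Stab(\pi_0(v)) = \Stab(\pi_0(u(x)v))$ after undoing the normalization, which is \eqref{eq:78}. The main obstacle, as indicated, is the bookkeeping in \eqref{eq:accu-limit} — controlling $a_{t_i}v$ and the rescaling $e^{nt_i}\delta_i = \tau$ simultaneously with $f_i\to f$ — and then extracting the stabilizer statement cleanly from $\pi_+(u(\tau f)\pi_0(v))=0$ via the representation theory of the single $\la{SL}_2$ spanned by $X(f)$.
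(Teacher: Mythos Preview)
Your rescaling idea --- choosing $t_i$ so that $e^{nt_i}\delta_i$ is a fixed constant and computing
\[
a_{t_i}u(x_i)v \;=\; u\bigl(e^{nt_i}(x_i-x)\bigr)\cdot a_{t_i}u(x)v \;\longrightarrow\; u(f)\,\pi_0(u(x)v)
\]
--- is exactly the paper's argument. But the paper finishes in one line where you take a detour: since $u(x_i)v\to u(x)v$ in $V^0+V^-$, the \emph{same} sequence $a_{t_i}u(x_i)v$ also converges to $\pi_0(u(x)v)$ directly (this is the general fact you already invoked, that $w_i\to w$ in $V^0+V^-$ forces $a_{t_i}w_i\to\pi_0(w)$). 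Equating the two limits gives $u(f)\pi_0(u(x)v)=\pi_0(u(x)v)$ immediately --- no $\pi_+$ analysis, no $\la{sl}_2$ theory, and no need to vary $\tau$.

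Your alternative ending via $\pi_+(u(\tau f)\pi_0(v))=0$ for all $\tau$ does reach the conclusion, but the justification should be the elementary observation that $[\cA,X(f)]=nX(f)$, so $X(f)^k\pi_0(v)\in V^{nk}\subset V^+$ for $k\geq 1$; hence $u(\tau f)\pi_0(v)-\pi_0(v)$ lies entirely in $V^+$, and its vanishing for all $\tau$ forces $X(f)\pi_0(v)=0$. Lemma~\ref{lema:sl2} is not the right tool here: its hypothesis concerns vectors in the \emph{negative} part for the $\la{sl}_2$-grading by $H(f)$, which need not coincide with the $\cA$-grading you are using, and your vector $\pi_0(v)$ sits in weight zero, not in $W^-$.
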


\begin{proof}
For any sequence $t_i\to\infty$ and $w_i\to w$ in $V^-+V^0$, we have
$a_{t_i}w_i\to \pi_0(w)$ as $i\to\infty$. In particular,
\begin{equation}
  \label{eq:54}
 a_{t_i}u(x_i)v\toinfty\pi_0(u(x)v). 
\end{equation}

Put $t_i=(1/n)\log(\delta_i\inv)$ for all $i$. Then 
\begin{align*}
  a_{t_i}u(x_i)&=a_{t_i}u(x_i-x)u(x)v \\ \
              &=u(e^{nt_i}(x_i-x))(a_{t_i}u(x)v) \toinfty u(f)\pi_0(u(x)v).
\end{align*}
Therefore $u(f)$ stabilizes $\pi_0(u(x)v)$.
\end{proof}

\begin{corollary}
  \label{cor:curve-main}
  Let $\phi:I=[a,b]\to \R^{n-1}$ be a differentiable curve which is
  not contained in a proper affine subspace of $\R^{n-1}$. Let $0\neq
  v\in V$ be such that
  \begin{equation}
    \label{eq:93}
    u(\phi(s))v\in V^0+V^-, \quad \forall s\in I.
  \end{equation}
  Then $v$ is stabilized by $G$.
\end{corollary}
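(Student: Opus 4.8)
The plan is to show that $\Stab_G(v)=G$ by proving that a Borel subgroup of $G$ fixes a suitable nonzero vector attached to $v$, and then finishing with a highest-weight argument. First I would reduce to the case that $V$ has no nonzero $G$-fixed vectors. Since $\la{sl}(n,\R)$ is semisimple, $V=V^G\oplus V'$ as a $G$-module, where $V^G$ is the space of $G$-fixed vectors and $V'$ is a complement with no trivial constituent. Because $\cA\in\la{sl}(n,\R)$ we have $V^G\subseteq V^0$; writing $v=v^G+v'$ and using $u(\phi(s))v^G=v^G\in V^0$, the hypothesis gives $u(\phi(s))v'=u(\phi(s))v-v^G\in V^0+V^-$ for all $s$, while $u(\phi(s))$ fixes $v^G$. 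Hence it suffices to show $v'=0$, so from now on assume $V^G=\{0\}$ and, for contradiction, $v\neq 0$.

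Since the image of $\phi$ affinely spans $\R^{n-1}$, it contains an affine basis; enlarge a well-chosen base point $\phi(s_0)$ to such a basis $\cB\subseteq\phi(I)$. By Proposition~\ref{prop:main} and Corollary~\ref{cor:main2}, $w:=\pi_0(u(\phi(s_0))v)\neq 0$ and the maximal torus $D_\cC$ attached to $\cC=\{e-\phi(s_0):e\in\cB\setminus\{\phi(s_0)\}\}$ lies in $\Stab_G(w)$. Choosing the base point $s_0$ beforehand so that $\cE:=\phi(I)-\phi(s_0)$ is not contained in a union of $n-1$ proper subspaces of $\R^{n-1}$, Corollary~\ref{cor:curve} improves this to $\cen(A)\subseteq\Stab_G(w)$. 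If moreover $\phi$ is not locally constant at $s_0$, pick $h_i\to 0$ with $\phi(s_0+h_i)\neq\phi(s_0)$ and $\delta_i=\norm{\phi(s_0+h_i)-\phi(s_0)}$; after passing to a subsequence $f:=\lim(\phi(s_0+h_i)-\phi(s_0))/\delta_i$ is nonzero and, by Lemma~\ref{lema:accu}, $u(f)\in\Stab_G(w)$. Conjugating $u(f)$ by $\cen(A)\subseteq\Stab_G(w)$ — which acts on $\R^{n-1}$ as $\GL(n-1,\R)$, hence transitively on $\R^{n-1}\setminus\{0\}$ — yields $u(\eta)\in\Stab_G(w)$ for every $\eta$, so the unipotent radical $\{u(\eta):\eta\in\R^{n-1}\}$ of the standard maximal parabolic is contained in $\Stab_G(w)$. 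Thus $P^+=\cen(A)\ltimes\{u(\eta):\eta\in\R^{n-1}\}$, and in particular a Borel subgroup $B^+$ of $G$, fixes $w$. Decomposing $V$ into irreducible $G$-modules, a $B^+$-fixed vector is fixed by the maximal unipotent subgroup and by the diagonal torus, hence is a highest weight vector of weight $0$, and therefore lies in the sum of the trivial constituents, which is $V^G=\{0\}$. So $w=0$, contradicting $w\neq 0$; this forces $v=0$ in the reduced situation, i.e.\ $v\in V^G$ in general.

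The step I expect to be the real obstacle is the choice of base point: one needs an $s_0\in I$ for which $\phi(I)-\phi(s_0)$ avoids every union of $n-1$ proper linear subspaces (so that Corollary~\ref{cor:curve} applies) and at which $\phi$ is simultaneously not locally constant. For analytic $\phi$ both conditions are automatic, since then the preimage of any proper affine subspace of $\R^{n-1}$ is a proper analytic subset of $I$. For a merely differentiable $\phi$ whose image is a geometrically degenerate connected set — say a finite union of lines through a common point — no base point need make $\phi(I)-\phi(s_0)$ avoid all such unions, and one must argue more carefully: first discard the open subset of $I$ on which $\phi$ is locally constant (its complement is nonempty, closed, and carries the full image), and then, using the connectedness of $\phi(I)$ and its affine non-degeneracy, either locate a workable base point by a Baire–linear-algebra argument (or an induction on $n$), or else harvest directly, via Lemma~\ref{lema:accu} applied at a single point with several secant directions, enough unipotents to generate $\{u(\eta):\eta\in\R^{n-1}\}$ and close the argument without Corollary~\ref{cor:curve}. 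This point-set reduction, rather than the group-theoretic bookkeeping, is the heart of the matter.
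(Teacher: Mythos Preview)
Your argument follows the paper's route: apply Corollary~\ref{cor:curve} at a base point $s_0$ to get $\cen(A)\subset\Stab_G(w)$ for $w=\pi_0(u(\phi(s_0))v)\neq 0$, apply Lemma~\ref{lema:accu} to adjoin a nontrivial $u(f)$, observe that these together generate the maximal parabolic $P^+$, and finish with a highest-weight argument. The one substantive difference is the endgame: you reduce at the outset to $V^G=\{0\}$ and reach a contradiction directly ($w$ is $P^+$-fixed, hence lies in $V^G$), whereas the paper does not reduce, concludes instead that $G$ fixes $w$, and then invokes Proposition~\ref{prop:main} a second time---applied to $v_0:=u(\phi(s_0))v-w$, which still satisfies the hypothesis and has $\pi_0(v_0)=0$---to force $v_0=0$. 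Your initial splitting $V=V^G\oplus V'$ buys you exactly that second application.

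Your concern about the base point is legitimate, and in fact the paper is looser here than you are: it simply asserts that for \emph{every} $s_0$ the set $\phi(I)-\phi(s_0)$ avoids any finite union of proper subspaces, which is false for merely $C^1$ curves (an L-shaped $C^1$ curve in $\R^2$, with the corner chosen as base point, is already a counterexample). For analytic $\phi$---the only case the paper actually uses downstream---the assertion is immediate, since the preimage under $\phi$ of any proper affine subspace is finite. So the point-set difficulty you flag is a genuine lacuna in the stated differentiable generality, shared by the paper's own proof, but it disappears in the analytic setting; it is not, contrary to your closing remark, where the real content lies.
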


\begin{proof}
  Since $\phi$ is differentiable and $\phi(I)$ not contained in a
  proper affine subspace of $\R^{n-1}$, we conclude that for any
  $s_0\in I$, the set $\cE_0:=\{\phi(s)-\phi(s_0):s\in I\}$ is not
  contained in a finite union of proper subspaces of $\R^{n-1}$. Now
  we can apply Corollary~\ref{cor:curve} to the set
  $\cE=\{\phi(s):s\in I\}$ and conclude that $\pi_0(u(\phi(s))v)\neq
  0$ is stabilized by $\cen(A)$ for all $s\in I$. Now let $s_0\in I$
  such that $\dot\phi(s_0)\neq 0$. Let $\delta_i\to 0$ be a sequence
  of of nonzero reals. Then
  $\dot\phi(s_0)=\lim_{i\to\infty}(\phi(s_i)-\phi(s_0))/\delta_i$. Therefore
  by Lemma~\ref{lema:accu} $\pi_0(u(\phi(s_0))v)$ is stabilized by
  $u(\dot\phi(s_0))$. Now the subgroup, say $Q$, generated by
  $u(\dot\phi(s_0))$ and $\cen(A)$ contains
  $\{u(\vx):\vx\in\R^{n-1}\}$. Therefore $Q$ is a parabolic subgroup
  of $G$. Since $Q$ stabilizes $\pi_0(u(\phi(s_0))v)$, we conclude
  that $G$ stabilizes $\pi_0(u(\phi(s_0))v)$.

  We put $v_0=u(\phi(s_0)v)-\pi_0(u(\phi(s_0)v))$. Then 
  \begin{equation}
    \label{eq:128}
    u(\phi(s)-\phi(s_0))v_0=u(\phi(s))v+\phi_0(u(\phi(s_0))v)\in
    V^-0+V^-,\quad\forall s\in I.
  \end{equation}
  We choose a finite subset $I_1\subset I$ containing $s_0$ such that
  $\{\phi(s)-\phi(s_0):s\in I_1\}$ is an affine basis of $\R^{n-1}$,
  and apply Proposition~\ref{prop:main}. Therefore if $v_0\neq 0$ then
  $\pi_0(v_0)\neq 0$. Since by our choice $\pi_0(v_0)=0$, we conclude
  that $v_0=0$. Therefore $u(\phi(s_0))v$ is stabilized by $G$. Hence
  $v$ is stabilized by $G$, because $u(\phi(s_0))\in G$.
\end{proof}

\begin{corollary}
  \label{cor:expand}
  Let $\phi:[a,b]\to \R^{n-1}$ be a differentiable curve, whose image
  is not contained in a proper affine subspace of $\R^{n-1}$. Let $V$
  be a finite dimensional normed linear space on which $G$ acts
  linearly. Further suppose that there is no nonzero $G$-fixed vector
  in $V$.  Then given $C>0$ there exists $t_0>0$ such that
    \begin{equation}
      \label{eq:31}
      \sup_{s\in I} \norm{{a_t}u(\phi(s))v}\geq C\norm{v},\quad \text{$\forall
      v\in V$ and $t>t_0$}.
    \end{equation}
  \end{corollary}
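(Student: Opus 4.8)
The plan is to argue by contradiction and reduce the statement to Corollary~\ref{cor:curve-main}, which already does the heavy lifting. Suppose the conclusion fails for some $C>0$. Since the asserted inequality is homogeneous in $v$, for each $i\in\N$ we may then choose $t_i>i$ and a \emph{unit} vector $v_i\in V$ such that $\sup_{s\in I}\norm{a_{t_i}u(\phi(s))v_i}<C$. After passing to a subsequence, $v_i\to v$ for some $v\in V$ with $\norm{v}=1$.

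Next I would show that $u(\phi(s))v\in V^0+V^-$ for every $s\in I$. Fix $s$ and set $w_i=u(\phi(s))v_i$, so that $w_i\to w:=u(\phi(s))v$. Because the eigenspace decomposition $V=V^+\oplus V^0\oplus V^-$ of $\cA$ is fixed, all the projections $\pi_+,\pi_0,\pi_-$ and each $\pi_\mu$ are continuous, hence $\pi_\mu(w_i)\to\pi_\mu(w)$. Suppose $\pi_+(w)\neq 0$ and let $\mu^\ast>0$ be the smallest $\mu$ with $\pi_\mu(w)\neq 0$. Then $\pi_{\mu^\ast}(w_i)\to\pi_{\mu^\ast}(w)\neq 0$, and since $a_{t_i}$ commutes with the eigenprojections we get $\norm{a_{t_i}w_i}\geq c\,\norm{a_{t_i}\pi_{\mu^\ast}(w_i)}=c\,e^{\mu^\ast t_i}\norm{\pi_{\mu^\ast}(w_i)}$ for a fixed constant $c>0$ depending only on the norm; the right-hand side tends to $\infty$, contradicting $\norm{a_{t_i}w_i}\leq\sup_{s'\in I}\norm{a_{t_i}u(\phi(s'))v_i}<C$. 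Therefore $\pi_+(u(\phi(s))v)=0$, i.e.\ $u(\phi(s))v\in V^0+V^-$, for all $s\in I$.

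Finally, $\phi$ is differentiable and its image is not contained in a proper affine subspace of $\R^{n-1}$, so Corollary~\ref{cor:curve-main} applies to the nonzero vector $v$ and gives that $v$ is fixed by $G$. Since $\norm{v}=1$, this contradicts the hypothesis that $V$ has no nonzero $G$-fixed vector, and the proof is complete.

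I do not expect a genuine obstacle, since all the substantive input is already packaged in the Basic Lemma (Proposition~\ref{prop:main}) and Corollary~\ref{cor:curve-main}. The one step that needs a little care is the limiting argument of the middle paragraph: one must use that $a_{t_i}$ acts as the identity on $V^0$ and contracts $V^-$ (so the $V^0$- and $V^-$-components of $a_{t_i}w_i$ stay bounded along $t_i\to\infty$), while a nonvanishing $V^+$-component of the limit $w$ forces exponential blow-up of $\norm{a_{t_i}w_i}$ — this is exactly the mechanism producing the contradiction and, once isolated, the verification is routine.
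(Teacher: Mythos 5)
Your proof is correct and follows essentially the same route as the paper: argue by contradiction, normalize to a sequence of unit vectors $v_i\to v$ with bounded $\sup_{s}\norm{a_{t_i}u(\phi(s))v_i}$, deduce that $\pi_+(u(\phi(s))v)=0$ for all $s$, and then invoke Corollary~\ref{cor:curve-main} to get a nonzero $G$-fixed vector, contradicting the hypothesis. Your middle paragraph simply unpacks the step the paper states in one line, and the unpacking is correct.
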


\begin{proof}
If the conclusion of the proposition fails to hold then there exists
$C>0$ and a sequence $t_i\to\infty$ and convergent sequence $v_i\to v$
in $V$ such that $\norm{v}=1$, and
\begin{equation}
      \label{eq:31c}
      \sup_{s\in I} \norm{a_{t_i}u(\phi(s))v_i}\leq C\norm{v_i},\quad
        \forall i\in\N.
 \end{equation}
 Therefore we conclude that for any $s\in I$,
 \begin{equation}
   \label{eq:100}
   \pi_+(u(\phi(s)v))=\lim_{i\to\infty} \pi_+(u(\phi(s))v_i)=0.
 \end{equation}

 In other words,
\begin{equation}
  \label{eq:101}
  u(\phi(s))v\subset V^-+V^0,\quad \forall s\in I.
\end{equation}
Then by Corollary~\ref{cor:curve-main}, $v$ is fixed by $G$. But this
contradicts our hypothesis and the proof is complete.
\end{proof}

\section{Ratner's theorem and dynamical behaviour of translated
  trajectories near singular sets} \label{sec:Ratner}

Our aim is to prove that $\mu$, as obtained in
Corollary~\ref{cor:mu-return}, is $L$-invariant. As explained in
\S\ref{sec:sketch}, we will use a technique from \cite{Shah:son1}.

\subsection{Twisted curves and limit measure}
Let $\dot\phi(s)$ denotes the tangent to the curve $\phi$ at $s$. Now
onwards we shall assume that $\dot\phi(s)\neq 0$ for all $s\in I$.
  
Fix $w_0\in\R^{n-1}\setminus \{0\}$, and define
\begin{equation}
  \label{eq:23}
  W=\{u(sw_0):s\in\R\}.
\end{equation}

Recall that $\cen(A)$ acts on $\R^{n-1}$ via the correspondence
$u(z\cdot v)=zu(v)z\inv$ for all $z\in\cen(A)$ and $v\in
\R^{n-1}$. This action is transitive on
$\R^{n-1}\setminus\{0\}$. Therefore there exists an analytic function
$z:I\to \cen(A)$ such that
\begin{equation}
  \label{eq:17}
  z(s)\cdot \dot\phi(s)=w_0.
\end{equation}

For any $i\in\N$, let $\lambda_i$ be the probability measure on
$\lml$ defined by
\begin{equation}
  \label{eq:26}
  \int_{\lml} f\,d\lambda_i:=\frac{1}{\abs{I}}\int_{s\in I}
  f(z(s)a_iu(\phi(s))x_i)\,ds,\quad\forall f\in\Cc(\lml).
\end{equation}

\begin{corollary}
  \label{cor:limit}
  After passing to a subsequence, $\lambda_i\to \lambda$ with respect
  to the weak-$\ast$ topology on the space of probability measures on
  $\lml$.
\end{corollary}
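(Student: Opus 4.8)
The plan is to reduce this to the non-divergence result already established in Theorem~\ref{thm:return} (equivalently Corollary~\ref{cor:mu-return}), using that the twisting functions $z(s)$ range over a compact set. First I would observe that since $z:I\to\cen(A)$ is analytic and $I=[a,b]$ is compact, the image $z(I)$ is a compact subset of $\cen(A)\subset L$. Write $K_0=z(I)$. Then for each $i$, the measure $\lambda_i$ defined by \eqref{eq:26} is the pushforward of $\mu_i$ (from \eqref{eq:13}) under the map which sends $a_iu(\phi(s))x_i$ to $z(s)a_iu(\phi(s))x_i$; more precisely, $\lambda_i$ is supported on the set $\{z(s)a_iu(\phi(s))x_i:s\in I\}\subset K_0\cdot\{a_iu(\phi(s))x_i:s\in I\}$.

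Next I would transfer the compactness of return. Fix $\epsilon>0$. By Theorem~\ref{thm:return} there is a compact set $\cF\subset\lml$ with $\mu_i(\cF)\geq 1-\epsilon$ for all large $i$. Since the action map $L\times\lml\to\lml$ is continuous and $K_0$ is compact, the set $\cF':=K_0\cdot\cF=\{z x:z\in K_0,\ x\in\cF\}$ is a compact subset of $\lml$. Now for all large $i$,
\begin{equation*}
  \lambda_i(\cF')=\frac{1}{\abs{I}}\abs{\{s\in I:z(s)a_iu(\phi(s))x_i\in\cF'\}}
  \geq \frac{1}{\abs{I}}\abs{\{s\in I:a_iu(\phi(s))x_i\in\cF\}}=\mu_i(\cF)\geq 1-\epsilon,
\end{equation*}
because $a_iu(\phi(s))x_i\in\cF$ forces $z(s)a_iu(\phi(s))x_i\in z(s)\cF\subset\cF'$. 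Thus the family $\{\lambda_i\}$ is tight (uniformly over large $i$): for every $\epsilon>0$ there is a compact $\cF'$ with $\lambda_i(\cF')\geq 1-\epsilon$ for all large $i$.

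Finally I would invoke weak-$\ast$ compactness: the space of probability measures on the locally compact second-countable space $\lml$ is sequentially compact in the weak-$\ast$ topology once one has tightness, so by a diagonal argument over a sequence $\epsilon_k\to 0$ we may pass to a subsequence of $\{\lambda_i\}$ converging in the weak-$\ast$ topology to some measure $\lambda$, and tightness guarantees that $\lambda$ is again a probability measure (no mass escapes to infinity). This is exactly the assertion of the corollary. The only point requiring any care — and it is genuinely routine here — is the passage from tightness to the existence of a weak-$\ast$ convergent subsequence with probability limit; this is the standard Prokhorov/Helly argument, identical to the one already used to deduce Corollary~\ref{cor:mu-return} from Theorem~\ref{thm:return}, so I would simply cite that parallel. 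No new ideas beyond compactness of $z(I)$ are needed.
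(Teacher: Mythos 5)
Your proof is correct and takes essentially the same route as the paper's: use compactness of $z(I)$ to push the non-divergence estimate for $\mu_i$ from Theorem~\ref{thm:return} onto the compact set $z(I)\cdot\cF$, then conclude by the standard tightness/one-point-compactification argument.
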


 \begin{proof}
   Given $\epsilon>0$, by Theorem~\ref{thm:return} there exists a
   compact set $\cF\subset \lml$ be such that $\mu_i(\cF)\geq
   1-\epsilon$ for all $i>0$. Since $z(I)$ is compact, there exists a
   compact set $\cF_1\supset z(I)\cF$. Then $\lambda_i(\cF_1)\geq
   1-\epsilon$ for all $i$. Now the corollary is deduced by standard
   arguments using the one-point compactification of $\lml$.
 \end{proof}

 \subsection{Invariance under unipotent flow}

 \begin{proposition}
   \label{prop:invariant}
   Suppose that $\lambda_i{\toinfty}\lambda$ in the space of
   probability measures on $\lml$ with respect to the weak-$\ast$
   topology. Then $\lambda$ is $W$-invariant.
 \end{proposition}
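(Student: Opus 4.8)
The plan is to show that $\lambda$ is invariant under the one-parameter group $W = \{u(sw_0):s\in\R\}$ by exploiting the fact, recorded in \S\ref{sec:sketch}, that after the twist by $z(s)$ the translated curve stays close to an honest unipotent trajectory in the $w_0$-direction. Concretely, fix $\tau\in\R$; we want $\int f\,d(u(\tau w_0)_*\lambda) = \int f\,d\lambda$ for all $f\in\Cc(\lml)$. Writing $a_i = a_{t_i}$, the starting point is the commutation relation $a_{t_i}u(\phi(s)) = u(e^{nt_i}(\phi(s)-\phi(s_0)))\,a_{t_i}u(\phi(s_0))$ together with the identity $z(s)a_{t_i} = a_{t_i}z(s)$ (since $z(s)\in\cen(A)$) and $z(s)u(\phi(s)-\phi(s_0))z(s)\inv = u(z(s)\cdot(\phi(s)-\phi(s_0)))$. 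Using the first-order expansion $\phi(s)-\phi(s_0) = (s-s_0)\dot\phi(s_0) + O((s-s_0)^2)$ and the normalization $z(s_0)\cdot\dot\phi(s_0) = w_0$, one finds that locally around each $s_0$ the integrand $f(z(s)a_{t_i}u(\phi(s))x_i)$ is, up to a reparametrization of the $s$-variable that is asymptotically linear with nonvanishing speed $e^{nt_i}\norm{\dot\phi(s_0)}\cdot(\text{Jacobian factor})$, the same as $f$ evaluated along the unipotent orbit $\{u(r w_0)\cdot p : r\in\text{interval}\}$ for the appropriate base point $p = z(s_0)a_{t_i}u(\phi(s_0))x_i$.

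First I would make this precise with a partition/change-of-variables argument: cover $I$ by finitely many short subintervals, on each of which the map $s\mapsto$ (the effective $W$-parameter) is a near-linear diffeomorphism onto an interval of length $\to\infty$ as $i\to\infty$; the quadratic error term $e^{nt_i}(s-s_0)^2$ is controlled on a window of size $\delta_{t_i}$ with $e^{nt_i}\delta_{t_i}\to\infty$ but $e^{nt_i}\delta_{t_i}^2\to 0$, and the deviation of $z(s)$ from $z(s_0)$ is also negligible on such a window since $z$ is analytic hence Lipschitz. On each such piece the difference $f(u(\tau w_0)\cdot q) - f(q)$ integrated against the (normalized) arclength parameter tends to $0$ because: translating the parameter interval $[\alpha_i,\beta_i]$ by the bounded amount $\tau$ changes the integral $\int_{\alpha_i}^{\beta_i} f(u(rw_0)q_0)\,dr$ by at most $2\norm{f}_\infty\abs{\tau}$, which is negligible compared with the total mass $(\beta_i-\alpha_i)\to\infty$. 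Summing over the finitely many pieces and passing to the limit $\lambda_i\to\lambda$ gives $\int f\circ u(\tau w_0)\,d\lambda = \int f\,d\lambda$.

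The main obstacle, and where care is needed, is the bookkeeping of the reparametrization: the "effective $W$-parameter" as a function of $s$ depends on $i$, on the base point $s_0$ of the local window, and on the Jacobian of $z(s)\cdot(\cdot)$ at $\dot\phi(s_0)$, and one must check that after normalizing by $\abs{I}$ the pieces fit together into the single measure $\lambda_i$ without overlap or loss of mass, uniformly in $i$. The clean way to organize this is to first prove the statement for $f$ in a dense subalgebra (e.g. uniformly continuous bounded functions, or $\Cc$-functions with a fixed modulus of continuity), so that $f(z(s)a_{t_i}u(\phi(s))x_i)$ and $f(u(r(s)w_0)\,z(s_0)a_{t_i}u(\phi(s_0))x_i)$ differ by less than any prescribed $\eta$ once $i$ is large, and then the purely one-dimensional estimate on translating a long interval closes the argument. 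I expect the rest — measurability, the $O((s-s_0)^2)$ bounds, compactness to extract $\lambda$ (already done in Corollary~\ref{cor:limit}) — to be routine.
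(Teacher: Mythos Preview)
Your proposal is correct and follows essentially the same approach as the paper. The paper itself does not spell out the argument here but refers to \cite[Theorem~3.1]{Shah:son1} and to the heuristic around \eqref{eq:113}--\eqref{eq:unip}, which is precisely the partition/change-of-variables scheme you describe: use $z(s_0)\cdot\dot\phi(s_0)=w_0$ and the commutation $a_tu(\xi)=u(e^{nt}\xi)a_t$ to identify the twisted curve on a window of width $\delta_{t_i}$ with a $W$-orbit segment of length $\asymp e^{nt_i}\delta_{t_i}\to\infty$, control the quadratic remainder by taking $e^{nt_i}\delta_{t_i}^2\to 0$, and then note that a bounded shift $\tau$ along a long orbit segment is negligible after normalization.

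Two small remarks on presentation. First, your ``finitely many short subintervals'' must be allowed to depend on $i$ (their number is $\asymp |I|/\delta_{t_i}\to\infty$); this is implicit in what you wrote but worth stating to avoid confusion. Second, the ``Jacobian factor'' you mention is harmless here: since $z(s_0)$ acts linearly on $\R^{n-1}$ and $z(s_0)\cdot\dot\phi(s_0)=w_0$ exactly, the effective $W$-parameter is simply $r=e^{nt_i}(s-s_0)$ to leading order, with no extra multiplicative constant to track. An equivalent and slightly slicker bookkeeping is to observe directly that
\[
u(\tau w_0)\,z(s)a_{t_i}u(\phi(s))x_i
= z(s)a_{t_i}u\bigl(\phi(s)+e^{-nt_i}\tau\dot\phi(s)\bigr)x_i
\approx z(s)a_{t_i}u\bigl(\phi(s+e^{-nt_i}\tau)\bigr)x_i,
\]
so the $W$-shift becomes an $s$-shift by $e^{-nt_i}\tau\to 0$, and the difference of the two integrals is controlled by the boundary loss $O(e^{-nt_i}|\tau|\,\|f\|_\infty)$ plus the uniform continuity errors you already identified. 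Either packaging yields the same conclusion.
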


\begin{proof}
  This statement can be deduced by an argument identical to that of
  the proof of \cite[Theorem~3.1]{Shah:son1}. An idea of this proof is
  based on the explanation related to
  \eqref{eq:113}--\eqref{eq:unip}.
\end{proof}

Now we shall describe the measure $\lambda$ using
Ratner's~\cite{R:measure} description of ergodic invariant measures
for unipotent flows. Let $\pi:L\to L/\Lambda$ denote the natural
quotient map. Let $W$ be as defined in \eqref{eq:23}.  For $H\in\sH$,
define
\begin{align*}
  N(H,W)&=\{g\in G: g\inv Wg\subset H\} \qquad\textrm{and}\\
  S(H,W)&=\tcup{\substack{F\in\sH\\F\subsetneq H}}N(F,W).
\end{align*}

Then by Ratner's theorem~\cite{R:measure}, as explained in
\cite[Theorem 2.2]{Moz+Shah:limit}:
\begin{theorem}[Ratner]
  \label{thm:Ratner}
  Given the $W$-invariant probability measure $\lambda$ on $\lml$,
  there exists $H\in\sH$ such that
  \begin{equation}
    \label{eq:lambda-H} 
    \lambda(\pi(N(H,W))>0 \quad\textrm{and} \quad \lambda(\pi(S(H,W))=0.
  \end{equation}
  Moreover almost every $W$-ergodic component of $\lambda$ on
  $\pi(N(H,W))$ is a measure of the form $g\mu_H$, where $g\in
  N(H,W)\setminus S(H,W)$ and $\mu_H$ is a finite $H$-invariant
  measure on $\pi(H)\cong H/H\cap\Lambda$. In particular if $H$ is a
  normal subgroup of $L$ then $\lambda$ is $H$-invariant.\qed
\end{theorem}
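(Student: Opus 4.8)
The plan is to obtain this statement as a bookkeeping consequence of M.~Ratner's measure classification theorem for unipotent flows \cite{R:measure}, packaged exactly as in \cite[Theorem~2.2]{Moz+Shah:limit} (cf.\ \cite{Dani+Mar:limit}). The input I would take from Ratner's theorem is: since $W$ is a one-parameter unipotent subgroup, every $W$-invariant, $W$-ergodic probability measure $\eta$ on $\lml$ is algebraic, i.e.\ $\eta=g\mu_M$ for some closed subgroup $M$ with $M\cap\Lambda$ a lattice in $M$, some $g$ with $g\inv Wg\subset M$, where $\mu_M$ is the $M$-invariant probability on $\pi(M)\cong M/(M\cap\Lambda)$ and $\pi(gM)=\supp\eta$; moreover $M$ may be chosen minimal among such subgroups, in which case $M\in\sH$ and minimality forces $g\in N(M,W)\setminus S(M,W)$.

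First I would disintegrate $\lambda=\int_{\lml}\lambda_z\,d\lambda(z)$ into its $W$-ergodic components. By the above, for $\lambda$-almost every $z$ there are $H_z\in\sH$ (minimal) and $g_z\in N(H_z,W)\setminus S(H_z,W)$ with $\lambda_z=g_z\mu_{H_z}$, and since $\supp\lambda_z=\pi(g_zH_z)\subset\pi(N(H_z,W))$ we get $z\in\pi(N(H_z,W))$. As $\sH$ is countable, $\lambda$ gives full measure to $\bigcup_{H\in\sH}\pi(N(H,W))$, so some $H\in\sH$ has $\lambda(\pi(N(H,W)))>0$. Among all such $H$ I would choose one minimal for the order comparing first by $\dim H$ and then by the number of connected components. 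This gives the first assertion in \eqref{eq:lambda-H}. For the second: if $\lambda(\pi(S(H,W)))>0$, then since $S(H,W)$ is, by definition, the union over the countably many $F\in\sH$ with $F\subsetneq H$ of the sets $N(F,W)$, some such $F$ would satisfy $\lambda(\pi(N(F,W)))>0$; but $F\subsetneq H$ forces either $\dim F<\dim H$, or $\dim F=\dim H$ and $F$ has strictly fewer connected components than $H$, contradicting the minimality of $H$. Hence $\lambda(\pi(S(H,W)))=0$.

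Finally I would read off the description of the components. On the full-measure set $\pi(N(H,W))\setminus\pi(S(H,W))$, every $W$-ergodic component whose support meets this set must be of the form $g\mu_H$ with $g\in N(H,W)\setminus S(H,W)$: this is where one uses the uniqueness of the minimal group attached to an algebraic ergodic measure in Ratner's theorem, together with the fact that $\pi(gH)$ is disjoint from $\pi(S(H,W))$ whenever $g\notin S(H,W)$. For the last sentence, if $H$ is normal in $L$ the condition $g\inv Wg\subset H$ is equivalent to $W\subset H$ and so is independent of $g$; since $\lambda(\pi(N(H,W)))>0$ this forces $N(H,W)=L$, each ergodic component $g\mu_H$ is $gHg\inv=H$-invariant, and hence $\lambda=\int g\mu_H\,d\lambda$ is $H$-invariant.

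The combinatorial core above (the dimension-then-components induction together with countability of $\sH$) is routine; the part I expect to be the real obstacle is securing the two structural facts it rests on, which are precisely what \cite{Dani+Mar:limit,Moz+Shah:limit} supply: the Borel measurability of the sets $\pi(N(H,W))$ and $\pi(S(H,W))$ (using that $N(H,W)$ and $S(H,W)$ are locally real-analytic---real-algebraic when $L$ is algebraic---subsets of $L$, together with properness of $\pi$ on suitable pieces, in the spirit of Proposition~\ref{prop:discrete}), and the refined form of Ratner's theorem attaching to each ergodic component a minimal $H\in\sH$ with base point outside $S(H,W)$. Granting these, the bookkeeping is as written.
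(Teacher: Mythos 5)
The paper does not prove this theorem but simply cites it as the content of Ratner's measure classification \cite{R:measure} packaged in \cite[Theorem~2.2]{Moz+Shah:limit}, which is precisely the route you propose. Your unpacking (disintegration into $W$-ergodic components, applying the refined Ratner theorem to attach to each a minimal $M\in\sH$ with base point in $N(M,W)\setminus S(M,W)$, using countability of $\sH$ to extract one $H$ of positive measure, minimizing dimension to force $\lambda(\pi(S(H,W)))=0$, and the normality argument giving $N(H,W)=L$ and $H$-invariance of each $g\mu_H$) is the standard argument underlying that reference and is correct, with the genuinely technical parts (measurability of the singular sets and the localization of ergodic components to $g\mu_H$ with $g\notin S(H,W)$) correctly deferred to \cite{Dani+Mar:limit,Moz+Shah:limit}.
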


\subsection{Algebraic criterion for non-accumulation of measure on
  singular set}

Let $\sA=\{v\in V: v\wedge w_0=0\in \wedge^{d+1}\la{g}\}$. Then $\sA$
is the image of a linear subspace of $V$. We observe that
\begin{equation}
  \label{eq:A}
  N(H,W)=\{g\in L: g\cdot p_H\in \sA\}.
\end{equation}

\begin{proposition}[Cf.~\cite{Dani+Mar:limit}]
  \label{prop:Phi-Psi}
  Given a compact set $\cC\subset \sA$ and $\epsilon>0$, there exists
  a compact set $\cD\subset\sA$ containing $\cC$ such that given any
  neighbourhood $\Phi$ of $\cD$ in $V$, there exists a
  neighbourhood $\Psi$ of $\cC$ in $V$ contained in $\Phi$ such
  that for any $h\in G$, any $v\in V$ and any open interval
  $J\subset I$, one of the following holds:
  \begin{enumerate}
  \item[I)]$hz(t)u(\phi(t))v\in \Phi$ for all $t\in J$.
  \item[II)]
    $\abs{\{t\in J: hz(t)u(\phi(t))v\in\Psi\}} \leq \epsilon 
      \abs{\{t\in J:hz(t)u(\phi(t))v\in\Phi\}}$.
  \end{enumerate}
\end{proposition}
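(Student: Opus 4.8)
The statement is a ``polynomial-like'' (or $(C,\alpha)$-good) behaviour result for the family of $\sA$-valued curves $t\mapsto hz(t)u(\phi(t))v$, and the plan is to reduce it to the growth estimate \eqref{eq:CAlpha2} available for the family $\sF$. First I would reduce to studying a single coordinate of the vector $hz(t)u(\phi(t))v$ with respect to a fixed basis adapted to $\sA$: write $V=\sA\oplus\sA'$ for a $\cA$-invariant (indeed $\cen(A)$-invariant, since $w_0$ is being used to define $\sA$) complement and let $p_{\sA'}:V\to\sA'$ be the projection. The condition $w\in\sA$ is $p_{\sA'}(w)=0$, so membership in a neighbourhood of a compact subset of $\sA$ can be controlled by requiring $\norm{p_{\sA'}(w)}$ small together with $\norm{w}$ bounded; the dichotomy (I)/(II) should be assembled from two dichotomies, one for the ``transverse'' size $\norm{p_{\sA'}(\cdot)}$ and one for the ``total'' size $\norm{\cdot}$. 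The transverse part is where $(C,\alpha)$-goodness enters, and the total part is handled exactly as in Proposition~\ref{prop:relative}.

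The key technical point is that each coordinate of $s\mapsto hz(s)u(\phi(s))v$ lies in a fixed finite-dimensional space of functions that enjoys the growth property \eqref{eq:CAlpha2} \emph{uniformly in $h$ and $v$}. The map $s\mapsto z(s)$ is analytic (chosen in \eqref{eq:17}) and takes values in the compact-up-to-$A$ group $\cen(A)$; more importantly, $s\mapsto(\oplus_d\wedge^d\Ad)(z(s)u(\phi(s)))$ has matrix entries that are $\R$-linear combinations of products of analytic functions of $s$, and by \cite[Proposition~3.4]{Klein+Mar:Annals98} such a finite-dimensional family of analytic (or more generally, suitably non-degenerate) functions is $(C,\alpha)$-good on $I$; this is precisely the extension of $\sF$ indicated in \S\ref{subsec:CAlpha-1}. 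Fixing such $C,\alpha$ once and for all, the entries of $hz(s)u(\phi(s))v$, being fixed linear combinations (with coefficients depending on $h,v$) of these functions, are themselves $(C,\alpha)$-good with the \emph{same} $C,\alpha$. This uniformity is what allows $\cD,\Phi,\Psi$ to be chosen depending only on $\cC,\epsilon$ (and $C,\alpha$), not on $h,v,J$.

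With $(C,\alpha)$-goodness in hand, the construction of $\cD$, $\Phi$, $\Psi$ follows the Dani--Margulis ``tube'' argument. Given $\cC$ compact in $\sA$, enlarge it to $\cD\subset\sA$ with $\cD$ still compact but ``thick enough'' that on the complement of a neighbourhood of $\cD$ the transverse coordinate $\norm{p_{\sA'}}$ is bounded below by a definite amount; shrinking from there produces $\Phi\supset\cD$ and then, using the $(C,\alpha)$-good estimate applied on $J$ to the function $\max$ of the relevant coordinates, one gets $\Psi\subset\Phi$ a neighbourhood of $\cC$ such that alternative~(II) holds whenever the sup over $J$ is not controlled by $\Phi$ (alternative~(I)). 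The standard point is: if $\sup_{t\in J}\norm{hz(t)u(\phi(t))v}$ is large compared with the scale defining $\Phi$, apply \eqref{eq:CAlpha2} to the coordinate functions to see that the $t$ with values in the tiny tube $\Psi$ form a set of relative measure $\leq\epsilon$ inside $\{t:\text{value in }\Phi\}$; otherwise the whole curve stays in $\Phi$ on $J$.

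I expect the main obstacle to be bookkeeping the geometry of the neighbourhoods $\cD\subset\Phi$ and $\cC\subset\Psi$ so that the two dichotomies (transverse smallness and total boundedness) combine cleanly into the single stated dichotomy, rather than any genuinely new estimate: the analytic input (uniform $(C,\alpha)$-goodness of the entry functions, independent of $h$ and $v$) is already supplied by \cite[Proposition~3.4]{Klein+Mar:Annals98} and the discussion around \eqref{eq:CAlpha2}, and the tube argument is the one of \cite{Dani+Mar:limit}. A secondary care point is that $\sA$ is only the image of a linear subspace of $V$ under the direct-sum-of-wedges representation, so one should fix at the outset the linear subspace $\widetilde{\sA}\subset V$ with $\sA=\widetilde{\sA}$ and work with a genuine linear complement; since the whole argument is linear-algebraic this causes no real difficulty.
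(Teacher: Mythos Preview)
Your proposal is correct and follows essentially the same route as the paper: the paper's proof is a one-line reference saying that the argument of \cite[Proposition~4.2]{Dani+Mar:limit} goes through once one replaces the polynomial growth lemma there by the $(C,\alpha)$-good estimate of Proposition~\ref{prop:relative} (via \cite[Proposition~3.4]{Klein+Mar:Annals98}), exactly as in \cite[Proposition~4.4]{Shah:son1}. You have correctly identified the one point that is not literally contained in Proposition~\ref{prop:relative} as stated, namely that the family $\sF$ must be enlarged to accommodate the analytic twist $z(s)$; this is indeed handled by the same Kleinbock--Margulis result and is implicit in the paper's appeal to \cite{Shah:son1}.
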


\begin{proof}
  As noted in \cite[Proposition~4.4]{Shah:son1}, the argument in the
  proof of \cite[Proposition 4.2]{Dani+Mar:limit} goes through with
  straightforward changes using the Proposition~\ref{prop:relative}
  instead of \cite[Lemma~4.1]{Dani+Mar:limit}.
\end{proof}

The next criterion is the main outcome of the linearization technique.

\begin{proposition}[Cf.~\cite{Moz+Shah:limit}]
  \label{prop:main3}
  Let $C$ be any compact subset of $N(H,W)\setminus S(H,W)$. Let
  $\epsilon>0$ be given. Then there exists a compact set
  $\cD\subset\sA$ such that given any neighbourhood $\Phi$ of $\cD$ in
  $V$, there exists a neighbourhood $\cO$ of $\pi(C)$ in $\lml$
  such that for any $h_1,h_2\in L$, and a subinterval $J\subset I$,
  one of the following holds:
  \begin{enumerate}
  \item[a)] There exists $\gamma\in\Lambda$ such that
    $(h_1z(t)u(\phi(t))h_2\gamma)p_H\in\Phi$, $\forall t\in J$.
  \item[b)] $\abs{\{t\in J: h_1z(t)u(\phi(t))\pi(h_2)\in\cO\}}\leq
    \epsilon\abs{J}$.
  \end{enumerate}
\end{proposition}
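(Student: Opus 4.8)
The plan is to derive Proposition~\ref{prop:main3} from the basic covering/linearization result Proposition~\ref{prop:Phi-Psi} by a now-standard compactness argument going back to Dani--Margulis and Mozes--Shah, carried out uniformly in the extra variable $h_1,h_2$ and with the twist $z(t)$ along for the ride. First I would set up the algebraic picture: by \eqref{eq:A} the set $N(H,W)$ is $\{g\in L: g\cdot p_H\in\sA\}$, and $S(H,W)$ is exactly the locus where $g\cdot p_H$ lies in one of the proper ``sub-varieties'' coming from $F\subsetneq H$, $F\in\sH$; so a compact set $C\subset N(H,W)\setminus S(H,W)$ maps to a compact set $\cC:=C\cdot p_H$ inside $\sA$ that stays away from those lower-dimensional pieces. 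Feeding this $\cC$ and the given $\epsilon$ into Proposition~\ref{prop:Phi-Psi} produces the compact set $\cD\subset\sA$ with the self-improving property relating $\Phi$ and $\Psi$; this $\cD$ is the one named in the statement.

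\smallskip

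Next, given a neighbourhood $\Phi$ of $\cD$, let $\Psi\subset\Phi$ be the neighbourhood of $\cC$ furnished by Proposition~\ref{prop:Phi-Psi}. The neighbourhood $\cO$ of $\pi(C)$ in $\lml$ is then built from $\Psi$ by pulling back along the orbit map: informally, $\cO$ consists of points $\pi(g)$ for which every lift $g$ with $g\cdot p_H$ near the $\Lambda$-orbit of $\cC$ actually has $g\cdot p_H\in\Psi$ — here one uses Proposition~\ref{prop:discrete} (discreteness of $\Lambda\cdot p_H$) to make sure only finitely many translates $\gamma p_H$ are relevant near any compact set, so the defining condition is well-posed and $\cO$ is genuinely open and contains $\pi(C)$. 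Now fix $h_1,h_2\in L$ and $J\subset I$. For each $\gamma\in\Lambda$ apply the dichotomy of Proposition~\ref{prop:Phi-Psi} with $h=h_1$ and $v=h_2\gamma p_H$: either $h_1 z(t)u(\phi(t))h_2\gamma p_H\in\Phi$ for all $t\in J$ — which is alternative (a) — or the ``$\Psi$-time'' is at most $\epsilon$ times the ``$\Phi$-time'' along $J$. If alternative (a) fails for every $\gamma$, I want to conclude alternative (b): the set of $t\in J$ with $h_1 z(t)u(\phi(t))\pi(h_2)\in\cO$ is, by construction of $\cO$, contained in the union over the finitely many relevant $\gamma$ of the sets $\{t: h_1 z(t)u(\phi(t))h_2\gamma p_H\in\Psi\}$, and each of those is dominated by $\epsilon$ times the corresponding $\Phi$-time. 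The subtle point is that one must not just sum: a priori the bound $\sum_\gamma$ could lose a factor equal to the number of $\gamma$'s. This is precisely where the argument of \cite[Theorem~2.2]{Moz+Shah:limit} is needed — one argues that the intervals where different $\gamma$'s contribute are essentially disjoint (two distinct $\gamma,\gamma'$ cannot both have $g\gamma p_H$ and $g\gamma' p_H$ in a small neighbourhood of the compact $\cD$, again by discreteness), so that the $\Phi$-times for distinct $\gamma$ over $J$ are disjoint subsets of $J$ and the sum of them is at most $|J|$; hence the total $\Psi$-time is $\le\epsilon|J|$, giving (b).

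\smallskip

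The hard part, and the only step that is not purely bookkeeping, is exactly this disjointness/finiteness control: ensuring that when option (a) fails for all $\gamma$, the sets $\{t\in J: h_1z(t)u(\phi(t))h_2\gamma p_H\in\Phi\}$ for distinct $\gamma$ have pairwise disjoint interiors (or overlap negligibly), so that summing the self-improvement estimate does not destroy the factor $\epsilon$. This is handled by the standard ``linearization'' lemma: if two such intervals overlapped at a point $t$, then $h_1z(t)u(\phi(t))h_2$ would carry two distinct points $\gamma p_H,\gamma'p_H$ of the discrete set $\Lambda\cdot p_H$ into the same small neighbourhood $\Phi$ of $\cD$, and by choosing $\Phi$ small enough (shrinking $\cD$'s neighbourhood, using that $\cD$ is compact and $\Lambda\cdot p_H$ discrete) this forces $\gamma=\gamma'$. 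Everything else — the passage between the orbit in $V$ and the quotient $\lml$, the presence of the analytic twist $z(t)$ which only contributes a compact factor and is absorbed into the ``$h$'' of Proposition~\ref{prop:Phi-Psi}, and the reduction of $S(H,W)$ to a finite union of lower-dimensional algebraic pieces handled by Proposition~\ref{prop:discrete} and countability of $\sH$ — follows the cited arguments of \cite{Dani+Mar:limit,Moz+Shah:limit,Shah:son1} with only notational changes, replacing polynomial growth estimates by the growth properties of the family $\sF$ from Proposition~\ref{prop:relative}.
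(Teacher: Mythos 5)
Your plan---form $\cC:=C\cdot p_H\subset\sA$, apply Proposition~\ref{prop:Phi-Psi} to get $\cD$ and then $\Psi\subset\Phi$, build $\cO$ via the orbit map using Proposition~\ref{prop:discrete}, and then assemble the measure estimate from the $(\Phi,\Psi)$ dichotomy---is exactly the Dani--Margulis linearization strategy that the cited references follow, and it is the right plan. (The paper itself does not reprove the proposition; it simply cites \cite[Proposition~4.5]{Shah:son1}.) You also correctly isolate the single non-trivial step: controlling the multiplicity when alternative~(a) fails for every $\gamma$. However, the argument you give for that step is wrong.

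First, $\Phi$ is part of the \emph{hypothesis} (``given any neighbourhood $\Phi$ of $\cD$''); you are not allowed to shrink it. Only $\Psi$ and $\cO$ are at your disposal after $\Phi$ is handed to you. Second, and more fundamentally, discreteness of $\Lambda\cdot p_H$ does \emph{not} imply that, for a single $g=h_1z(t)u(\phi(t))h_2$, at most one translate $g\gamma p_H$ lies in a fixed bounded neighbourhood of $\cD$. The linear map $g$ can contract severely along certain directions, so two far-apart points $\gamma p_H$ and $\gamma' p_H$ of the discrete orbit can simultaneously land in any prescribed bounded set. Indeed, this simultaneous approach of several $\gamma$-translates toward $\sA$ is precisely what happens near $\pi(S(H,W))$, and it is the phenomenon the whole linearization machinery is designed to control, not something one gets to rule out by ``choosing $\Phi$ small.'' Consequently, the sets $B_\gamma=\{t\in J: h_1z(t)u(\phi(t))h_2\gamma p_H\in\Phi\}$ for distinct $\gamma$ need not be disjoint, and the inequality $\sum_\gamma\abs{B_\gamma}\le\abs{J}$ that your derivation of (b) relies on is unjustified.

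The correct substitute, following \cite[Proposition~4.2]{Dani+Mar:limit} (and its reuse in \cite{Moz+Shah:limit,Shah:son1}), is a maximal-interval covering argument. For each $t$ in the bad set $E$ choose a witness $\gamma(t)$ with $h_1z(t)u(\phi(t))h_2\gamma(t)p_H\in\Psi$; this choice is well-posed for $t\in E$ (after shrinking $\cO$) precisely because $C$ avoids $S(H,W)$ and $\Lambda\cdot p_H$ is discrete. Let $I(t)\subset J$ be the maximal open subinterval containing $t$ on which $h_1z(\cdot)u(\phi(\cdot))h_2\gamma(t)p_H$ stays in $\Phi$. Since (a) fails for $\gamma(t)$ we have $I(t)\subsetneq J$, so $I(t)$ has an endpoint in the interior of $J$ at which the trajectory exits $\Phi$; applying Proposition~\ref{prop:Phi-Psi} on a slight enlargement of $I(t)$ then forces case~(II) and yields that the $\Psi$-time inside $I(t)$ is at most $\epsilon\abs{I(t)}$ (up to an error that vanishes as the enlargement shrinks). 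Finally, $\{I(t)\}_{t\in E}$ is a cover of $E$ by subintervals of $J$, and the elementary one-dimensional covering lemma extracts a countable subfamily of multiplicity at most $2$; summing gives $\abs{E}\le 2\epsilon\abs{J}$, and running the whole argument with $\epsilon/2$ produces~(b). It is this covering estimate, not any disjointness of the $\Phi$-times for distinct $\gamma$, that supplies the required bound.
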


\begin{proof}
  Again this result and its proof are essentially same as those of
  \cite[Proposition 4.5]{Shah:son1}.
\end{proof}

\subsection{Applying the criterion and the basic lemma}

\begin{theorem} 
  \label{thm:equi}
  Suppose that $\lambda_i\toinfty \lambda$ in the space of probability
  measures on $\lml$ with respect to weak-$\ast$ topology. Then
  $\lambda$ is $L$-invariant.
\end{theorem}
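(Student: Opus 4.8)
The plan is to show that the limit measure $\lambda$ from Corollary~\ref{cor:limit}, which by Proposition~\ref{prop:invariant} is already $W$-invariant, is in fact $L$-invariant; combined with the fact that $z(s)$ centralizes $A$, this ultimately gives the $H$-invariance of $\mu$ needed for Theorem~\ref{thm:action}. I would argue by contradiction: suppose $\lambda$ is not $L$-invariant. Then by Theorem~\ref{thm:Ratner} there is some $H\in\sH$ with $H\neq L$ (more precisely, $H$ not containing a normal subgroup of $L$ that contains $G$) such that $\lambda(\pi(N(H,W)))>0$ and $\lambda(\pi(S(H,W)))=0$, and $\lambda$ restricted to $\pi(N(H,W))$ disintegrates into measures of the form $g\mu_H$ with $g\in N(H,W)\setminus S(H,W)$.

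The next step is to feed this into the linearization machinery: pick a compact set $C\subset N(H,W)\setminus S(H,W)$ with $\lambda(\pi(C))>0$, apply Proposition~\ref{prop:main3} to get a compact $\cD\subset\sA$ and, after choosing a neighbourhood $\Phi$ of $\cD$, a neighbourhood $\cO$ of $\pi(C)$ such that for each $i$ either possibility (a) holds — producing $\gamma_i\in\Lambda$ with $z(s)a_{t_i}u(\phi(s))\gamma_i p_H\in\Phi$ for all $s\in I$ — or possibility (b) gives $\lambda_i(\cO)$ small. Since $\lambda_i\to\lambda$ and $\lambda(\cO)\geq\lambda(\pi(C))>0$, possibility (b) must fail for all large $i$, so (a) holds. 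Here I would use Proposition~\ref{prop:discrete} (discreteness of $\Lambda\cdot p_H$) together with Proposition~\ref{prop:Phi-Psi} to upgrade the "stays in $\Phi$" conclusion: by a standard argument the relevant orbit points $\gamma_i p_H$ can be taken with norm bounded below, so after normalizing $v_i=\gamma_i p_H/\norm{\gamma_i p_H}\to v\in\sA$ with $\norm{v}=1$, and one obtains that $z(s)a_{t_i}u(\phi(s))v_i$ stays in a fixed compact subset of $V$ (not merely in a shrinking neighbourhood) — so its $V^+$-component tends to $0$. Passing to the limit, $z(s)u(\phi(s))v\in V^-+V^0$ for all $s\in I$; since $z(s)\in\cen(A)$, conjugating by $z(s)$ and absorbing it, one gets $u(\phi(s))v'\in V^-+V^0$ for all $s$ for a suitable nonzero $v'$ in the $G$-orbit direction.

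Now I would invoke Corollary~\ref{cor:curve-main} (the curve form of the Basic Lemma): since $\phi(I)$ is not contained in a proper affine subspace, the hypothesis $u(\phi(s))v'\in V^0+V^-$ for all $s$ forces $v'$, and hence $v$, to be fixed by $G$. But $v\in\sA$ lies in the $L$-orbit closure of vectors $p_H$, and a $G$-fixed vector $v$ proportional to a limit of $\Lambda\cdot p_H$ would force $\rho(G)\subset N(H,W)$-type containment — concretely, $\rho(G)$ fixing the line through $p_H$ means $\rho(G)\subset\norL(H)$, so $\rho(G)H$ is a group and $H$ (or the group $HN$ for $N$ the normal closure) actually does contain a normal subgroup of $L$ containing $\rho(G)$, contradicting the minimality/choice of $H$ from Ratner's theorem — equivalently, this contradicts that $\lambda$ was assumed not $L$-invariant. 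I expect the technical heart, and main obstacle, to be the passage from possibility (a) of Proposition~\ref{prop:main3} to the clean algebraic statement $u(\phi(s))v\in V^-+V^0$: one must carefully control the interplay of the twist $z(s)$, the expansion $a_{t_i}$, the lattice elements $\gamma_i$, and the neighbourhoods $\Phi,\Psi,\cO$, using Proposition~\ref{prop:Phi-Psi} to trade the "neighbourhood of $\cD$" for genuine relative compactness in $V$, exactly as in \cite[Proof of Theorem~1.2]{Shah:son1}; everything after that is a direct application of the Basic Lemma and its corollaries.
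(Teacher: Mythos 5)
Your overall strategy tracks the paper's closely: invoke Proposition~\ref{prop:invariant} and Theorem~\ref{thm:Ratner} to find $H\in\sH$ with $\lambda(\pi(N(H,W)))>0$, choose compact $C\subset N(H,W)\setminus S(H,W)$ of positive measure, feed it into Proposition~\ref{prop:main3}, rule out alternative~(b) by positivity of $\lambda(\cO)$, and obtain $\gamma_i\in\Lambda$ with $z(s)a_{t_i}u(\phi(s))g_i\gamma_i p_H$ staying in a fixed compact subset of $V$. That much is right and mirrors the paper. However, there are two genuine gaps.

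First, the normalization step is not justified. You write that by a ``standard argument'' the points $\gamma_ip_H$ have norm bounded below — true, by Proposition~\ref{prop:discrete} — and proceed to normalize and pass to a limit as if boundedness \emph{above} were also clear. It is not: alternative~(a) only says the translated vector $a_{t_i}u(\phi(s))(g_i\gamma_i)p_H$ lies in $\Phi_1$ for each fixed $i$, which says nothing about $\norm{g_i\gamma_ip_H}$ as $i\to\infty$. If $\norm{\gamma_ip_H}\to\infty$, your normalized limit $v$ is $0$ on the nose after you divide, and the conclusion that ``$v$ is $G$-fixed'' becomes vacuous; worse, $v$ is no longer proportional to any $\gamma p_H$, so the final algebraic step collapses. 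The paper solves exactly this by decomposing $V=W_0\oplus W_1$ into the $G$-fixed subspace and a $G$-invariant complement, and applying Corollary~\ref{cor:expand} \emph{twice}: once with $C=1$, forcing $\norm{P_1(g_i\gamma_ip_H)}<R$ and hence, together with the trivial bound on the $W_0$-component, $\norm{g_i\gamma_ip_H}<R$; then discreteness and $g_i\to g_0$ give $\gamma_ip_H\equiv\gamma_{i_1}p_H$ eventually, and a second application of Corollary~\ref{cor:expand} with $C=2R/\norm{P_1(g_0\gamma_{i_1}p_H)}$ forces $P_1(g_0\gamma_{i_1}p_H)=0$. This two-step argument is precisely the piece your proposal is missing, and it cannot simply be waved past.

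Second, your final group-theoretic step is much thinner than what the statement requires. From ``$g_0\gamma_{i_1}p_H$ is $G$-fixed'' you jump to ``$\rho(G)\subset\norL(H)$-type containment, contradiction.'' But the statement to be proved is that $\lambda$ is $L$-invariant, not merely that $H$ lies in some normalizer. The paper's route is substantially longer and uses the hypothesis on $x_0$ in an essential way: from $G$-fixedness one gets that $\noroneL(H)\Lambda$ is closed, applies \cite[Thm.~2.3]{Shah:uniform} to extract a subgroup $H_1\subset\noroneL(H)$ with $\pi(H_1)$ closed and of finite volume containing all unipotents of $\noroneL(H)$, then uses the minimality hypothesis on $H$ from the reduction at the start of \S\ref{sec:nondiv} (the orbit $Hx_0=Lx_0$ is assumed to be the minimal closed finite-volume orbit containing $\rho(G)x_0$) to force $F=L$, hence $L\subset\noroneL(H)$, hence $N(H,W)=L$, hence $W\subset H$, hence by simplicity of $\rho(G)$ that $\rho(G)\subset H$, hence $H$ is normal with all orbits closed of finite volume, hence $H=L$, and only then Ratner's theorem gives $L$-invariance of $\lambda$. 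Your sketch neither invokes the minimality hypothesis nor explains how to pass from a normalizer statement to full $L$-invariance, so as written it does not close the argument.
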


\begin{proof}
  By Proposition~\ref{prop:invariant}, $\lambda$ is invariant under
  the action of the nontrivial unipotent subgroup $W$. Therefore by
  Theorem~\ref{thm:Ratner} there exists $H\in\sH$ such that
  \begin{equation}
    \label{eq:28}
    \lambda(\pi(N(H,W))>0 \quad\textrm{and} \quad \lambda(\pi(S(H,W))=0.
  \end{equation}

  Let $C$ be a compact subset of $N(H,W)\setminus S(H,W)$ such that
  $\lambda(C)>\epsilon$ for some $\epsilon>0$. In other words, if we
  write $x_0=\pi(g_0)$ for some $g_0\in G$, then there exists a
  sequence $g_i\to g_0$ such that $x_i=\pi(g_i)$. Given any
  neighbourhood $\cO$ of $\pi(C)$ in $\lml$, there exists $i_0>0$ such
  that for all $i\geq i_0$, we have $\lambda_{i}(\cO)>\epsilon$ for
  all $i>i_0$ and hence
  \begin{equation}
    \label{eq:29}
    \frac{1}{\abs{I}}\abs{\{s\in I:z(s)a_{t_i}u(\phi(s))x_i = 
      \pi(a_{t_i}z(s)u(\phi(s))g_i)\in\cO\}} > \epsilon.
  \end{equation}

  Let $\cD\subset \sA$ be as in the statement of
  Proposition~\ref{prop:main3}. Choose any compact neighbourhood
  $\Phi$ of $\cD$ in $V$. Then there exists a neighbourhood $\cO$ of
  $\pi(C)$ in $\lml$ such that one of the two possibilities of the
  Proposition~\ref{prop:main3} holds. Therefore due to \eqref{eq:29},
  for all $i>i_0$ there exists $\gamma_i\in\Lambda$ such that
  \begin{equation}
    \label{eq:30}
    (z(s)a_{t_i}u(s)g_i\gamma_i)p_H=(a_{t_i}z(s)u(s)g_i\gamma_i)p_H\in
    \Phi, \quad\forall s\in I.
  \end{equation}
  Let $\Phi_1=\{z(s)\inv:s\in I\}\Phi$. Then $\Phi_1$ is contained in
  a compact subset of $V$, and the following holds:
\begin{equation}
  \label{eq:32}
  a_{t_i}u(s)(g_i\gamma_i)p_H\subset \Phi_1, \quad \forall s\in I,\
  \forall i>i_0. 
\end{equation}

Now we express $V=W_0\oplus W_1$, where $W_0$ is the subspace consisting
of all $G$-fixed vectors and $W_1$ is its $G$-invariant
complement. For $i\in\{0,1\}$, let $P_i:V\to W_i$ denote the
associated projection. Consider any norm $\norm{\cdot}$ on $V$ such
that
  \begin{equation}
    \label{eq:33}
    \norm{w}=\max\{\norm{P_0(w)},\norm{P_1(w)}\},\quad\forall w\in V.
  \end{equation}
  Let $R=\sup\{\norm{w}:w\in \Phi_1\}$. By \eqref{eq:32}, for all
  $i\geq i_0$ and $s\in I$ we have
\begin{equation}
  \label{eq:34}
  \norm{{a_t}u(\phi(s))(g_i\gamma_ip_H)} =
  \norm{P_0(g_i\gamma_ip_H)}+\norm{{a_t}u(\phi(s))P_1(g_i\gamma_ip_H)}<R.
\end{equation}
Therefore, by Corollary~\ref{cor:expand} applied to $W_1$ in place of
$V$ and $C=1$, there exists $i_1>i_0$ such that
\begin{equation}
  \label{eq:35}
\norm{P_1(g_i\gamma_ip_H)}<R, \quad\forall i>i_1.
  \end{equation}
  Combining \eqref{eq:33}, \eqref{eq:34} and \eqref{eq:35} we have
\begin{equation}
  \label{eq:37}
  \norm{g_i\gamma_ip_H}<R,\quad\forall i\geq i_1.
\end{equation}
The orbit $\Lambda\cdot p_H$ is discrete due to
Proposition~\ref{prop:discrete}. And $g_i\to g_0$ as
$i\to\infty$. Therefore by passing to a subsequence we may assume that
$\gamma_ip_H=\gamma_{i_1}p_H$ for all $i\geq i_1$.  Put
$\delta_0=\norm{P_1(g_0\gamma_{i_1}p_H)}>0$ and
$C=2R\delta_0\inv$. Then By Corollary~\ref{cor:expand}, there exists
$i_2\geq i_1$ such that for all  $i\geq i_2$ we have
\begin{equation}
  \label{eq:38}
  \sup_{s\in I} \norm{a(t_i)u(\phi(s))P_1(g_i\gamma_{i_1}p_H)}
  \geq C\norm{P_1(g_i\gamma_{i_1}p_H)}>R.
\end{equation}

This contradicts \eqref{eq:34} for all $i\geq i_2$, unless
$P_1(g_0\gamma_{i_1}p_H)=0$. Hence $g_0\gamma_{i_1}p_H$ is
$G$-fixed. Since $\Lambda\cdot p_H$ is closed in $V$, 
$\Lambda\Stab(p_H)=\Lambda\noroneL(H)$ is closed in $L$. Therefore by
taking the inverse $\noroneL(H)\Lambda$ is closed in $L$. Hence the
orbit $\pi(\noroneL(H))$ is closed in $\lml$. By
\cite[Thm.~2.3]{Shah:uniform} there exists a closed subgroup $H_1$ of
$\noroneL(H)$ of $L$ containing all $\Ad$-unipotent one-parameter
subgroups of $L$ contained in $\noroneL(H)$ such that $H_1\cap\Lambda$
is a lattice in $H_1$ and $\pi(H_1)$ is closed. Now $\rho(G)$ is
generated by unipotent one-parameter subgroups. Therefore if we put
$F=g_0\gamma_{i_1}H_1(g_0\gamma_{i_1})\inv$, then $\rho(G)\subset
F$. Also $Fx_0=g_0\gamma_{i_1}\pi(H_1)$ is closed and admits a finite
$F$-invariant measure. Hence by our hypothesis in the beginning of
Section~\ref{sec:nondiv}, $F=L$. Therefore $L=H_1\subset
\noroneL(H)$. Therefore, since $N(H,W)\neq 0$, we have $N(H,W)=L$. In
particular, $W\subset H$. Thus $H\cap\rho(G)$ is a normal subgroup of
$\rho(G)$ containing $W$. Since $\rho(G)$ is a simple Lie group,
$\rho(G)\subset H$. Since $H$ is a normal subgroup of $L$ and $\pi(H)$
is a closed orbit with finite $H$-invariant measure, every orbit of
$H$ on $\lml$ is closed and admits a finite $H$-invariant
measure. Hence by our hypothesis, $H=L$. Now in view of \eqref{eq:28},
by Theorem~\ref{thm:Ratner} $\lambda$ is $H=L$-invariant.
\end{proof}

\begin{corollary}
  \label{cor:mu}
  The measure $\mu$ as in the statement of
  Corollary~\ref{cor:mu-return} is the unique $L$-invariant
  probability measure on $\lml$.
\end{corollary}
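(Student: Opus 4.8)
The plan is to identify $\mu$ (the subsequential weak-$\ast$ limit of the $\mu_i$ furnished by Corollary~\ref{cor:mu-return}) with the $L$-invariant probability measure $\mu_L$ on $\lml$, which exists and is unique precisely because $\Lambda$ is a lattice in $L$; the corollary then follows at once. By Corollary~\ref{cor:limit} we may pass to a further subsequence with $\lambda_i\to\lambda$, and Theorem~\ref{thm:equi} shows $\lambda$ is $L$-invariant, so $\lambda=\mu_L$. The one remaining point --- and the only real content --- is to transfer this from the twisted averages $\lambda_i$ back to the $\mu_i$. One cannot simply drop the twist, since $z(\cdot)$ takes values in the compact but nontrivial set $z(I)\subset\cen(A)$; the twist was introduced precisely because $\{u(\phi(s))x_0\}$ lies in a strongly unstable leaf for the action of $\{a_t\}$, so Theorem~\ref{thm:equi} does not apply to $\{\mu_i\}$ directly --- there is no single unipotent subgroup under which a limit of the $\mu_i$ is invariant, because the tangent direction $\dot\phi(s)$ varies with $s$.

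The mechanism is localization in $s$. For a subinterval $J\subset I$ let $\mu_i^J$ and $\lambda_i^J$ be the probability measures on $\lml$ with $\mu_i^J(g)=\frac1{\abs J}\int_J g(a_{t_i}u(\phi(s))x_i)\,ds$ and $\lambda_i^J(g)=\frac1{\abs J}\int_J g(z(s)a_{t_i}u(\phi(s))x_i)\,ds$; so $\mu_i^I=\mu_i$ and $\lambda_i^I=\lambda_i$. The entire analysis of \S\ref{sec:nondiv}--\S\ref{sec:Ratner} applies with $I$ replaced by $J$: the image of $\phi|_J$ is still not contained in a proper affine subspace of $\R^{n-1}$ by analyticity, $\dot\phi\neq0$ on $J$ by the standing assumption of \S\ref{sec:Ratner}, and Propositions~\ref{prop:relative}, \ref{prop:return}, \ref{prop:Phi-Psi} and \ref{prop:main3}, as well as Corollaries~\ref{cor:curve-main} and \ref{cor:expand}, are already phrased for arbitrary subintervals; hence the proofs of Theorems~\ref{thm:return} and \ref{thm:equi} and of Corollary~\ref{cor:limit} go through over $J$, and with the uniqueness of $\mu_L$ we conclude $\lambda_i^J\to\mu_L$ --- in fact along the full sequence, since every weak-$\ast$ subsequential limit of $\{\lambda_i^J\}$ is a probability measure by the $J$-analogue of Theorem~\ref{thm:return} and is $L$-invariant by the $J$-analogue of Theorem~\ref{thm:equi}. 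Now fix a bounded uniformly continuous $f$ on $\lml$ and $\eta>0$, partition $I$ into subintervals $J_1,\dots,J_m$ of mesh $<\eta$, and choose $s_j\in J_j$. Since $z(s_j)\inv z(s)\to e$ uniformly over $s\in J_j$ and over $j$ as $\eta\to0$, the difference $\mu_i^{J_j}(f)-\lambda_i^{J_j}(f\circ z(s_j)\inv)$, which is the average over $s\in J_j$ of $f(a_{t_i}u(\phi(s))x_i)-f(z(s_j)\inv z(s)a_{t_i}u(\phi(s))x_i)$, is bounded by a quantity $\delta(\eta)\to0$, uniformly in $i$ and $j$. Because $\mu_i=\sum_j(\abs{J_j}/\abs I)\,\mu_i^{J_j}$ and $\lambda_i^{J_j}(f\circ z(s_j)\inv)\to\mu_L(f\circ z(s_j)\inv)=\mu_L(f)$ as $i\to\infty$ (by $L$-invariance of $\mu_L$), it follows that $\limsup_{i\to\infty}\abs{\mu_i(f)-\mu_L(f)}\leq\delta(\eta)$; letting $\eta\to0$ gives $\mu_i(f)\to\mu_L(f)$, and as $f$ is arbitrary, $\mu=\mu_L$.

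The sole genuine difficulty is this localization argument, which is what makes precise the assertion ``$\nu=\nu'$'' of the sketch; the rest is routine. One checks that the non-divergence estimate and the linearization machinery do survive passage to subintervals (they do, being stated for arbitrary $J$); that the weak-$\ast$ convergence $\lambda_i^J\to\mu_L$ upgrades to convergence against the bounded continuous functions $f\circ z(s_j)\inv$ (legitimate because the families are tight and $\mu_L$ is a probability measure); and that assuming $\dot\phi\neq0$ is harmless, this being the standing hypothesis of \S\ref{sec:Ratner} and failing, in any case, only on the finite zero set of the nonconstant analytic map $\dot\phi$. An essentially equivalent but perhaps tidier packaging would form the joining $\tilde\mu_i$ on $I\times\lml$ equal to the law of $(s,a_{t_i}u(\phi(s))x_i)$ with $s$ uniform on $I$, disintegrate a subsequential limit as $\tilde\mu=\int_I\delta_s\otimes\mu_s\,\abs I\inv\,ds$ with $\lml$-marginal $\mu$, push it forward by $(s,y)\mapsto(s,z(s)y)$ to identify the twisted limit over each subinterval with $\mu_L$, and invoke the Lebesgue differentiation theorem to deduce $z(s)_*\mu_s=\mu_L$, hence $\mu_s=\mu_L$, for almost every $s$.
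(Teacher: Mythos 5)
Your proof is correct and follows essentially the same route as the paper's: reduce to the case $\dot\phi\neq 0$ by analyticity, partition $I$ into subintervals $J_j$ on which $z(\cdot)$ is nearly constant, apply Theorem~\ref{thm:equi} (together with Corollary~\ref{cor:limit}) over each $J_j$ to show the twisted averages converge to $\mu_L$, and then remove the twist via uniform continuity of $f$ and the $\cen(A)$-invariance of $\mu_L$. You make one point more explicit than the paper does — that uniqueness of $\mu_L$ upgrades subsequential convergence of $\lambda_i^{J_j}$ to convergence along the full sequence, which is what justifies a quantitative estimate valid for all large $i$ — but this is exactly the implicit logic behind the paper's inequality \eqref{eq:39}, so the two proofs are the same in substance.
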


\begin{proof}
  Since $\phi$ is analytic, the set of points where $\dot\phi(s)=0$ is
  finite. Therefore it is enough to prove the theorem separately for
  each closed subinterval $J$ of $I$, in place of $I$, under the
  additional hypothesis that $\dot\phi(s)\neq 0$ for all $s\in
  J$. Since $\phi$ is analytic, if $J$ is any subinterval of $I$ with
  nonempty interior, then $\phi(J)$ is not contained in a proper
  affine subspace of $\R^{n-1}$. Therefore without loss of generality
  we assume that $\dot\phi(s)\neq 0$ for all $s\in I$. Let $z(s)\in
  \cen(A)$ be as defined in \eqref{eq:17}. Given $\epsilon>0$ there
  exists a neighbourhood ${O}$ of the $e$ in $\cen(A)$ such that
  $\abs{f(zx)-f(x)}<\epsilon$ for all $x\in \lml$ and $z\in{O}$. We
  consider a partition $I=J_1\cup\dots\cup J_k$ such that for any
  $s,s'\in J_j$, we have $z(s)\inv z(s')\in{O}$. For each
  $j\in\{1,\dots,k\}$, choose $s_j\in J_j$, and define the function
  $f_j(x)=f(z(s_j)\inv x)$ for all $x\in \lml$. Then by
  Theorem~\ref{thm:equi}, applied to the interval $J_j$ in the place
  of $I$, there exists $i_j>0$ such that for all $i>i_j$, we have
  \begin{equation}
    \label{eq:39}
    \Abs{\int_{J_j} f_j(z(s)a_{t_i}u(\phi(s))x_i)\,ds - \abs{J_j}\int_{\lml}
      f_j(x)\,d\mu_L(x)}\leq \epsilon\abs{J_j}. 
  \end{equation}
  Since $\mu_L$ is $\cen(A)$-invariant,
\begin{equation}
  \label{eq:431}
\int_{\lml} f_j(x)\,d\mu_L(x)=\int_{\lml} f(x)\,d\mu_L(x)=:S_j.
\end{equation}
Now 
\begin{align}
  \label{eq:42}
  &\Abs{\int_{J_j} f(a_{t_i}u(\phi(s))x_i)\,ds-\abs{J_j}S_j}\\
\label{eq:43}
&=\Abs{\int_{J_j} f((z(s)\inv z(s_j))z(s_j)\inv
  z(s)a_{t_i}u(\phi(s))x_i)\,ds-\abs{J_j}S_j}\\
\label{eq:44}
&\leq \Abs{\int_{J_j} f_j(z(s)a_{t_i}u(\phi(s))x_i)\,ds -
  \abs{J_j}S_j}+\epsilon\abs{J_j}\\
\label{eq:45}
&\leq 2\epsilon \abs{J_j},
\end{align}
where \eqref{eq:44} follows from the choice of ${O}$ and the
partition of $I$ into $J_j$'s, and \eqref{eq:45} follows from
\eqref{eq:39} and \eqref{eq:431}. 

Therefore for any $i\geq \max\{i_1,\dots,i_k\}$, we have
\begin{align}
  \label{eq:46}
  \abs{I}\cdot \Abs{\int f\,d\mu_i - \int f\,d\mu_L}
  &= \Abs{\int_{I}f(a_{t_i}u(\phi(s))x_i)\,ds - \abs{I}\int f(x)\,d\mu_L}\\
  & \leq \sum_{j=1}^k\Abs{\int_{J_j} f(a_{t_i}u(\phi(s))x_i)\,ds -
    \abs{J_j}\int f\,d\mu_L}\\
\label{eq:46c}
  &\leq 2\epsilon \sum_{j=1}^k \abs{J_j}\leq 2\epsilon \abs{I}.
\end{align}
This shows that $\mu$ is $L$-invariant.  
\end{proof}

\begin{proof}[Proof of Theorem~\ref{thm:action}]
  By \cite[Theorem~2.3]{Shah:uniform} there exists a smallest
    subgroup $H$ of $L$ containing $\rho(G)$ such that the orbit
    $Hx_0$ is closed and admits a finite $H$-invariant
    measure. Therefore replacing $L$ by $H$ and $\Lambda$ by the
    stabilizer of $x_0$ in $H$, without loss of generality we may
    assume that $H=L$.

    If \eqref{eq:9L} fails to hold then there exist $\epsilon>0$ and a
    sequence $t_i\to\infty$ such that for each $i$,
  \begin{equation}
    \label{eq:98}
    \Abs{\frac{1}{\abs{b-a}}\int_a^b f(\rho(a_{t_i}u(\phi(s)))x_0)\,ds-\int_{\lml}
      f\,d\mu_L} \geq \epsilon.
  \end{equation}
  If we put $x_i=x_0$ for each $i$, then in view of \eqref{eq:13} and
  Corollary~\ref{cor:mu-return}, this statement contradicts
  Corollary~\ref{cor:mu}. 
\end{proof}

\begin{proof}[Proof of Theorem~\ref{thm:uniform}] Note that if the
  theorem fails to hold then, there exist sequences $x_i\to x_0$ in
  $\cK$ and $t_i\to\infty$ in $\R$ such that
\begin{equation}
  \label{eq:99}
  \Abs{\frac{1}{|b-a|}\int_a^b f(a_{t_i}u(\phi(s))x_i)\,ds-\int
    f\,d\mu_G}>\epsilon,
\end{equation}
for all $i$. This statement contradicts Corollary~\ref{cor:mu}.
\end{proof}

\begin{proof}[Proof of Theorem~\ref{thm:curve}]
  Let $\psi_{i,j}(s)$ denote the $i,j$-th coordinate of $\vpsi(s)$ for
  all $s\in I$. By our hypothesis, the set $\{t:\psi_{1,1}(t)=0\}$ is
  finite. Therefore arguing as in the proof of Corollary~\ref{cor:mu},
  without loss of generality we may assume that $\psi_{1,1}(s)\neq 0$
  for all $s\in I:=[a,b]$. Define
\[
\phi(s) =
\left(\frac{\psi_{1,2}(s)}{\psi_{1,1}(s)},\dots,\frac{\psi_{1,n}(s)}{\psi_{1,1}(s)}\right)\in\R^{n-1},
\quad\forall s\in I.
\]
Let $U^-=\{g\in G:a_tga_t\inv\stackrel{t\to\infty}{\longrightarrow}
e\}$. Then there exist continuous curves $\psi_-:I\to U^-$ and
$\psi_0:I\to\cen(A)$ such that
\[
\psi(s)=\psi_-(s)\psi_0(s)u(\phi(s)),\quad\forall s\in I.
\]
We observe that the curve $\{\phi(s):s\in I\}$ is contained in a
proper affine subspace of $\R^{n-1}$ if and only if the curve
$\{(\psi_{1,1}(s))_{1\leq j\leq n}):s\in I\}$ is contained in a proper
subspace of $\R^n$.  Given any $\epsilon>0$ and $f\in\Cc(\lml)$, there
exists $t_0>0$ such that for all $t\geq t_0$ and $x\in\lml$, we have
$\abs{f(a_t\psi_-(s)x)-f(a_tx)}<\epsilon$.  Therefore without loss of
generality we may replace $\psi(s)$ by $\psi_0(s)u(\phi(s))$ for all
$s\in I$ to prove the theorem.

Now we apply the argument of the proof of
Corollary~\ref{cor:mu-return} to $\psi_0(s)$ in place of $z(s)$, and
Theorem~\ref{thm:action} in place of Theorem~\ref{thm:equi}, to
complete the proof of the theorem.
\end{proof}

We shall not provide detailed proofs of other results stated in
\S\ref{sec:variations}, as they can be deduced by following the
general strategy of \cite{Dani+Mar:limit} and the method of this
article.

\end{document}